\def\@seccntformat#1{\csname the#1\endcsname.\;\;}
\title{\textbf{The Morel-Voevodsky Construction over Algebraic Stacks}}
\author{Neeraj Deshmukh\thanks{Department of Mathematics, Boyd Research and Education Center, University of Georgia, Athens, GA 30602, USA.\\ 
\texttt{Email: neeraj.deshmukh@uga.edu}}
\and
Felix Sefzig\thanks{Institut f\"{u}r Mathematik, Universit\"{a}t Z\"{u}rich, Winterthurerstrasse 190, CH-8057 Z\"{u}rich, Switzerland.\\
\texttt{Email:felix.sefzig@math.uzh.ch}}}
\date{May 2025}
\newcommand{\rom}[1]{\mathrm{#1}} % roman                                       
\newcommand{\clg}[1]{\mathcal{#1}} % caligraphic                                
\newcommand{\A}{\mathds{A}}
\newcommand{\G}{\mathbb{G}}
\newcommand{\Q}{\mathbb{Q}}
\newcommand{\Sp}{\mathrm{Sp}}
\newcommand{\Z}{\mathbb{Z}}
\newcommand{\PR}{\mathds{P}}
\newcommand{\Unit}{\mathbb{1}}
\newcommand{\cart}{\mathrm{cart}}
\newcommand{\op}{\mathrm{op}}
\newcommand{\stb}{\mathrm{st}}
\newcommand{\mot}{\mathrm{mot}}
\newcommand{\Nis}{\mathrm{Nis}}
\newcommand{\et}{\mathrm{\acute{e}t}}
\newcommand{\CAlg}{\mathrm{CAlg}}
\newcommand{\fsyn}{\mathrm{fsyn}}
\newcommand{\fr}{\mathrm{fr}}
\newcommand{\PSh}{\mathrm{PSh}}
\newcommand{\Sh}{\mathrm{Shv}}
\newcommand{\id}{\mathrm{id}}
\newcommand{\Corrs}{\mathrm{Corr}}
\newcommand{\DA}{\mathbf{DA}}
\newcommand{\DM}{\mathbf{DM}}
\newcommand{\SH}{\mathbf{SH}}
\newcommand{\Corr}{\mathbf{Corr}}
\newcommand{\Sm}{\mathbf{Sm}}
\newcommand{\Cat}{\mathrm{Cat}}
\newcommand{\Sch}{\mathbf{Sch}}
\newcommand{\Stk}{\mathbf{Stk}}
\newcommand{\Hm}{\mathbf{H}}
\newcommand{\Prc}{\mathbf{Pr}}
\newcommand{\Mod}{\mathbf{Mod}}
\newcommand{\COSY}{\mathbf{CoSy}}
\newcommand{\STCOSY}{\mathbf{StCoSy}}
\newcommand{\CC}{\mathcal{C}}
\newcommand{\XC}{\mathscr{X}}
\newcommand{\YC}{\mathscr{Y}}
\newcommand{\notes}[1]{}
\newcommand*{\triplerightarrow}[1]{\mathrel{
  \settowidth{\@tempdima}{$\scriptstyle#1$}
  \mathop{\vcenter{
    \offinterlineskip\ialign{\hbox to\dimexpr\@tempdima+1em{##}\cr
    \rightarrowfill\cr\noalign{\kern.5ex}
    \rightarrowfill\cr\noalign{\kern.5ex}
    \rightarrowfill\cr}}}\limits^{\!#1}}}
\renewcommand{\epsilon}{\ensuremath\varepsilon}
\renewcommand{\phi}{\ensuremath{\varphi}}
\DeclareMathOperator{\Hom}{Hom}
\DeclareMathOperator{\Fun}{Fun}
\DeclareMathOperator{\Map}{Map}
\DeclareMathOperator{\colim}{colim}
\DeclareMathOperator{\Spec}{Spec}
\newtheorem{theorem}{Theorem}[section]
\newtheorem{cor}[theorem]{Corollary}
\newtheorem{lemma}[theorem]{Lemma}
\newtheorem{prop}[theorem]{Proposition}
\theoremstyle{definition}
\newtheorem{defn}[theorem]{Definition}
\theoremstyle{definition}
\newtheorem{remark}[theorem]{Remark}
\theoremstyle{definition}
\newtheorem{example}[theorem]{Example}
\newtheorem{construction}[theorem]{Construction}
\newtheorem*{propo*}[theorem]{Proposition}
\begin{document}

\maketitle

\begin{abstract}
In this article, we give a construction of the (un-)stable motivic homotopy category of an algebraic stack in the spirit of Morel-Voevodsky. We prove that this new construction agrees with the stable motivic homotopy category defined by Chowdhury \cite{Chowdhury_mot_stacks}. As an application, we prove a version of Bachmann's spectral rigidity theorem \cite{bachmann_rigidity} for algebraic stacks.
Moreover, we extend the construction of the framed motivic homotopy category
in \cite{mot_loop_spaces_21} to algebraic stacks, and prove the Reconstruction Theorem of \cite[Theorem 18]{Hoy_locthm21} in this setting. Finally, we discuss an extension of the formalism of cocomplete coefficient systems of \cite{DG_six_functor} to the setting of algebraic stacks.
\end{abstract}

\tableofcontents

\section{Introduction}

Let $X$ be a scheme. The Morel-Voevodsky stable motivic homotopy category over $X$, denoted $\SH(X)$, can be described as a suitable localization and stabilization of the category of sheaves of spectra on the Nisnevich site $(\Sm_X,\rm{Nis})$ of smooth schemes over $X$. This construction describes a functor on the category of schemes taking values in stable $\infty$-categories that satisfies Nisnevich descent.
Thus, given a Nisnevich local epimorphism $p:Y\rightarrow X$, we have an equivalence
\begin{equation}\label{equation-nis-descent}
p^*\colon \SH(X)\overset{\sim}{\rightarrow} \lim_\Delta \SH(Y_\bullet).
\end{equation}
Here, $Y_{\bullet}$ is the \v{C}ech nerve of $p \colon Y\rightarrow X$.
This statement also holds for other motivic categories (for example, $\SH^{\rm{fr}}(X)$, $\DM(X,\Lambda)$, $\DA(X,\Lambda)$, etc.). A low-brow way of thinking about the above $\infty$-categorical equivalence is the following. For any cohomology theory $E^*$ on $X$, there exists a descent spectral sequence converging to $E^*(X)$ whose $E_1$-page is described by the pullback of $E^*$ to the $Y_i$'s.

This paper aims to describe $\SH$ (and other motivic categories) over an algebraic stack in the spirit of Morel-Voevodsky. More specifically, we define $\SH(\XC)$ as a suitable localization and stabilization of the category of sheaves of spectra on the smooth-Nisnevich site of the algebraic stack $\XC$ (see Construction \ref{constr_SH}). Our choice of the smooth-Nisnevich site is motivated in part by the theory of smooth-\'etale sheaves on $\XC$, and in part by the direct analogy of this approach with the case of schemes.

In \cite{khan_ravi_cohomology} and \cite{Chowdhury_mot_stacks} the authors describe two categories $\SH_{\triangleleft}(\XC)$, and $\SH_{\rom{ext}}(\XC)$. These categories are equivalent models of the stable motivic homotopy category over an algebraic stack $\XC$, as observed in \cite{Chowdhury_mot_stacks} and \cite{deshmukh2023motivichomotopytypealgebraic}.
In fact, one may view these constructions as models of the right Kan extension of $\SH$ from schemes to algebraic stacks.
This approach should be thought of as the right-hand side of Equivalence (\ref{equation-nis-descent}).
On the other hand, the construction in Section \ref{section-morel-voevodsky-construction} belongs to the left-hand side. In fact, we will show that Equivalence (\ref{equation-nis-descent}) continues to hold for an algebraic stack, and we have equivalences (see Theorem \ref{thm-sh}),
\begin{equation}\label{equation-lisse-extension-kan-extension-morel-voevodsky}
    \SH(\XC)\overset{\sim}{\rightarrow} \SH_{\triangleleft}(\XC)\overset{\sim}{\rightarrow} \SH_{\rom{ext}}(\XC).
\end{equation}
The above equivalences also hold for other motivic categories as described in the text.

In \cite{chowdhury2024}, the authors define the category $\SH_{\rm{cl}}(\XC)$ and show that it is equivalent to $\SH_{\rm{ext}}(\XC)$ (\cite[Theorem 3.2.1]{chowdhury2024}). The description of the category $\SH_{\rom{cl}}(\XC)$ is very closely related to Construction \ref{constr_SH} (see Remark \ref{remark-comparison-chowdhury-dangelo} and also \cite[Remark 3.23]{chowdhury2024}). Morally, $\SH_{\rom{cl}}(\XC)$ also belongs to the left-hand side of Equivalence (\ref{equation-nis-descent}).

In addition to its aesthetic value, Construction \ref{constr_SH} of $\SH(\XC)$ also offers some conceptual advantages over the right Kan extension $j_*\SH(\XC)$. For example, one can easily identify a family of generators for $\SH(\XC)$ (Proposition \ref{proposition-generators-SH}). Moreover, the construction of $f_\#$ for smooth (possibly non-representable) morphisms $f\colon \XC\rightarrow\YC$ of algebraic stacks becomes almost obvious.
This is rather tricky for $j_*\SH(\XC)$, and can get quite involved (see Lemma \ref{lemma-extending-elements-cosy-to-stcosy}, for example, or \cite[Remark 3.26]{chowdhury2024}).
A final example we would like to mention concerns the framed version $\SH^\fr(\XC)$ (see Section \ref{section-framed}).
The constant sheaf $\Z_\XC$ on the smooth-Nisnevich site of $\XC$ has a natural notion of framed transfers. In particular, its framed suspension $\Sigma^\infty_{T,\fr}\Z_\XC$ is an object of $\SH^\fr(\XC)$ which describes the motivic cohomology spectrum $H\Z_\XC$ (see Corollary \ref{corollary-framed-motivic-cohomology-spectrum}) in the sense of Spitzweck \cite{Spitzweck_P1_spectrum}.

The key notion required to establish the chain of equivalences in (\ref{equation-lisse-extension-kan-extension-morel-voevodsky}) above is that of a smooth-Nisnevich covering. 
 
\begin{defn}\label{defn_sm_nis}
    A morphism of algebraic stacks $p \colon X \to \XC$ is a \textit{smooth Nisnevich covering} if it is smooth and, for every field $K$, every morphism $\Spec k \to \XC$ admits a lift along $p$.
\end{defn}

\begin{example}
Let $p \colon \XC \to S$ be a smooth morphism of stacks, where $S$ is a quasi-separated algebraic space. Then
every point $y \in  |S|$ has a residue field $k(y)$ \cite[\href{https://stacks.math.columbia.edu/tag/03JV}{Tag 03JV}]{stacks-project}. Hence, $p$ is a smooth Nisnevich covering if and
only if for every $y \in  |S|$, the morphism $\XC \times_S \Spec k(y) \to \Spec k(y)$ admits a section. In particular, if $\XC$ is
also an algebraic space, then \'etale smooth Nisnevich coverings are equivalent to Nisnevich covers in the sense of
\cite[Section 3]{HR_etale_devissage}.
\end{example}

In \cite{deshmukh2023motivichomotopytypealgebraic} and \cite{Chowdhury_mot_stacks}, the second equivalence in (\ref{equation-lisse-extension-kan-extension-morel-voevodsky}) above is proved using smooth-Nisnevich coverings\footnote{Strictly speaking, \cite{Chowdhury_mot_stacks} proves this for algebraic stacks admitting \textit{Nisnevich local sections}. But by \cite[Theorem A.1]{deshmukh2023motivichomotopytypealgebraic} or \cite[Proposition 2.4]{chowdhury2024}, all algebraic stacks admit Nisnevich local sections.}. In this text, we prove the first one. The following theorem implies that every algebraic stack admits a Nisnevich local epimorphism from a scheme (see also \cite[Proposition 2.4]{chowdhury2024}). 

\begin{theorem}{(\cite[Theorem A.1]{deshmukh2023motivichomotopytypealgebraic})}\label{theorem_hall}
    Let $\XC$ be an algebraic stack. There exists a smooth Nisnevich covering $p \colon  X \to \XC$, where $X$ is a scheme. If $\XC$ is quasi-compact and quasi-separated with affine stabilizers, then we may take $X$ to be affine. Moreover, if $\XC$ is Deligne-Mumford, then we can take $p$ to be \'etale.
\end{theorem}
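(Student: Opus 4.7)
My plan is to combine the existence of a smooth presentation of $\XC$ by a scheme with an argument that upgrades smooth surjectivity to the Nisnevich lifting property on field-valued points. Existence of a smooth surjection $U_0 \to \XC$ from a scheme is automatic from the definition of an algebraic stack; for quasi-compact and quasi-separated $\XC$ with affine stabilizers, a presentation theorem of Rydh (refining earlier work of Kresch, Totaro, and Alper--Hall--Rydh) allows one to take $U_0$ affine, and for Deligne-Mumford $\XC$ the definition itself supplies an étale surjection.

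Upgrading such a presentation to the smooth Nisnevich condition is the heart of the matter, since a smooth surjection need not admit rational points on every fiber --- witness the anisotropic conic $x^2+y^2+1=0$ over $\R$. My plan is to exploit quotient stack presentations: when $\XC \simeq [X/\GL_n]$ for some scheme $X$, the projection $X \to \XC$ is smooth, and any $\Spec K \to [X/\GL_n]$ corresponds to a $\GL_n$-torsor $T$ on $\Spec K$ together with a $\GL_n$-equivariant map $T \to X$. Hilbert 90 gives $H^1_{\et}(\Spec K,\GL_n)=0$, so $T$ is necessarily trivial, and any trivialization yields the required lift $\Spec K \to X$. Rydh's theorem supplies such a presentation in the qcqs affine-stabilizer case with $X$ affine, giving the affine assertion; for a general algebraic stack, one covers $\XC$ by qcqs open substacks with affine stabilizers and glues the resulting local smooth-Nisnevich covers into a (possibly non-quasi-compact) scheme $X \to \XC$.

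For the Deligne-Mumford case, Hilbert 90 fails for finite étale group schemes $\Gamma$, so étale-local presentations $[Y/\Gamma]$ do not automatically yield Nisnevich covers. One bypasses this by enlarging $Y$ with versal forms of the torsor --- with $\G_m \to B\mu_n$ sending $s \mapsto \Spec K[t]/(t^n-s)$ as the prototypical example --- arranged so that every twist class arising from a residue field of $\XC$ is realized by a $K$-point of the enlargement, while étaleness is preserved.

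The principal obstacle is the presentation theorem of Rydh itself, which is the nontrivial geometric input; once the quotient description $\XC \simeq [X/\GL_n]$ is in hand, the Nisnevich lifting follows cleanly from Hilbert 90. The Deligne-Mumford refinement is the second delicate point, since the absence of Hilbert 90 for finite groups forces the construction of twisted étale covers parametrized by residue-field cohomology classes, and one must verify that the resulting (potentially large) disjoint union can be organized into a single étale cover.
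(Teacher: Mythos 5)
The Hilbert~90 observation is the correct key input for quotients by $\GL_n$: if $\XC \simeq [U/\GL_n]$, then $U \to \XC$ is indeed a smooth Nisnevich covering, since every $\GL_n$-torsor over a field is trivial. However, your reduction of a general qcqs stack with affine stabilizers to this situation via ``Rydh's theorem'' does not exist in the generality you need: the presentation $\XC \simeq [U/\GL_n]$ with $U$ (quasi-)affine is equivalent, by Totaro--Gross, to $\XC$ having the \emph{resolution property}, which is not a hypothesis of the theorem and is known to fail for some qcqs stacks with affine stabilizers. The correct statement is a \emph{Nisnevich-local} structure theorem (such stacks are Nisnevich-locally $\GL_n$-quotients), but proving that is essentially the content of the theorem you are trying to prove, so invoking it would be circular.

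The more serious gap is the final reduction. You claim ``one covers $\XC$ by qcqs open substacks with affine stabilizers and glues.'' This fails: having affine stabilizers is a pointwise condition that is not generically true and is not improved by passing to open substacks. For instance, $BA$ with $A$ an abelian variety has a single point with non-affine stabilizer $A$, and no open substack has affine stabilizers. Thus the general case does not reduce to the affine-stabilizer case at all, and Hilbert~90 for $\GL_n$ gives you nothing here --- $H^1(K,A)$ is typically nonzero, so there is no analogous trivialization trick. The actual proof (Hall's appendix, Theorem~A.1 of the cited paper) does not pass through a global $\GL_n$-presentation; it proceeds by a dévissage/transfinite-induction argument on $|\XC|$ using residual gerbes and Nisnevich neighborhoods, which is what handles non-affine stabilizers. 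Your sketch for Deligne--Mumford stacks (enlarging by versal torsors) points in a reasonable direction, but as written it identifies the difficulty without resolving it: for a general finite étale group $\Gamma$ there is no single scheme of finite type realizing all $\Gamma$-torsors over all residue fields, and showing that an infinite disjoint union can be arranged into an \'etale cover while remaining a scheme requires an explicit transfinite construction that you have not supplied.
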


The fundamental idea used in all the descent arguments in this paper is the following.
A smooth-Nisnevich cover of an algebraic stack $\XC$ gives rise to a Nisnevich local resolution of $\XC$ by a simplicial scheme.

\begin{prop}\label{prop_cechnerve_niscover}
    Let $\XC$ be an algebraic stack and let $p \colon X \to \XC$ be a smooth Nisnevich cover by a scheme. Then the associated \v{C}ech nerve gives rise to a Nisnevich equivalence $|X_\bullet| \to \XC$ in $\PSh(\Sm_\XC)$.
\end{prop}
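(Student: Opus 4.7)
The plan is to show that $|X_\bullet|\to\XC$ is a Nisnevich equivalence in $\PSh(\Sm_\XC)$. Since any $T\to\XC$ in $\Sm_\XC$ admits a unique morphism to $\XC$ over $\XC$, the presheaf represented by $\XC$ is the terminal object, so the content is that $|X_\bullet|$ becomes contractible after Nisnevich sheafification. For each $T\in\Sm_\XC$, using that $X_n = X\times_\XC\cdots\times_\XC X$ with $n+1$ factors, one has
\[
X_n(T)=\Hom_{/\XC}(T,X)^{n+1},
\]
so the simplicial set $X_\bullet(T)$ is the \v{C}ech nerve of the map of sets $\Hom_{/\XC}(T,X)\to\ast$; its geometric realization is contractible as soon as the map $T\to\XC$ admits a lift along $p$. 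Since geometric realization of simplicial presheaves is computed pointwise, it suffices to show that the structure morphism of every $T\in\Sm_\XC$ lifts through $p$ Nisnevich-locally on $T$.

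Fixing a point $t\in T$, I can replace $T$ by its Henselization at $t$ and reduce to the case $T=\Spec R$ with $R$ Henselian local, provided the resulting section over $\Spec\OC^h_{T,t}$ descends to some étale neighborhood of $t$; the latter is a standard limit/approximation argument, since $\Spec\OC^h_{T,t}$ is a cofiltered limit of Nisnevich neighborhoods. The pullback $Y:=X\times_\XC T$ is then a smooth algebraic space over $T=\Spec R$, and by the smooth Nisnevich cover hypothesis applied to the composite $\Spec k(t)\to T\to\XC$, there is a point $y_0\in Y$ above the closed point of $T$.

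To finish, I apply Theorem~\ref{theorem_hall} to $Y$ to produce a smooth Nisnevich cover $Y'\to Y$ with $Y'$ a scheme, arranged so that $y_0$ lies in its image. The composition $Y'\to Y\to T$ is then a smooth morphism of schemes with a rational point above the closed point of $T$; the usual Hensel lemma for smooth morphisms of schemes over Henselian local bases supplies a section $T\to Y'$, whose composite with $Y'\to Y\to X$ is the required lift of $T\to\XC$.

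The main obstacle I anticipate is that $Y$ is only an algebraic space, so the classical Hensel lemma does not apply directly; the workaround is to interpose the auxiliary scheme-level cover $Y'\to Y$ guaranteed by Theorem~\ref{theorem_hall}. A secondary point, namely descending the section from the Henselian local ring to an actual Nisnevich neighborhood in $\Sm_\XC$, is routine by the standard cofiltered-limit description of $\Spec\OC^h_{T,t}$.
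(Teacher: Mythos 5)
Your proof is correct and follows the same strategy that the referenced source [Deshmukh, Prop.~3.1] uses. The key steps — identifying $\XC$ with the terminal presheaf on $\Sm_\XC$ so that $|X_\bullet|(T)$ is the realization of the \v{C}ech nerve of $\Hom_{/\XC}(T,X)\to *$, reducing Nisnevich-local contractibility to Nisnevich-local lifting of the structure morphism, and obtaining that lifting by passing to the Henselian local ring at $t$, invoking Theorem~\ref{theorem_hall} to replace the algebraic space $Y = X\times_\XC T$ by a scheme without changing the residue field at the distinguished special-fibre point, and then applying Hensel's lemma for smooth morphisms of schemes, followed by a finite-presentation approximation to descend to an actual Nisnevich neighbourhood — are exactly the expected ones, and each step is carried out correctly.
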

\begin{proof}
    \cite[Proposition 3.1]{deshmukh2023motivichomotopytypealgebraic}.
\end{proof}

\subsection{Outline}

The text of this paper is organized as follows. In Section \ref{section-morel-voevodsky-construction}, we describe the construction of $\SH(\XC)$ and $\SH_{\et}(\XC)$ for an algebraic stack and prove that they satisfy the appropriate notion of descent (Theorem \ref{thm-sh}).
The \'{e}tale story leads us into a discussion about hypercomplete \'{e}tale sheaves.
In particular, we discuss the rigidity theorem of \cite{bachmann_rigidity} for stacks (Theorem \ref{theorem-rigidity}).
We also prove a version of Ayoub's equivalence between \'{e}tale motives with and without transfers (Theorem \ref{thm-transfer-with-or-without}) in the context of regular algebraic stacks over a field.

In Section \ref{section-framed}, we describe a model for the category of framed motivic spectra $\SH^{\fr}$ over $\XC$ and prove that it satisfies Nisnevich descent. Using the results of \cite{Hoy_locthm21}, this gives an equivalence between the framed and unframed versions of the stable motivic homotopy categories (Corollary \ref{corollary-framed-SH-limit}).
These observations result in an explicit description of the motivic cohomology spectrum over $\XC$ as the framed suspension of the (framed) constant sheaf $\Z$ (Corollary \ref{corollary-framed-motivic-cohomology-spectrum}).

In Section \ref{section-coarse-space-morphism}, we analyze the coarse space morphism of a tame Deligne-Mumford stack and show that it induces a fully faithful functor on the category of motives with $\Q$-coefficients (Theorem \ref{theorem-coarse-space-map-fully faithful}).

In Section \ref{section-coefficient-systems}, we discuss the theory of cocomplete coefficient systems as in \cite{DG_six_functor} in the context of algebraic stacks. We show that $\SH$ continues to be the universal six functor formalism on $\Stk_S$ (Corollary \ref{corollary-SH-initial}). We conclude by observing that Ayoub's \'{e}tale and $I$-adic realization functors extended to algebraic stacks continue to commute with the six operations.

\subsection{Acknowledgements}
We want to thank Joseph Ayoub for helpful discussions and suggestions at various stages of this work. We would like to thank Piotr Achinger for helpful discussions on the subject of this paper, in particular, about the proof of Proposition \ref{prop_nis_sheaves}. We would also like to thank Chirantan Chowdhury and Alessandro D'Angelo for discussions about motives and stacks and their paper \cite{chowdhury2024}. Many statements in Section \ref{section-coefficient-systems} have been inspired by these discussions. The first author also thanks Fr\'{e}d\'{e}ric D\'{e}glise for a helpful email correspondence about correspondences, transfers, and the contents of \cite{cisinski_deglise_mm}.

The first author was supported by the project
KAPIBARA, funded by the European Research Council (ERC) under the European Union's Horizon
2020 research and innovation programme (grant agreement No 802787). 

\subsection{Notation and conventions}
Throughout this article, we will work exclusively within the framework of infinity categories.
In particular, we adopt the following conventions and notations.
We write $\clg{S}$ for the $\infty$-category of spaces and $\Sp$ for the $\infty$-category of $S^1$-spectra.
For an $\infty$-category $C$, we write $\PSh(C) := \mathrm{Fun}(C,\clg{S})$ for the $\infty$-category of presheaves of spaces on $C$ and $\PSh(C)_*$ for the category of pointed presheaves of spaces.
If $(C,\tau)$ is a site, we write $\Sh_\tau(C)$ for the $\infty$-category of sheaves of spaces on $(C,\tau)$. If $\clg{T}$ is an $\infty$-topos we write $\clg{T}^\wedge$ for the hypercompletion of $\clg{T}$. A morphism of sites $f \colon (\clg{C},\tau) \to (\clg{D},\sigma)$ is given by a continuous functor $f \colon \clg{D} \to \clg{C}$ such that the corresponding pullback functor $f^* \colon \Sh_\sigma(\clg{D}) \to \Sh_\tau(\clg{C})$ is exact.

For a scheme $X$, we write $X_\et$ for the small \'etale site of $X$.
We write $\SH(X)$ for the stable motivic homotopy category and $\otimes$ for the symmetric monoidal structure on $\Hm(X)_*$ and $\SH(X)$.

\section{The Morel-Voevodsky construction over algebraic stacks}\label{section-morel-voevodsky-construction}
We are going to present an alternative definition, in the spirit of Morel-Voevodsky, of the stable motivic homotopy category of an algebraic stack. 
In addition, we prove that this new definition agrees with the one given in \cite{Chowdhury_mot_stacks}, which was originally given for Nisnevich-local stacks; however, by Theorem \ref{theorem_hall}, all algebraic stacks satisfy this assumption.

The stable motivic homotopy category $\SH(X)$ of a scheme $X$ is defined as the 
$\PR^1$-stabilization of $\mathbf{H}(X)_*$, the category of pointed $\mathbb{A}^1$-invariant Nisnevich sheaves of spaces on $\Sm_X$. Moreover, 
the assignment $X \mapsto \SH(X)$, $f \mapsto f^*$ defines a functor of presentable stable $\infty$-categories
\[
\SH \colon \Sch^\op \to \mathrm{Pr}^L_{\stb}.
\]
In fact, $\SH(X)$ has the structure of a symmetric monoidal presentable stable $\infty$-category, and the pullback functors are symmetric monoidal; in other words, $\SH$ defines a functor
\[
\SH \colon \Sch^\op \to \mathrm{\CAlg}(\mathrm{Pr}^L).
\]
Chowdhury defined $\SH$ for Nisnevich local stacks as the right Kan extension $j_* \SH$ of 
$\SH$ along the inclusion $j\colon \Sch \to \Stk$ of schemes into Nisnevich-local Stacks, which by Theorem \ref{theorem_hall} is the category of all algebraic stacks. 
We write $j_*\SH(\XC)$ for the right Kan extension and $\SH(\XC)$ the Morel-Voevodsky version, see construction below.

We begin by defining the smooth Nisnevich and smooth \'etale site of an algebraic stack. For this, we view the $\infty$-category of stacks as a subcategory of $\PSh(\Sch,\clg{S})$. 

\begin{defn}
    Let $\XC$ be an algebraic stack. 
    \begin{enumerate}
    \item Denote by $\Stk$ the $\infty$-category of stacks. The Nisnevich, respectively the \'etale topology, on $\Stk$ is the topology generated by families of maps $\{U_i \to \XC\}$, 
    such that $\bigsqcup_i U_i \to \XC$ is a Nisnevich, respectively an \'etale, local epimorphism of presheaves of spaces.
    \item Denote by $\mathbf{SmRep}_\XC$ the 1-category of stacks that are smooth and representable over $\XC$. Moreover, denote by $\Sm_\XC$ the category of smooth schemes over $\XC$. 
    \item The Nisnevich, respectively the \'etale topology, on $\mathbf{SmRep}_\XC$ or on $\Sm_\XC$ is the restriction of the Nisnevich topology, respectively of the \'etale topology, along the inclusion $\mathbf{SmRep}_\XC \to \Stk$ or $\Sm_\XC \to \Sch$.
    \end{enumerate}
\end{defn}

\begin{remark}
A family of maps of schemes is a Nisnevich cover in the usual sense if and only if it is a Nisnevich cover of Stacks. Moreover, a smooth Nisnevich covering $X \to \XC$ as in Definition \ref{defn_sm_nis} is a Nisnevich cover in the above sense.
\end{remark}

\begin{construction}[Morel-Voevodsky]\label{constr_SH}
Let $\XC$ be an algebraic stack over $S$.
We define $\mathbf{H}(\XC)$ as the localization of $\PSh(\mathbf{SmRep}_\XC)$ at the class of Nisnevich locally $\infty$-connective morphisms and the class of maps
\[
\left\{\mathrm{pr}_1 \colon U \times_\XC \A^1_\XC \to U \mid U \in \mathbf{SmRep}_\XC \right\}.
\]
The corresponding localization functor is called motivic localization
\[
L_\mot \colon \PSh(\mathbf{SmRep}_\XC) \to \Hm(\XC),
\]
and admits a fully faithful right adjoint. Consequently, we may view $\Hm(\XC)$ as the full subcategory of $\PSh(\mathbf{SmRep}_\XC)$ of $\A^1$-invariant Nisnevich sheaves.
Finally, we define $\SH(\XC)$ as the
$(\PR^1_\XC,\infty)$-stabilization, with respect to the usual wedge product, $\otimes$, of the pointed category $\Hm (\XC)_*$, see for example \cite[Section 2]{robalo15} for more details on the stabilization process. 
As for schemes, this construction defines a functor
\begin{equation*}
    \SH \colon \Stk^\op \to \mathrm{\CAlg}(\mathrm{Pr}^L),
\end{equation*}
which, by definition, agrees with $\SH$ on schemes over $S$. In particular, there
exists a natural transformation $u\colon  \SH \to j_* \SH$.
\end{construction}

\begin{construction}[\'Etale version]
The stable \'etale motivic homotopy category is constructed exactly as its Nisnevich counterpart, except that we invert all \'etale locally $\infty$-connective morphisms instead of the Nisnevich local equivalences. We denote the resulting categories by $\Hm_\et(\XC)$ and $\SH_\et(\XC)$. In particular, the \'etale motivic homotopy category is obtained as a localization of the hypercomplete 
\'etale topos on $\mathbf{SmRep}_\XC$
\[
L_\mot \colon \Sh_\et(\mathbf{SmRep}_\XC)^\wedge \to \Hm_\et(\XC).
\]
\end{construction}

\begin{remark}
    We use the smooth representable site of $\XC$ in the above constructions to ensure that $(\SH,f^*)$ is clearly functorial for all morphisms of stacks. On the other hand, as the next proposition shows, we can equivalently define $\SH(\XC)$ using the smooth Nisnevich site. Moreover, if $f \colon \XC \to \YC$ is a representable morphism, then the functor
    \[
    f^* \colon \SH_\tau(\YC) \to \SH_\tau(\XC)
    \]
    is induced by the functor $\Sm_\YC \to \Sm_\XC$ given by $U \mapsto U \times_\YC \XC$. 
\end{remark}

\begin{prop}
    Let $\XC$ be an algebraic stack. The inclusion functor $u \colon \Sm_\XC \to \mathbf{SmRep}_\XC$ induces an equivalence of topoi
    \[
    \Sh_\tau(\Sm_\XC) \xrightarrow{\sim} \Sh_\tau(\mathbf{SmRep}_\XC), 
    \]
    with $\tau \in \{\Nis,\et\}$.
    This equivalence identifies $\Hm(\XC)$ with the full subcategory of $\A^1$-invariant sheaves on $\Sm_\XC$.
\end{prop}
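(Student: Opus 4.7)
The plan is to deduce the equivalence of topoi from the comparison lemma for sites, and then to transfer $\A^1$-invariance across the equivalence using \v{C}ech descent together with Theorem \ref{theorem_hall}.

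First, I would check the two hypotheses of the comparison lemma for the inclusion $u \colon \Sm_\XC \hookrightarrow \mathbf{SmRep}_\XC$. Continuity is immediate from the definition of the topologies on $\mathbf{SmRep}_\XC$ by restriction: a $\tau$-cover in $\Sm_\XC$ is by construction a $\tau$-cover in $\mathbf{SmRep}_\XC$. For density, fix $U \in \mathbf{SmRep}_\XC$. Since $U$ is smooth and representable over the algebraic stack $\XC$, it is itself an algebraic stack, so Theorem \ref{theorem_hall} produces a smooth Nisnevich cover $V \to U$ with $V$ a scheme; the composite $V \to \XC$ is then smooth, so $V \in \Sm_\XC$. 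This single-morphism family is a $\Nis$-cover of $U$, and \emph{a fortiori} an $\et$-cover. The comparison lemma then yields the equivalence $\Sh_\tau(\Sm_\XC) \xrightarrow{\sim} \Sh_\tau(\mathbf{SmRep}_\XC)$ for both $\tau \in \{\Nis,\et\}$.

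For the identification of $\Hm(\XC)$, I would verify that under this equivalence a sheaf $F$ on $\mathbf{SmRep}_\XC$ is $\A^1$-invariant and hypercomplete if and only if $u^* F$ is. One direction is automatic by restriction. For the converse, given $U \in \mathbf{SmRep}_\XC$, I would iterate Theorem \ref{theorem_hall} to produce a Nisnevich hypercover $V_\bullet \to U$ whose terms $V_n$ all lie in $\Sm_\XC$; base change along $\A^1_\XC \to \XC$ provides an analogous hypercover of $U \times_\XC \A^1_\XC$, in view of Proposition \ref{prop_cechnerve_niscover}. Hypercomplete Nisnevich descent then identifies $F(U) \simeq \lim_\Delta F(V_\bullet)$ and analogously for $U \times_\XC \A^1_\XC$; since $u^* F$ is $\A^1$-invariant term by term on $V_\bullet$, the two limits agree, establishing $\A^1$-invariance of $F$ on $U$. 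The same hypercover computation transfers the hypercompleteness condition.

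The main technical obstacle is that the naive \v{C}ech nerve of a scheme cover $V \to U$ need not have scheme-valued terms beyond level zero, since $V \times_U V$ is in general only an algebraic space. One must refine iteratively by invoking Theorem \ref{theorem_hall} at each level to stay inside $\Sm_\XC$, assembling the refinements into a genuine hypercover; this bookkeeping is routine but is really the only non-formal step in the argument, everything else being a direct application of standard site-theoretic machinery.
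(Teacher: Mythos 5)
Your argument for the equivalence of topoi is essentially the paper's: both appeal to a comparison lemma for sites and deduce density from Theorem \ref{theorem_hall}. The one place you are slightly too quick is the precise form of the comparison lemma being invoked. The version that only requires continuity and density assumes that the smaller site carries the \emph{induced} topology from the larger one; but the topology on $\Sm_\XC$ is defined by restriction from $\Sch$, not from $\mathbf{SmRep}_\XC$, so one must verify that these two topologies agree. The paper does this explicitly by invoking \cite[Tag 039Z]{stacks-project} and checking cocontinuity: given $U \in \Sm_\XC$ and a Nisnevich cover $\{V_i \to U\}$ in $\mathbf{SmRep}_\XC$, Theorem \ref{theorem_hall} produces scheme covers $U_i \to V_i$, and the composites $\{U_i \to U\}$ form a Nisnevich cover in $\Sm_\XC$ refining the given one. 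This is the same argument as your density check, just applied fibrewise, so it is not a real gap, but it should be stated rather than subsumed in the phrase ``comparison lemma.'' The paper also reduces to $1$-topoi via \cite[Lemma 6.4.5.6]{HTT09} before applying Tag 039Z, whereas you apply the lemma directly at the $\infty$-level, which is fine but unremarked.

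For the second claim (identification of $\Hm(\XC)$), the paper offers no proof, treating it as immediate once the topoi are identified, while you supply an argument via hypercover descent. Your argument is essentially correct, and you are right to flag that \v{C}ech nerves of scheme covers of objects of $\mathbf{SmRep}_\XC$ need not remain in $\Sm_\XC$, so one has to refine term by term into a genuine hypercover. However, your invocation of Proposition \ref{prop_cechnerve_niscover} for the base-changed hypercover of $U \times_\XC \A^1_\XC$ is misplaced: that proposition concerns the Nisnevich equivalence property of \v{C}ech nerves, whereas what you actually need is simply that base change preserves hypercovers, which is elementary. A shorter route to the $\A^1$-identification, more in the spirit of what the paper leaves implicit, is to observe that under the equivalence of topoi the two localizing classes of $\A^1$-projections (one for $U$ in $\mathbf{SmRep}_\XC$, one for $V$ in $\Sm_\XC$) generate the same strongly saturated class, because every $\A^1$-projection over $U \in \mathbf{SmRep}_\XC$ is covered, Nisnevich-locally, by $\A^1$-projections over schemes.
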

\begin{proof}
Since we are dealing with sheaves on $1$-categories, by \cite[Lemma 6.4.5.6]{HTT09}, it suffices to check that the ordinary sheaf topoi agree. For this, we check the condition from \cite[Tag 039Z]{stacks-project}. Since $u$ is fully faithful and clearly continuous, it suffices to check that $u$ is cocontinuous and every $V \in \mathbf{SmRep}_\XC$ admits a Nisnevich covering by smooth schemes over $\XC$. The second claim follows directly from Theorem \ref{theorem_hall}. For the first claim, let $U \in \Sm_\XC$ and let $\clg{V} := \{V_i \to u(U)\}$ be a Nisnevich cover in $\mathbf{SmRep}_\XC$.
For all $i$, let $U_i \to V_i$ be a smooth Nisnevich cover by a scheme, which exists again by Theorem \ref{theorem_hall}, then $\{U_i \to U\}$ is a Nisnevich covering in $\Sm_\XC$ and $\{u(U_i) \to u(U)\}$ refines $\clg{V}$.
\end{proof}

The main theorem of this section is that the Morel-Voevodsky construction of $\SH$ agrees with the right Kan extension of $\SH$ along the inclusion $j \colon \Sch \to \Stk$.

\begin{theorem}\label{thm-sh}
Let $\XC$ be an algebraic stack. The natural functors
\[
u \colon  \SH(\XC) \to  j_* \SH(\XC)
\]
and
\[
u \colon  \SH_\et(\XC) \to  j_* \SH_\et(\XC)
\]
are equivalences of symmetric monoidal presentable stable $\infty$-categories.
\end{theorem}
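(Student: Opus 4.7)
The plan is to select a smooth Nisnevich cover $p \colon X \to \XC$ by a scheme (which exists by Theorem \ref{theorem_hall}) and, writing $X_\bullet$ for its \v{C}ech nerve, to establish the descent equivalence
\[
\SH(\XC) \xrightarrow{\sim} \lim_\Delta \SH(X_\bullet).
\]
Since the work of \cite{Chowdhury_mot_stacks} and \cite{deshmukh2023motivichomotopytypealgebraic} already identifies the right-hand side with $j_*\SH(\XC)$ (this is precisely the content of the previously established equivalence $\SH_\triangleleft(\XC) \simeq \SH_{\mathrm{ext}}(\XC)$), this immediately upgrades the natural transformation $u$ to an equivalence and factors it correctly. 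The \'etale case will follow by the same strategy with the topology changed and the appropriate hypercompletion inserted.

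The central input is Proposition \ref{prop_cechnerve_niscover}, which says that $|X_\bullet| \to \XC$ is a Nisnevich equivalence in $\PSh(\Sm_\XC)$. From this I would first deduce an equivalence of Nisnevich topoi
\[
\Sh_\Nis(\mathbf{SmRep}_\XC) \xrightarrow{\sim} \lim_\Delta \Sh_\Nis(\mathbf{SmRep}_{X_\bullet})
\]
by applying Nisnevich hyperdescent to the \v{C}ech hypercover produced by $p$, and invoking the identification $\Sh_\tau(\Sm) \simeq \Sh_\tau(\mathbf{SmRep})$ of the previous proposition on each simplicial degree. I would then propagate this descent through the further localizations defining $\SH$: restricting to $\A^1$-invariant hypercomplete sheaves is compatible with the limit, because pullback along the smooth face and degeneracy maps of $X_\bullet$ preserves both $\A^1$-locality and hypercompletion, so a limit of $\A^1$-local subcategories embeds compatibly into the descent limit; pointing is automatic; and $\PR^1$-stabilization, being a functorial operation in $\CAlg(\mathrm{Pr}^L)$, commutes with small limits. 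Assembling these three steps yields the claimed descent equivalence, and hence the theorem for $\SH$.

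The main obstacle I anticipate is the careful bookkeeping showing that motivic localization genuinely commutes with the simplicial limit. Concretely, one must verify that the generators of the Bousfield localization, namely the $\A^1$-projections $U \times_\XC \A^1_\XC \to U$ for $U \in \mathbf{SmRep}_\XC$, pull back under each face and degeneracy of $X_\bullet$ to the analogous generators on the $X_n$, so that a compatible system of $\A^1$-local objects in $\lim_\Delta \Hm(X_n)_*$ is itself $\A^1$-local. This is a formal consequence of the smoothness and representability of the structural maps, but the matching must be made precise on both sides of the limit. For the \'etale version, hypercompletion is itself a left Bousfield localization that commutes with small limits of topoi, so no essentially new ingredient is needed beyond replacing Nisnevich by \'etale and hyperdescent by hyperdescent in the enlarged \'etale topology throughout.
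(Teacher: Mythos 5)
Your proposal follows the same overall route as the paper: set up the Čech nerve, prove topos-level descent, propagate through $\A^1$-localization, and then stabilize. The first two stages you sketch correctly (the paper does indeed verify that both $p^*$ and $p_*$ preserve $\A^1$-invariance, the latter via the explicit computation $\lim_\Delta F_n(U\times_\XC \A^1_\XC \times_\XC X_n) = \lim_\Delta F_n(U\times_\XC X_n)$), and the ``obstacle'' you flag about matching generators on both sides of the limit is real but handled as you anticipate.

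The genuine gap is in the final step: the claim that ``$\PR^1$-stabilization, being a functorial operation in $\CAlg(\mathrm{Pr}^L)$, commutes with small limits'' is false in general. Stabilization at $T = \PR^1$ is a left Bousfield localization of $\CAlg(\mathrm{Pr}^L)_*$ onto the $T$-stable subcategory, hence a colimit-preserving left adjoint, and there is no reason for it to commute with limits of symmetric monoidal categories. Concretely, $\SH(X)$ is computed as a \emph{colimit} in $\mathrm{Pr}^L$ of the tower $\Hm(X)_* \xrightarrow{\Sigma_T} \Hm(X)_* \xrightarrow{\Sigma_T} \cdots$, and the interchange of this colimit with $\lim_\Delta$ is exactly what must be argued. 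The paper's proof handles this by passing to the opposite category $\mathrm{Pr}^R \simeq (\mathrm{Pr}^L)^{\mathrm{op}}$ and rewriting $\SH(X)$ as a \emph{limit} in $\Cat_\infty$ of the $\Omega_{\PR^1}$-tower; the resulting limit can then be interchanged with $\lim_\Delta$. But for this to produce a cosimplicial diagram of $\Omega_{\PR^1}$-towers in the first place, one needs the exchange equivalence $\Omega_{\PR^1} \circ f^* \simeq f^* \circ \Omega_{\PR^1}$, which holds only for smooth $f$ (via the smooth projection formula) and is used crucially: it is precisely because the face and degeneracy maps of $X_\bullet$ are smooth that this step goes through. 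Your sketch omits this passage entirely and replaces it with a formal commutativity that does not hold.

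A smaller but analogous imprecision occurs in the \'etale case: hypercompletion is again a localization, so ``commutes with small limits of topoi'' is not automatic. The paper devotes Lemma \ref{lem_cart_sheaves_etale} to showing that hypercompleteness is stable under $p^*$ and $p_*$ in this specific situation (using $p_{n,*}$ preserving hypercompleteness and an argument via slice topoi), and a complete argument needs to address this.
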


\begin{remark}\label{remark-comparison-chowdhury-dangelo}
    Chowdhury and D'Angelo independently gave an equivalent construction of $\SH(\XC)$ and proved a version of Theorem \ref{thm-sh}, \cite[Theorem 3.21]{chowdhury2024}.
    Concretely, they work with the category of all Artin Stacks endowed with the so-called NL-topology. A cover for the NL-topology is a morphism $\XC \to \YC$ that admits sections after pullback along a smooth Nisnevich cover of $\YC$ by a scheme. Finally, that the two constructions agree is an application of Theorem \ref{theorem_hall} and the fact that for smooth representable morphisms, the notion of NL-cover and Nisnevich cover agree.
\end{remark}

Before we discuss the proof of the main theorem, let us collect some standard facts about the categories $\Hm(\XC)$ and $\SH(\XC)$. The analogous results also hold in the \'etale setting. In particular, it follows that $\SH(\XC)$ is generated by suspension spectra of representable sheaves, and that $f^*$ admits a left adjoint $f_\#$ even for not necessarily representable, smooth morphisms $f$.

\begin{lemma}\label{lem_SH_descriptions}
    Let $\XC$ be an algebraic stack. The natural map $\A^1_\XC \to *$ is a homotopy equivalence in $\Hm(\XC)$. Moreover, the natural maps 
    \begin{equation*}
        (\A^n/(\A^n \smallsetminus 0),1) \to \Sigma^n (\G_m,1)^{\otimes n}, \quad (\G_m,1)\otimes S^1 \to (\PR^1,\infty)
    \end{equation*}
    are homotopy equivalences in $\Hm(\XC)_*$. Consequently, there are natural equivalences of stable $\infty$-categories
    \begin{equation*}
        \SH(\XC) \simeq \Hm(\XC)_*[(\PR^1_\XC)^{-1}] \simeq \Hm(\XC)_*[(\G_m \otimes S^1)^{-1}] \simeq \SH^{S^1}(\XC)[\G_m^{-1}],
    \end{equation*}
    where $\SH^{S^1}(\XC) := \Hm(\XC)_*[(S^1)^{-1}]$.
\end{lemma}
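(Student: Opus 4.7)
The plan is to reduce everything to the standard Morel--Voevodsky equivalences over schemes and to observe that the proofs transport verbatim to the smooth representable site of $\XC$. First, by construction $\Hm(\XC)$ is the localization of $\PSh(\mathbf{SmRep}_\XC)$ that inverts, for every $U \in \mathbf{SmRep}_\XC$, the projection $U \times_\XC \A^1_\XC \to U$. Applying this with $U = \XC$ yields at once that $\A^1_\XC \to *$ is an equivalence in $\Hm(\XC)$.

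Next I would record the standard motivic squares on $\XC$. The key observation is that any Nisnevich elementary distinguished square in $\Sm_{\Spec \Z}$ becomes, after base change to $\XC$, a Nisnevich distinguished square in $\Sm_\XC \subset \mathbf{SmRep}_\XC$, and is therefore cocartesian in $\Hm(\XC)$ by the definition of $L_\mot$. Applied to the Zariski cover $\PR^1 = \A^1 \cup_{\G_m} \A^1$ coming from the charts at $0$ and $\infty$, this gives a pushout $\A^1_\XC \sqcup_{(\G_m)_\XC} \A^1_\XC \simeq \PR^1_\XC$ in $\Hm(\XC)_*$; combined with the contractibility of $\A^1_\XC$ established above, it produces the equivalence $(\G_m,1) \otimes S^1 \simeq \Sigma \G_m \simeq (\PR^1, \infty)$.

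For the identification $(\A^n/(\A^n \smallsetminus 0),1) \simeq \Sigma^n (\G_m,1)^{\otimes n}$, I would argue inductively on $n$ using the Zariski cover of $\A^n \smallsetminus 0$ by the standard opens $\{x_i \neq 0\} \simeq \A^{n-1} \times \G_m$. Each step is cocartesian in $\Hm(\XC)$ by the same base-change principle, and combined with $\A^1_\XC \simeq *$ and the inductive hypothesis one recovers the claim exactly as in Morel--Voevodsky's original argument; alternatively, one can deduce it from $(\PR^1,\infty) \simeq (\G_m,1) \otimes S^1$ together with the smash decomposition $(\A^n, \A^n \smallsetminus 0) \simeq (\A^1, \A^1 \smallsetminus 0)^{\wedge n}$.

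The final chain of stable equivalences then follows formally: by the universal property of $(\PR^1_\XC, \infty)$-stabilization applied to $(\PR^1, \infty) \simeq (\G_m, 1) \otimes S^1$, inverting $\PR^1_\XC$ is equivalent to inverting $\G_m \otimes S^1$; and since $S^1$ becomes invertible in any stable $\infty$-category, this decomposes as first inverting $S^1$, producing $\SH^{S^1}(\XC)$, and then inverting $\G_m$. The only nontrivial input is that Nisnevich distinguished squares on schemes remain cocartesian in $\Hm(\XC)$ after base change, but this is built into the definition of $\Hm(\XC)$ as a localization at Nisnevich-locally $\infty$-connective morphisms, so in the end the whole argument is essentially formal and I do not expect any real obstacles beyond bookkeeping.
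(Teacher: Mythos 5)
Your argument is correct, and it is the natural transport of the classical Morel--Voevodsky proof to the smooth representable site of $\XC$. Since the paper omits a proof of this lemma (it is labelled a ``standard fact''), the closest basis for comparison is the strategy the paper uses for the two lemmas immediately after it (the Thom space comparison and Lemma~\ref{lem_cyclic_perm}): there the authors pick a smooth Nisnevich cover $X \to \XC$, use the Nisnevich equivalence $|X_\bullet| \to \XC$ of Proposition~\ref{prop_cechnerve_niscover}, and thereby reduce to the known scheme case on each $X_n$. You instead argue internally in $\Hm(\XC)$, using that the localization already inverts $\A^1_\XC \to *$ (take $U = \XC$ in the defining class) and that the standard Zariski pushout squares for $\PR^1$ and $\A^n \smallsetminus 0$ base-change to cocartesian squares of representable stacks over $\XC$. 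This is a genuinely different route: your version is more self-contained and does not invoke Theorem~\ref{theorem_hall} or any descent machinery, whereas the paper's strategy uniformizes all such verifications into one reduction-to-schemes template. Both are valid; yours is arguably the more economical one for this particular lemma, because the statements are purely formal consequences of how $\Hm(\XC)$ is constructed.

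One small imprecision worth fixing: you say the base-changed distinguished squares land in $\Sm_\XC \subset \mathbf{SmRep}_\XC$. When $\XC$ is genuinely a stack, $\A^1_\XC$, $\G_{m,\XC}$, $\PR^1_\XC$ are representable stacks, not schemes, so these squares live in $\mathbf{SmRep}_\XC$ rather than $\Sm_\XC$. Since $\Hm(\XC)$ is defined as a localization of $\PSh(\mathbf{SmRep}_\XC)$ and the two sites give equivalent sheaf topoi (as the paper notes), this does not affect the argument, but the wording should say $\mathbf{SmRep}_\XC$. Also, the final decomposition of the stabilization $\Hm(\XC)_*[(\G_m\otimes S^1)^{-1}] \simeq \SH^{S^1}(\XC)[\G_m^{-1}]$ is best justified purely via the universal property of inverting an object in $\CAlg(\Pr^L)$ (which you invoke), while the identification with a sequential colimit in $\Pr^L$ -- which the paper does use later in the proof of Theorem~\ref{thm-sh} -- is exactly where the cyclic permutation statement of Lemma~\ref{lem_cyclic_perm} enters; your proposal correctly does not need that lemma here.
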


\begin{lemma}
    Let $\XC$ be an algebraic stack and let $\clg{E}$ be a vector bundle over $\XC$. 
    The canonical morphism of pointed sheaves 
    \[
    \PR(\clg{E}\oplus \clg{O}_\XC)/\PR(\clg{E}) \to \mathrm{Th}(\clg{E})
    \]
    is an $\A^1$-equivalence.    
\end{lemma}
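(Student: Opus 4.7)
The plan is to factor the canonical map through an intermediate quotient and to invoke Zariski (equivalently smooth-Nisnevich) excision. Introduce the open substacks $U_1:=\clg{E}=\PR(\clg{E}\oplus\clg{O}_\XC)\setminus\PR(\clg{E})$ and $U_2:=\PR(\clg{E}\oplus\clg{O}_\XC)\setminus 0_\XC$, where $0_\XC$ is the image of the zero section $\XC\hookrightarrow\clg{E}\hookrightarrow\PR(\clg{E}\oplus\clg{O}_\XC)$. Both lie in $\mathbf{SmRep}_\XC$ with intersection $U_1\cap U_2=\clg{E}\setminus 0_\XC$ and union $\PR(\clg{E}\oplus\clg{O}_\XC)$. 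Since $\PR(\clg{E})\subset U_2$, the canonical map factors in $\Hm(\XC)_*$ as
\[
\PR(\clg{E}\oplus\clg{O}_\XC)/\PR(\clg{E})\longrightarrow \PR(\clg{E}\oplus\clg{O}_\XC)/U_2\xleftarrow{\;\sim\;} \clg{E}/(\clg{E}\setminus 0_\XC)=\mathrm{Th}(\clg{E}),
\]
so it suffices to show that the left arrow is an $\A^1$-equivalence and that the right arrow (to be constructed via excision) is an actual equivalence.

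For the excision identification, $\{U_1,U_2\}$ is a Zariski cover of $\PR(\clg{E}\oplus\clg{O}_\XC)$ in $\mathbf{SmRep}_\XC$, hence a Nisnevich distinguished square. Nisnevich descent makes the square cocartesian in $\Hm(\XC)$; taking horizontal cofibers yields the claimed equivalence $\clg{E}/(\clg{E}\setminus 0_\XC)\simeq \PR(\clg{E}\oplus\clg{O}_\XC)/U_2$.

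For the left arrow, it is enough to show that the inclusion $\PR(\clg{E})\hookrightarrow U_2$, $[v]\mapsto [v:0]$, is an $\A^1$-equivalence. The retraction $U_2\to\PR(\clg{E})$, $[v:t]\mapsto[v]$, realises $U_2$ as the total space of a line bundle over $\PR(\clg{E})$ having the above inclusion as its zero section. Pulling back along a smooth Nisnevich cover of $\PR(\clg{E})$ by a scheme (Theorem \ref{theorem_hall}) refined by a Zariski cover that trivialises the line bundle reduces the claim to the statement that the zero section $V\to V\times_\XC\A^1_\XC$ is an $\A^1$-equivalence, which holds by Construction \ref{constr_SH}. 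Since the motivic equivalences in $\Hm(\XC)$ are detected after smooth-Nisnevich pullback by virtue of Proposition \ref{prop_cechnerve_niscover}, the original map is an $\A^1$-equivalence.

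The main obstacle, aside from bookkeeping, lies in verifying that the scheme-theoretic inputs---Zariski excision on the total space and the $\A^1$-contractibility of trivial line bundles---propagate correctly through the smooth-Nisnevich site of $\XC$. Both points reduce to standard arguments over schemes via the equivalence of sites $\Sh_\tau(\Sm_\XC)\simeq\Sh_\tau(\mathbf{SmRep}_\XC)$ proved above together with Proposition \ref{prop_cechnerve_niscover}.
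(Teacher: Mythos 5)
Your proof is correct and follows essentially the same route as the paper: both arguments use the Zariski cover of $\PR(\clg{E}\oplus\clg{O}_\XC)$ by $\clg{E}$ and by the complement of the zero section to get the excision identification $\mathrm{Th}(\clg{E})\simeq\PR(\clg{E}\oplus\clg{O}_\XC)/U_2$, reduce the claim to the inclusion $\PR(\clg{E})\hookrightarrow U_2$ being an $\A^1$-equivalence, and then handle that inclusion by passing to a smooth Nisnevich cover by a scheme via Theorem \ref{theorem_hall} and Proposition \ref{prop_cechnerve_niscover}. The only cosmetic difference is that you unwind the final step explicitly (exhibiting $U_2$ as the total space of a line bundle over $\PR(\clg{E})$ and then trivializing), whereas the paper at that point simply cites the corresponding scheme-level statement from Morel--Voevodsky.
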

\begin{proof}
    The proof works as in the scheme case \cite[Proposition 2.17]{MV_A1_homotopy}. 
    The Zariski cover of $ \PR(\clg{E}\oplus \clg{O}_\XC)$ given by
    \[
     \PR(\clg{E}\oplus \clg{O}_\XC) = \clg{E} \cup  (\PR(\clg{E}\oplus \clg{O}_\XC)\smallsetminus \XC)
    \]
    where the closed embedding of $\XC$ into $\PR(\clg{E}\oplus \clg{O}_\XC)$ is the composition of the embedding of $\clg{E}$ with the zero section.
    The above cover induces an isomorphism of pointed sheaves
    \[
    \mathrm{Th}(\clg{E}) = \PR(\clg{E}\oplus \clg{O}_\XC)/(\PR(\clg{E}\oplus \clg{O}_\XC)\smallsetminus \XC).
    \]
    Thus, it suffices to show that the embedding $\PR(\clg{E}) \to \PR(\clg{E}\oplus \clg{O}_\XC)\smallsetminus \XC$ is an $\A^1$-equivalence. 
    By Theorem \ref{theorem_hall}, there exists a smooth Nisnevich cover of $\XC$ by a scheme $p \colon X \to \XC$. Since we have a Nisnevich equivalence $|\check{C}^\bullet(p)| \sim \XC$ it suffices to show the claim after base change to $X_n$ for all $n$.
    Because the formation of the projective space commutes with base change,
    we are reduced to the corresponding statement in the case of a scheme, which is \cite[Proposition 2.17]{MV_A1_homotopy}.
\end{proof}

\begin{lemma}\label{lem_cyclic_perm}
    Let $T = \G_m \otimes S^1$. The cyclic permutation $T \otimes T \otimes T$ is a homotopy equivalence in $\Hm(\XC)_*$.
\end{lemma}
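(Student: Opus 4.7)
The strategy is to identify $T^{\otimes 3}$ with the punctured affine space $\A^3_\XC/(\A^3_\XC \smallsetminus 0)$ and to recognize the cyclic permutation as the action of a matrix $C \in SL_3(\Z)$ that is $\A^1$-homotopic to the identity. Concretely, applying Lemma~\ref{lem_SH_descriptions} three times, one obtains an equivalence
\[
T^{\otimes 3} \simeq \A^3_\XC/(\A^3_\XC \smallsetminus 0)
\]
in $\Hm(\XC)_*$, under which the cyclic permutation of the three tensor factors on the left is intertwined with the endomorphism of the right-hand side induced by the cyclic permutation of coordinates on $\A^3_\XC$, i.e.\ with the action of
\[
C = \begin{pmatrix} 0 & 0 & 1 \\ 1 & 0 & 0 \\ 0 & 1 & 0 \end{pmatrix} \in SL_3(\Z).
\]

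Next, one exhibits an explicit $\A^1$-homotopy from the identity to the action of $C$. Since $C$ factors as a product of elementary matrices $E_{ij}(\lambda) = I + \lambda e_{ij}$ over $\Z$, and each $E_{ij}(\lambda)$ is connected to $I$ by the straight-line $\A^1$-path $t \mapsto I + t\lambda e_{ij} \colon \A^1_\XC \to SL_{3,\XC}$, and because the $SL_3$-action on $\A^3$ preserves $\A^3 \smallsetminus 0$, these paths descend to $\A^1$-homotopies on the pointed quotient $\A^3_\XC/(\A^3_\XC \smallsetminus 0)$ between the identity and the action of each $E_{ij}(\lambda)$. Concatenating these gives a homotopy from the identity to the action of $C$, hence to the cyclic permutation on $T^{\otimes 3}$, which is in particular a homotopy equivalence in $\Hm(\XC)_*$.

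A conceptually cleaner variant would be to reduce directly to the scheme case: Theorem~\ref{theorem_hall} furnishes a smooth Nisnevich cover $p \colon X \to \XC$ by a scheme, and by Proposition~\ref{prop_cechnerve_niscover} an equivalence in $\Hm(\XC)_*$ may be checked after pullback to each term $X_n$ of the \v{C}ech nerve of $p$, reducing the statement to the classical Morel--Voevodsky result for schemes. The main obstacle I anticipate is the verification of the identification in the first step: one must track how the categorical symmetry of the monoidal structure on $\Hm(\XC)_*$ is transported across the Morel--Voevodsky equivalences of Lemma~\ref{lem_SH_descriptions} to the geometric coordinate permutation of $\A^3_\XC$, and confirm that the resulting permutation matrix really lies in $SL_3$ rather than acquiring a sign (which is where the cyclic parity of a permutation of three factors plays its crucial role). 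Once this identification is granted, the $SL_3$-homotopy argument is formal and proceeds uniformly over any base stack.
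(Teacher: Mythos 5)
The paper's proof is precisely your ``conceptually cleaner variant'': it invokes Theorem~\ref{theorem_hall} to produce a smooth Nisnevich cover $p\colon X\to\XC$ by a scheme and Proposition~\ref{prop_cechnerve_niscover} (as in the Thom space lemma just before) to reduce the claim to each term of the \v{C}ech nerve, where it is the classical Morel--Voevodsky result. Your main argument is a genuinely different route: you run the classical $SL_3$-homotopy argument directly over $\XC$, using Lemma~\ref{lem_SH_descriptions} to identify $T^{\otimes 3}$ with $\A^3_\XC/(\A^3_\XC\smallsetminus 0)$, observing that the $3$-cycle is an even permutation so its matrix $C$ lies in $SL_3(\Z)$, and then contracting $C$ to the identity via straight-line $\A^1$-homotopies through elementary matrices, which preserve $\A^3_\XC\smallsetminus 0$ and hence descend to the pointed quotient. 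This is correct and is in fact entirely base-independent: it buys you a self-contained proof that does not rely on Theorem~\ref{theorem_hall} or on the descent machinery of Proposition~\ref{prop_nis_sheaves}, whereas the paper's reduction is shorter because it simply cites the known scheme case but implicitly leans on the (not yet stated at that point) fact that equivalences in $\Hm(\XC)_*$ can be checked after pullback along the \v{C}ech nerve. You are also right to flag the transport of the symmetric monoidal structure across the identification of Lemma~\ref{lem_SH_descriptions} as the one step that needs care, and right that the parity of the $3$-cycle is the crucial point --- a transposition of two $T$-factors would give a matrix of determinant $-1$, and the argument would break, consistent with the fact that the swap on $T^{\wedge 2}$ is not homotopic to the identity.
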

\begin{proof}
    By the same argument as in the previous proof, we reduce to the scheme case, where the lemma is well-known.
\end{proof}
The previous two lemmas imply that $\SH(X)$ is
computed by either the colimit in $\Pr^L$ of 
\[
\Hm(\XC)_* \xrightarrow{\Sigma_T} \Hm(\XC)_*\xrightarrow{\Sigma_T} \Hm(\XC)_* \xrightarrow{\Sigma_T} \cdots, 
\]
where $T = (\G_m \otimes S^1,1) \simeq (\PR^1_\XC, \infty)$ or by the limit in $\Pr^R$ of
\[
\cdots \xrightarrow{\Omega_T} \Hm(\XC)_* \xrightarrow{\Omega_{T}} \Hm(\XC)_* \xrightarrow{\Omega_{T}} \Hm(\XC)_*.
\]

\begin{prop}\label{proposition-generators-SH}
    Let $\XC$ be an algebraic stack. The $\infty$-categories $\Hm(\XC)$, $\Hm(\XC)_*$ are presentable and generated under colimits by the family $X$ (resp. $X_+$) for $X \in \Sm_\XC$. Moreover, the category 
    $\SH(\XC)$ is generated under colimits, desuspensions, and negative $\PR^1$-suspensions by the family $\Sigma^\infty_{\PR^1}X_+$ for $X \in \Sm_\XC$.
\end{prop}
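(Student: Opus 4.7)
The plan is to deduce all three statements from the analogous (and trivial) facts about presheaves, using the compatibility of motivic localization with colimits and the smooth-Nisnevich resolution provided by Theorem \ref{theorem_hall}.

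First, for presentability: $\PSh(\mathbf{SmRep}_\XC)$ is presentable by construction, and $L_\mot$ is an accessible left-exact localization, so $\Hm(\XC)$ is presentable. Presentability of $\Hm(\XC)_*$ follows from the pointing construction (or Lurie's general machinery), and presentability of $\SH(\XC)$ follows from Robalo's construction of $\otimes$-invertible stabilization as a filtered colimit in $\Pr^L$, together with Lemmas \ref{lem_SH_descriptions} and \ref{lem_cyclic_perm}.

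For the generation claim in $\Hm(\XC)$, I would first note that by Yoneda the representables $V$, for $V \in \mathbf{SmRep}_\XC$, generate $\PSh(\mathbf{SmRep}_\XC)$ under colimits. Since $L_\mot$ is a left adjoint, their images generate $\Hm(\XC)$ under colimits. To refine the indexing set from $\mathbf{SmRep}_\XC$ to $\Sm_\XC$, I would apply Theorem \ref{theorem_hall} to the algebraic stack $V$, producing a smooth Nisnevich cover $X \to V$ by a scheme. The associated \v{C}ech nerve $X_\bullet \to V$ is a Nisnevich local equivalence by the same argument as Proposition \ref{prop_cechnerve_niscover} (applied with $V$ in place of $\XC$), so $V \simeq |X_\bullet|$ in $\Hm(\XC)$. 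Each $X_n = X \times_V \cdots \times_V X$ is smooth over $V$, hence smooth over $\XC$, and moreover is representable (since $X \to V$ is), so it lies in $\Sm_\XC$. This expresses every $V \in \mathbf{SmRep}_\XC$ as a colimit of objects of $\Sm_\XC$ in $\Hm(\XC)$, yielding the desired family of generators. The pointed version is immediate by applying the left adjoint $(-)_+$.

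For the stable claim, I would invoke Lemma \ref{lem_SH_descriptions} to identify $\SH(\XC) \simeq \Hm(\XC)_*[(\PR^1_\XC)^{-1}]$, noting that Lemma \ref{lem_cyclic_perm} verifies the cyclic permutation hypothesis needed for Robalo's formalism \cite[Section 2]{robalo15}. This presents $\SH(\XC)$ as the filtered colimit in $\Pr^L$ of $\Hm(\XC)_* \xrightarrow{\Sigma_T} \Hm(\XC)_* \xrightarrow{\Sigma_T} \cdots$. Hence any family of colimit-generators of $\Hm(\XC)_*$ produces, via $\Sigma^\infty_{\PR^1}$, a family of generators of $\SH(\XC)$ under colimits and negative $\PR^1$-suspensions; stability of $\SH(\XC)$ then supplies desuspensions.

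The main obstacle is the book-keeping for the refinement step from $\mathbf{SmRep}_\XC$ to $\Sm_\XC$: one must check that the \v{C}ech nerve of a smooth Nisnevich cover of an arbitrary smooth representable $V/\XC$ remains a Nisnevich local equivalence after restriction to $\Sm_\XC$, which is where Theorem \ref{theorem_hall} (guaranteeing scheme-valued covers) and the representability assumption are both essential.
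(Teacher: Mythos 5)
Your presentability argument and your stable step are both fine (with one small slip: the motivic localization $L_\mot$ is \emph{not} left-exact, since inverting the $\A^1$-projections is not a topological localization; what you actually need is that it is an accessible Bousfield localization at a small set of maps, which does preserve presentability). The real issue is in the refinement step from $\mathbf{SmRep}_\XC$ to $\Sm_\XC$. You write that each \v{C}ech nerve term $X_n = X\times_V\cdots\times_V X$ ``is representable (since $X\to V$ is), so it lies in $\Sm_\XC$.'' But $\Sm_\XC$ consists of smooth \emph{schemes} over $\XC$, whereas being smooth and representable over $\XC$ only places $X_n$ in $\mathbf{SmRep}_\XC$ — which is where you started. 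Even when $X$ is a scheme, the fibre product $X\times_V X$ is a priori only an algebraic space, because the diagonal of $V$ is merely representable by algebraic spaces. So the claim $X_n\in\Sm_\XC$ is unjustified, and without it the argument only produces the tautological statement that $\mathbf{SmRep}_\XC$ generates. Iterating Theorem \ref{theorem_hall} on each $X_n$ to force scheme terms introduces a bisimplicial resolution and does not obviously terminate.

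The paper takes a shorter route that sidesteps \v{C}ech nerves entirely: it first proves (the unnamed proposition just before Theorem \ref{thm-sh}) that the inclusion $\Sm_\XC\hookrightarrow\mathbf{SmRep}_\XC$ induces an equivalence $\Sh_\tau(\Sm_\XC)\simeq\Sh_\tau(\mathbf{SmRep}_\XC)$, via the comparison lemma for dense subsites (Stacks Project Tag 039Z); the only geometric input is that every $V\in\mathbf{SmRep}_\XC$ admits a Nisnevich cover by a scheme, i.e.\ Theorem \ref{theorem_hall}. Under that equivalence, $\Hm(\XC)$ becomes a localization of $\PSh(\Sm_\XC)$, and generation by the representables $X\in\Sm_\XC$ is then immediate from Yoneda plus the fact that a left-adjoint localization functor preserves colimit-generating families. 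If you replace your \v{C}ech argument with an appeal to that topos comparison, the rest of your outline (pointing, and passing to $\SH(\XC)$ via the filtered colimit description of the $\PR^1$-stabilization from \cite[Section 2]{robalo15} and Lemma \ref{lem_cyclic_perm}) is correct and matches the paper's.
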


\begin{proof}
    The presentability of all categories follows directly from the fact that $\PSh(\Sm_\XC)$ is presentable, and we are localizing with respect to a small class of morphisms. Moreover, since $\PSh(\Sm_\XC)$ is generated under the family $X \in \Sm_\XC$, so is $\Hm(\XC)$ and similarly for the pointed version. The fact that $\SH(\XC)$ is generated by $\Sigma^\infty_{\PR^1}X_+$ is a general fact about stabilization of symmetric monoidal $\infty$-categories \cite[Proposition 2.7.1]{binda2024logarithmic}.
\end{proof}

\begin{cor}\label{cor_a_sharp}
    Let $f \colon \XC \to \YC$ be a smooth morphism of Nisnevich local stacks. Then the functor $f^* \colon \SH(\YC) \to \SH(\XC)$ admits a left-adjoint.
\end{cor}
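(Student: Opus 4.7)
The plan is to construct $f_\#$ directly from a continuous functor of sites and verify that it descends through the motivic localization and $\PR^1$-stabilization.

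First, I would observe that post-composition with $f$ defines a functor $c_f \colon \Sm_\XC \to \Sm_\YC$ sending a smooth scheme $u \colon U \to \XC$ to the composition $f \circ u \colon U \to \YC$. The composition is smooth, and since $U$ is a scheme and the diagonal of $\YC$ is representable by algebraic spaces, $f \circ u$ is automatically representable by algebraic spaces; in particular it belongs to $\Sm_\YC$. Crucially, this step does not require $f$ itself to be representable, which is precisely why the construction becomes transparent when phrased on the smooth Nisnevich site of schemes rather than on $\mathbf{SmRep}$.

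Next, I would verify that $c_f$ is continuous for the Nisnevich topology (the topologies on $\Sm_\XC$ and $\Sm_\YC$ are the restrictions of the Nisnevich topology of schemes, so a Nisnevich covering family of $U$ over $\XC$ is immediately a covering family over $\YC$) and compatible with $\A^1$-products (since $c_f(U \times_\XC \A^1_\XC) = U \times_\YC \A^1_\YC$). Hence the adjunction $(f_\#^{\pre}, f^{*,\pre})$ on presheaves induced by $c_f$ descends to Nisnevich sheaves, then to the motivic localization $\Hm(\XC) \rightleftarrows \Hm(\YC)$, and finally, after passing to pointed objects and $\PR^1$-stabilization, to the desired adjunction $f_\# \dashv f^*$ on $\SH$.

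The main obstacle is to identify the site-theoretic $f^{*,\pre}$ with the Morel-Voevodsky pullback $f^*$ of Construction \ref{constr_SH}, which is defined using the larger site $\mathbf{SmRep}$. On representables, $f^{*,\pre}(V)$ for $V \in \Sm_\YC$ is the Yoneda sheaf on $\Sm_\XC$ of the stack $V \times_\YC \XC$. When $f$ is non-representable, this fiber product is an algebraic stack rather than a scheme, but it still lies in $\mathbf{SmRep}_\XC$, since the pullback of a representable morphism along any morphism of stacks is again representable. Its associated Nisnevich sheaf then coincides, under the equivalence $\Sh_\Nis(\Sm_\XC) \simeq \Sh_\Nis(\mathbf{SmRep}_\XC)$ from the preceding proposition, with the sheaf obtained from the Morel-Voevodsky $f^*$, closing the argument.
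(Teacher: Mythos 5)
Your proposal is correct and mirrors the paper's proof: both construct $f_\#$ from a continuous functor of sites, descend the adjunction through the $\A^1$-localization, and then extend to the $\PR^1$-stabilization. Two remarks. First, your choice to work with the site $\Sm_{(-)}$ of smooth \emph{schemes} rather than $\mathbf{SmRep}_{(-)}$ is a genuine simplification: the paper's ``forgetful'' functor $\mathbf{SmRep}_\XC \to \mathbf{SmRep}_\YC$ is not literally well-defined when $f$ is non-representable (a composite of a representable morphism with a non-representable one need not be representable), whereas on $\Sm$ there is no such issue since every morphism from a scheme to an algebraic stack is representable; the topoi agree by Proposition \ref{prop_nis_sheaves}'s companion, so this is cosmetic, but your phrasing is cleaner. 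Second, the closing step --- ``after passing to pointed objects and $\PR^1$-stabilization'' --- compresses the one place where something must actually be checked. Since $T_\XC$ and $T_\YC$ live over different bases, $f_\#$ does not automatically commute with $T$-suspension; one uses that $f^*$ is symmetric monoidal with $f^* T_\YC \simeq T_\XC$ together with the smooth projection formula $f_\#(E \otimes f^* F) \simeq f_\#(E) \otimes F$ to get $f_\#(\Sigma_{T_\XC} E) \simeq \Sigma_{T_\YC} f_\#(E)$, which is precisely what lets $f_\#$ descend to $\SH$. The paper's proof invokes this explicitly and you should too, though the argument you have in mind is clearly the same.
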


\begin{proof}
    Since $f$ is smooth, the forgetful functor induces a morphism of sites
    \[
    f \colon (\mathbf{SmRep}_{\YC},\Nis) \to (\mathbf{SmRep}_\XC,\Nis).
    \]
    By \cite[Theorem 4.5.10]{ayoubthesisII}, $f$ gives rise to a pair of adjoint functors
    \[
    f_\# \colon  \Sh_\Nis(\mathbf{SmRep}_\XC,\CC) \leftrightarrows \Sh_\Nis(\mathbf{SmRep}_\YC,\CC) :f^*,
    \]
    where $\CC$ denotes either the category of spaces or the category of spectra.
    Since $f^*$ preserves $\A^1$-invariant sheaves, $f$ induces a functor
    \[
    f^*\colon \SH(\XC)^{S^1} \to \SH(\YC)^{S^1}
    \]
    with left adjoint given by $L_\mot f_\#$.    
    Finally, by the description of $\SH(\XC)$ in Lemma \ref{lem_SH_descriptions}, the smooth projection formula\footnote{The proof of the smooth projection formula reduces to the representable case, where it follows from an easy computation.}, and the fact that $f^* \G_{m,\YC} = \G_{m,\XC}$, it follows that
    $f_\#$ extends uniquely to a functor
    \[
    f_\# \colon \SH(\XC) \to \SH(\YC).
    \]
    In particular, $f_\#$ is determined by
    \[
    f_\#(\Sigma_{\PR^1}^\infty X_+) = \Sigma^\infty_{\PR^1} f_\#(X_+)
    \]
    for all $X \in \Sm_\XC$.
\end{proof}

The main theorem is a consequence of the following descent property of sheaves on the smooth Nisnevich (or \'etale) site applied to the \v{C}ech nerve of a smooth Nisnevich cover.
Throughout this section, we write $\tau$ for the \'etale or Nisnevich topology.

\begin{prop}\label{prop_nis_sheaves}
    Let $X_\bullet \in \Sm_\XC$ be a simplicial diagram. Write $p_n$ for the structure map of $X_n$.
    Assume that $|X_\bullet| \to \XC$ is a $\tau$-local equivalence. 
    Then, the functors $p_n^* \colon \Sh_\tau(\Sm_\XC) \to \Sh_\tau(\Sm_{X_n})$ induce an equivalence in $\Pr^L$
    \begin{equation*}
        p^* \colon \Sh_\tau(\Sm_\XC) \to \lim_{\Delta} \Sh_\tau(\Sm_{X_\bullet}).
    \end{equation*}
\end{prop}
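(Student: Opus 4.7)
The plan is to prove the equivalence by constructing an explicit right adjoint $p_\ast$ of $p^\ast$ and verifying that both the unit and the counit are equivalences. I would define $p_\ast \colon \lim_\Delta \Sh_\tau(\Sm_{X_\bullet}) \to \Sh_\tau(\Sm_\XC)$ on a cartesian section $(F_\bullet)$ by
\[
    (p_\ast F_\bullet)(U) := \lim_\Delta F_n(U \times_\XC X_n), \qquad U \in \Sm_\XC,
\]
which is well-posed because $U \times_\XC X_n \in \Sm_{X_n}$ (smoothness is stable under base change) and the sheaf condition is preserved by the limit. The right-adjoint relation to $p^\ast$ follows formally from the corresponding adjunctions at each cosimplicial level together with the defining universal property of the $\Pr^L$-limit.

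For the unit $\mathrm{id} \to p_\ast p^\ast$, universality of colimits in the $\infty$-topos $\mathcal{X} := \Sh_\tau(\Sm_\XC)$ base-changes the hypothesis $|X_\bullet| \simeq \XC$ to $|U \times_\XC X_\bullet| \simeq U$ for every $U \in \Sm_\XC$. Hence for $F \in \Sh_\tau(\Sm_\XC)$,
\[
    (p_\ast p^\ast F)(U) \simeq \lim_\Delta F(U \times_\XC X_n) \simeq \Map\bigl(|U \times_\XC X_\bullet|,F\bigr) \simeq F(U),
\]
so $p^\ast$ is fully faithful.

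For the counit $p^\ast p_\ast \to \mathrm{id}$, I would exploit that $p_n^\ast$ is exact: applying it to the hypothesis yields $|V \times_\XC X_\bullet| \simeq V$ in $\Sh_\tau(\Sm_{X_n})$ for any $V \in \Sm_{X_n}$, whence $F_n(V) \simeq \lim_\Delta F_n(V \times_\XC X_m)$, where each $V \times_\XC X_m$ is viewed in $\Sm_{X_n}$ via $V \to X_n$. Matching this with the cosimplicial object $[m] \mapsto F_m(V \times_\XC X_m)$ coming from $p_\ast$ (in which $V \times_\XC X_m$ carries its natural $X_m$-structure via the projection) uses the cartesian compatibility of $(F_\bullet)$: the structure equivalences $\tilde\alpha^\ast F_k \simeq F_l$ induced by the simplicial maps $\tilde\alpha \colon X_l \to X_k$ coming from morphisms $\alpha$ in $\Delta$ assemble into coherent level-wise identifications between the two cosimplicial diagrams.

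The main obstacle will be completing the counit verification. The unit is a direct consequence of universality of colimits in the ambient $\infty$-topos together with the hypothesis. By contrast, the counit requires carefully intertwining two a priori distinct cosimplicial diagrams supported on the common underlying simplicial scheme $V \times_\XC X_\bullet$ but equipped with the two different structure morphisms (to $X_m$ versus to $X_n$ through $V$); the cartesian structure of $(F_\bullet)$ has to be used at each cosimplicial level and in a way compatible with the face and degeneracy maps so that the two totalisations agree.
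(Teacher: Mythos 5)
Your unit computation matches the paper's exactly: $p_*p^*F(U)\simeq\lim_\Delta F(U\times_\XC X_n)\simeq F(U)$ because $U\times_\XC X_\bullet\to U$ is a $\tau$-local equivalence. But where you diverge — attempting a direct verification of the counit — is precisely where the paper takes a different, more robust route, and your sketch contains a genuine gap that I do not think you can repair at the level of generality the proposition requires.

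The problem is the levelwise matching you propose in the final paragraph. For a cartesian section $(F_\bullet)$ and $V\in\Sm_{X_n}$, you want to identify the cosimplicial object $[m]\mapsto F_m(V\times_\XC X_m)$ (where $V\times_\XC X_m$ is given its $X_m$-structure via $\mathrm{pr}_2$) with $[m]\mapsto F_n(V\times_\XC X_m)$ (where $V\times_\XC X_m$ is given its $X_n$-structure via $V$). Already at $m=n$ these are $F_n(V\times_\XC X_n,\mathrm{pr}_2)$ and $F_n(V\times_\XC X_n,\mathrm{pr}_1)$: the same underlying $\XC$-scheme with two different structure maps to $X_n$, hence two genuinely different objects of $\Sm_{X_n}$. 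The cartesian structure of $(F_\bullet)$ only supplies identifications $\tilde\alpha^*F_k\simeq F_l$ over transition maps $\tilde\alpha\colon X_l\to X_k$ induced by morphisms in $\Delta$, and for a general simplicial diagram there is no map in $\Delta$ inducing the comparison you need between the two structure maps at a fixed cosimplicial level. (In the special case of a \v{C}ech nerve one has extra degeneracies from $V\to X_0$ and the familiar shift/décalage comparison, but the proposition is stated for an arbitrary simplicial $X_\bullet$ with $|X_\bullet|\to\XC$ a local equivalence, and your argument would need to handle that generality.) So "the cartesian compatibility assembles into coherent levelwise identifications" is the claim that would need to be proved, and I don't believe it is true as stated.

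The paper sidesteps the counit entirely. After establishing the unit (hence full faithfulness of $p^*$), it proves essential surjectivity by an abstract descent argument: each $i_n\colon\Sm_{X_n}\to\Sm_\XC/X_n$ induces a fully faithful functor $i_n^*\colon\Sh_\tau(\Sm_{X_n})\hookrightarrow\Sh_\tau(\Sm_\XC)/X_n$, so $\lim_\Delta\Sh_\tau(\Sm_{X_\bullet})$ sits fully faithfully inside $\lim_\Delta\Sh_\tau(\Sm_\XC)/X_\bullet$; and by \cite[Proposition 6.3.5.14]{HTT09} the canonical functor $q^*\colon\Sh_\tau(\Sm_\XC)\to\lim_\Delta\Sh_\tau(\Sm_\XC)/X_\bullet$ is an equivalence (this is descent for colimits in an $\infty$-topos, applied to $|X_\bullet|\simeq\XC$). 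Since $q^*$ factors as $(\lim_\Delta i_\bullet)\circ p^*$ with $\lim_\Delta i_\bullet$ fully faithful, a two-out-of-three argument forces $p^*$ to be an equivalence. This replaces your delicate cosimplicial bookkeeping with a single appeal to the universality of colimits in $\infty$-topoi, and it is exactly this move that your proposal is missing. If you want to salvage your approach, the cleanest fix is to abandon the direct counit computation and instead import Lurie's descent result as the paper does.
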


Let $X_\bullet \in \Sm_\XC$ be a simplicial diagram. We begin by describing the $\infty$-category $\lim_{\Delta} \Sh_\tau(\Sm_{X_\bullet})$ and the functor $p^*$ more explicitly.
By \cite[6.3.2.3]{HTT09}, the limit $\lim_\Delta \Sh_\tau(\Sm_{X_\bullet})$ in $\Pr^L$ agrees with the limit in $\Cat_\infty$, and limits of $\infty$-categories are computed as follows, see \cite[Section 3.3]{HTT09} for more details. The cosimplicial diagram 
\begin{align*}
G \colon \Delta &\to \Cat_\infty \\ [n] &\mapsto \Sh_\tau(\Sm_{X_n})
\end{align*}
corresponds to a cocartesian fibration
\[
\clg{E} \colon \int_\Delta G \to \Delta,
\]
where $\int_\Delta G$ is the so-called $\infty$-category of elements of $G$, and the limit 
$\lim_\Delta G([n])$ is given by the category of cocartesian sections of $\clg{E}$. In our situation, the category
$\int_{\Delta} G$ is the (homotopy coherent nerve) of the simplicial category
with objects
\[
([n], F_n) \in \Delta \times \Sh_\tau(\Sm_{X_n})
\]
and morphisms $([m], F_n) \to ([n], F_n)$ given by pairs 
\[(\alpha,f_\alpha) \in \Hom_\Delta([m],[n]) \times \Hom_{\Sh(\Sm_{X_n})}(\alpha^*F_m, F_n),\]
where $\alpha^* \colon \Sh_\tau(\Sm_{X_m}) \to \Sh_\tau(\Sm_{X_n})$ is the pullback along the map
$X_n \to X_m$ corresponding to $\alpha$. The cocartesian fibration $\clg{E}$ is given by $([n],F_n) \mapsto [n]$.
In particular, the $\infty$-category 
of sections $\Fun_{\Delta}(\Delta, \int_\Delta G)$ can be described explicitly.

\begin{defn}
    Let $X_\bullet \in \Sm_\XC$ be a simplicial diagram. We define $\Sh_\tau(\Sm_{X_\bullet})$ as the $\infty$-category $\Fun_{\Delta}(\Delta, \int_\Delta G)$ of sections of $\clg{E}$     
    \footnote{We think of $\Sh_\tau(\Sm_{X_n})$ as the category of sheaves on the simplicial site $\Sm_{X_n}$, although this is not well-defined because the smooth site is not functorial.}. 
    Explicitly, the objects of $\Sh_\tau(\Sm_{X_\bullet})$ are collections of sheaves $F_n \in \Sh_\tau(\Sm_{X_n})$ for every $n \geq 0$, together with compatible morphisms
    \[
    F(\alpha) \colon \alpha^*F_m \to F_n
    \]
    for every $\alpha \colon [m] \to [n]$. We write $F = (F_n,F(\alpha)) \in \Sh_\tau(\Sm_{X_n})$ or simply $F = (F_n)$ if the morphisms $F(\alpha)$ are clear from the context.
\end{defn}

With this notation, the category of cocartesian sections of $\clg{E}$ is the full subcategory of $\Sh(\Sm_{X_\bullet})$
given by $F = (F_n,F(\alpha))$
for which all the morphisms $F(\alpha)$ are equivalences.

\begin{lemma}
\label{lem_comp_functor}
    Let $X_\bullet \in \Sm_\XC$ be a simplicial diagram. The functors $p_n^*$ induce a functor
    \begin{equation*}
        p^{*}\colon \Sh_\tau(\Sm_{\XC}) \to \Sh_\tau(\Sm_{X_\bullet})
    \end{equation*}
    with right adjoint
    \begin{equation*}
        p_*F = \lim_{\Delta} p_{n,*} F_n,
    \end{equation*}
    for $F = (F_n, F(\alpha)) \in \Sh_\tau(\Sm_{X_\bullet})$.
\end{lemma}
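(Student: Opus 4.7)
The plan is to construct $p^*$ by assembling the individual pullback functors $p_n^*$ into a single functor landing in the category of sections, and then identify the right adjoint via the componentwise adjunctions $p_n^* \dashv p_{n,*}$.

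First, I would construct $p^*$. Each smooth morphism $p_n \colon X_n \to \XC$ defines a morphism of sites $(\Sm_\XC, \tau) \to (\Sm_{X_n}, \tau)$, hence an adjunction $p_n^* \dashv p_{n,*}$ on the associated $\infty$-topoi of sheaves (by \cite[Theorem 4.5.10]{ayoubthesisII}, exactly as in the proof of Corollary \ref{cor_a_sharp}). For each simplicial operator $\alpha \colon [m] \to [n]$, the associated $\XC$-morphism $X_n \to X_m$ yields a canonical equivalence $\alpha^* \circ p_m^* \simeq p_n^*$, since pullback along the composite agrees with $p_n^*$ up to coherent homotopy. For $G \in \Sh_\tau(\Sm_\XC)$ I set
\[
p^*(G) := \bigl(p_n^* G,\ \alpha^* p_m^* G \xrightarrow{\sim} p_n^* G\bigr),
\]
which is in fact a cocartesian section of $\clg{E}$. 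To promote this assignment to an $\infty$-functor $p^* \colon \Sh_\tau(\Sm_\XC) \to \Sh_\tau(\Sm_{X_\bullet})$, I would package the family $(p_n^*)_n$ into a morphism of cocartesian fibrations over $\Delta$ from the trivial fibration $\Delta \times \Sh_\tau(\Sm_\XC) \to \Delta$ to $\clg{E}$, and take the induced functor on sections.

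Next, I would identify the right adjoint. For $F = (F_n, F(\alpha)) \in \Sh_\tau(\Sm_{X_\bullet})$, the structure map $F(\alpha) \colon \alpha^* F_m \to F_n$ transposes to $F_m \to \alpha_* F_n$; applying $p_{m,*}$ and using $p_{m,*} \alpha_* \simeq p_{n,*}$ (since $p_m \circ \alpha = p_n$) produces $p_{m,*} F_m \to p_{n,*} F_n$, so that $[n] \mapsto p_{n,*} F_n$ defines a cosimplicial object of $\Sh_\tau(\Sm_\XC)$. I set $p_* F := \lim_\Delta p_{n,*} F_n$. To verify adjunction, I unwind the mapping space in the section category: a morphism $p^* G \to F$ consists of a family of maps $f_n \colon p_n^* G \to F_n$ such that, for every $\alpha$, the square relating the $f_n$ through $F(\alpha)$ and the canonical equivalence on $p^*G$ commutes. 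Transposing componentwise via $p_n^* \dashv p_{n,*}$ converts this data into a cone from $G$ over $[n] \mapsto p_{n,*} F_n$, i.e., into a single map $G \to \lim_\Delta p_{n,*} F_n$. This identification of mapping spaces is natural in $G$ and $F$, establishing the adjunction.

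The main technical obstacle is the coherent assembly of the componentwise data into genuine $\infty$-functors between limit $\infty$-categories. I would sidestep this abstractly: the functor $p^*$ preserves all small colimits, because each $p_n^*$ does and colimits in the section category $\Sh_\tau(\Sm_{X_\bullet})$ are computed componentwise (using that the transition functors $\alpha^*$ are themselves left adjoints and hence preserve colimits). Since both $\Sh_\tau(\Sm_\XC)$ and $\Sh_\tau(\Sm_{X_\bullet})$ are presentable, the adjoint functor theorem produces a right adjoint, which the pointwise computation above identifies with $\lim_\Delta p_{n,*}(-)$.
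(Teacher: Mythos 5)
Your proof is correct and takes essentially the same approach as the paper's: construct $p^*$ by assembling the degreewise pullbacks $p_n^*$ and identify the right adjoint from the componentwise adjunctions $p_n^* \dashv p_{n,*}$. The paper organizes this slightly more compactly by factoring $p^*$ through the constant-cosimplicial-sheaf functor and then invoking the $\mathrm{const} \dashv \lim_\Delta$ adjunction, which spares it the explicit bookkeeping of cocartesian fibrations and mapping spaces that you carry out.
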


\begin{proof}
Let $\Sh_\tau(\Sm_{\XC_\bullet})$ be the category of 
cosimplicial sheaves on $\XC$.
We define $p^*$ as the composition
\[
\Sh_\tau(\Sm_\XC) \xrightarrow{\rom{const}} \Sh_\tau(\Sm_{\XC_\bullet})
\xrightarrow{p^*_{\bullet}} \Sh_\tau(\Sm_{X_\bullet}).
\]
By \cite[Chapter 18]{Hirsch03}, taking the homotopy limit of a cosimplicial sheaf and the functor taking a sheaf to the constant cosimplicial sheaf form a pair of adjoint functors,
\begin{equation*}
    \mathrm{const} \colon  \Sh_\tau(\Sm_{\XC}) \leftrightarrows \Sh_\tau(\Sm_{\XC_\bullet}) : \lim_\Delta.
\end{equation*}
Thus, $p^*$ admits a right adjoint $p_*$, which is given by the formula above.
\end{proof}

\begin{proof}[Proof of Proposition \ref{prop_nis_sheaves}]
We start by showing that the unit $\id\to p_* \circ p^*$ is an equivalence.
Let $F \in \Sh_\tau(\Sm_\XC)$ and $U \in \Sm_\XC$. We have the following natural equivalences
\begin{equation*}
    p_* p^* F (U) \simeq \lim_{\Delta} p_{n,*} p^*F \simeq \lim_{\Delta} F(U\times_\XC X_n) \simeq F(U),
\end{equation*}
because $U \times_\XC X_\bullet \to U$ is a $\tau$-equivalence. 

The essential surjectivity of $p^*$ follows from the general descent theory for $\infty$-topoi. 
First, note that the inclusion functors $i_n \colon \Sm_{X_n} \to \Sm_\XC/X_n$ induce fully faithful (see below) functors $i_n^* \colon \Sh_\tau(\Sm_{X_n}) \to \Sh_\tau((\Sm_\XC)/X_n) \simeq \Sh_\tau(\Sm_\XC)/X_n$, and hence we view $\Sh_\tau(\Sm_{X_\bullet})$ as a full subcategory of $\Sh_\tau(\Sm_{\XC})/X_\bullet$.
Our goal becomes to identify the essential image of $\Sh_\tau(\Sm_\XC)$ in this larger category. 
\footnote{We write $\Sh_\tau(\Sm_{\XC})/X_\bullet$ for the category of sections of the cocartesian fibration associated to $[n] \to \Sh_\tau(\Sm_\XC)/X_\bullet$.}
We have to following commutative diagram 
\begin{equation*}
\begin{tikzcd}
    & \Sh_\tau(\Sm_\XC) \ar[dr,"q^*"] \ar[dl,swap, "p^*"] \\
    \lim_\Delta \Sh_\tau(\Sm_{X_\bullet}) \ar[rr,hook,"\lim_\Delta i_\bullet"] \ar[d, hook] &&  \lim_\Delta \Sh_\tau(\Sm_\XC)/X_\bullet \ar[d,hook]\\
    \Sh_\tau(\Sm_{X_\bullet}) \ar[rr,hook,"i_\bullet"] && \Sh_\tau(\Sm_\XC)/X_\bullet
\end{tikzcd}
\end{equation*}
It follows from \cite[Proposition 6.3.5.14]{HTT09} that $q^*$ is an equivalence and hence $\lim_\Delta i_\bullet$ and $p^*$ must be equivalences.

It remains to show that the functors $i^*_n$ are fully faithful. Let $i_{n,*}$ be the right adjoints\footnote{Using that $i_n$ is continuous.}, given by
\begin{align*}
i_{n,*} \colon \Sh_\tau((\Sm_\XC)/X_n)  &\to \Sh_\tau(\Sm_{X_n}) \\
F &\mapsto F \circ i_n.
\end{align*}
We check that the unit map $F \to i_{n,*}i^*_n F$ is an equivalence.
Let $U \in \Sm_{X_n}$ and $F \in \Sh_\tau(\Sm_{X_n})$, then
\[
i_{n,*} i^*_n F (U) = i^*_n F(i_n(U)) = (\colim_{(V \in \Sm_{X_n}, i_n(U) \to i_n(V))^\op} F(V))^\# = F(U).
\]
\end{proof}

\begin{lemma}\label{lem_cart_sheaves_etale}
Let $\XC$ be an algebraic stack. Let $X_\bullet \in \Sm_\XC$ be a simplicial diagram such that $|X_\bullet| \to \XC$ is a \'etale-local equivalence. Then, the functor $p^*$ defined above restricts to an equivalence
\[
p^* \colon \Sh_\et(\XC)^\wedge \to \lim_\Delta \Sh_\et(\Sm_{X_\bullet})^\wedge.
\]
\end{lemma}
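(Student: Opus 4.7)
The plan is to upgrade the equivalence $p^{*}\colon \Sh_{\et}(\Sm_\XC) \xrightarrow{\sim} \lim_\Delta \Sh_{\et}(\Sm_{X_\bullet})$ of Proposition \ref{prop_nis_sheaves} (in the \'etale case) to one between the hypercomplete subcategories on each side, by checking that both adjoints $(p^{*}, p_{*})$ of Lemma \ref{lem_comp_functor} preserve hypercomplete objects.

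First I would identify $\lim_\Delta \Sh_{\et}(\Sm_{X_\bullet})^\wedge$ explicitly as the full subcategory of $\lim_\Delta \Sh_{\et}(\Sm_{X_\bullet})$ consisting of sections $F = (F_n, F(\alpha))$ for which each $F_n$ is hypercomplete on $\Sm_{X_n}$. This reduces to the fact that $\infty$-connective morphisms and Postnikov truncations in a limit of $\infty$-topoi are computed componentwise, using that the simplicial structure maps $\alpha^{*}$ are pullbacks of geometric morphisms and hence commute with truncations.

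Next I would check that $p_{*}F = \lim_\Delta p_{n,*} F_n$ sends sectionwise hypercomplete sections to hypercomplete sheaves on $\XC$. Each $p_{n,*}$ preserves hypercomplete objects because $p_n^{*}$ is a left exact left adjoint and therefore preserves $\infty$-connective maps; its right adjoint accordingly preserves the right orthogonal, which is precisely the hypercomplete subcategory. Since the latter is reflective and closed under small limits, the limit $\lim_\Delta p_{n,*} F_n$ stays hypercomplete. For the dual direction I would show that $p^{*}$ sends hypercomplete sheaves to sectionwise hypercomplete sections. Using the fully faithful identification $\Sh_{\et}(\Sm_{X_n}) \hookrightarrow \Sh_{\et}(\Sm_\XC)/X_n$ recalled in the proof of Proposition \ref{prop_nis_sheaves}, each $p_n^{*}$ is the \'etale slice $F \mapsto (F \times X_n \to X_n)$; slicing preserves hypercompleteness because $\infty$-connectivity is stable under base change in an $\infty$-topos. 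Combined with Proposition \ref{prop_nis_sheaves}, this produces the desired restricted equivalence.

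The main obstacle I anticipate is the first step, the componentwise description of hypercompleteness in the limit topos. While $n$-truncated objects and $n$-connective morphisms are well behaved under limits, hypercompleteness requires a further passage to the limit in $n$, and one must confirm that this passage commutes with the limit over $\Delta$. If this proves delicate, a safer fallback is to argue directly with \'etale hypercovers: any \'etale hypercover of $\XC$ pulls back to hypercovers of the $X_n$, and descent on $\XC$ can then be recovered via the \v{C}ech nerve of $|X_\bullet| \to \XC$, which is by hypothesis an \'etale-local equivalence.
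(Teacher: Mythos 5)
Your proposal follows essentially the same route as the paper: fully faithfulness carries over from Proposition \ref{prop_nis_sheaves}, $p_{n,*}$ preserves hypercomplete sheaves by the standard pushforward result (\cite[Proposition 6.5.2.13]{HTT09}), and $p_n^*$ is handled via the slicing argument. One small imprecision: $p_n^*$ is not literally the slice functor $F \mapsto (F \times X_n \to X_n)$; it is that slice composed with the fully faithful embedding $i_n^* \colon \Sh_\et(\Sm_{X_n}) \hookrightarrow \Sh_\et(\Sm_\XC)/X_n$, and the paper closes the gap by noting $p^*F = i_{\bullet,*}\, i_\bullet^*\, p^*F$ with $i_{\bullet,*}$ preserving hypercompleteness — worth spelling out, since fully faithfulness of $i_n^*$ alone does not transfer hypercompleteness back to $\Sh_\et(\Sm_{X_n})$.
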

\begin{proof}
    Recall, a sheaf $F \in \Sh_\et(\XC)$ is called hypercomplete if it is local with respect to the class of $\infty$-connective morphisms, and the hypercomplete \'etale topos
    $\Sh_\et(\XC)^\wedge$ is naturally equivalent to the full subcategory of hypercomplete sheaves in $\Sh_\et(\XC)$.
    
    Fully faithfulness of $p^*$ follows exactly as in the proof of Proposition \ref{prop_nis_sheaves}. Since $p_{n,*}$ preserves hypercomplete sheaves \cite[Proposition 6.5.2.13]{HTT09}, it suffices to show that $p_n^*$ does. In other words, we need to show that $p^* F \in \lim_\Delta \Sh_\et(\Sm_{X_n})^\wedge \subset \lim_\Delta \Sh_\et(\Sm_{X_n})$ for every  $F \in \Sh_\et(\Sm_\XC)^\wedge$.
    Since hypercompletion commutes with taking the slice topos, \cite[Proposition 6.3.5.14]{HTT09} it follows that $i^*_\bullet p^* F \in (\Sh_\et(\Sm_\XC)/X_\bullet)$ is hypercomplete, with the notation as above.
    Since $i^*_\bullet$ is an equivalence on cartesian sections, it follows that $p^* F = i_{\bullet,*}i^*_\bullet p^* F$ is also hypercomplete.
\end{proof}

\begin{proof}[Proof of Theorem \ref{thm-sh}]
Let $\XC$ be a Nisnevich-local stack with smooth Nisnevich cover $X \to \XC$ with \v{C}ech cover $p \colon X_\bullet \to \XC$.
To simplify notation, we give the proof in the Nisnevich case. The arguments are the same in the \'etale case using Lemma \ref{lem_cart_sheaves_etale} to deal with hypercompleteness.

We have to show that
\begin{equation*}
    u \colon \SH (\XC) \to \lim_\Delta \SH(X_\bullet) \simeq j_*\SH(\XC)
\end{equation*}
is an equivalence in $\Prc^L$ because the forgetful functor $\mathrm{\CAlg}(\mathrm{Pr}^L) \to \Prc^L$ preserves limits, \cite[Corollary 3.2.2.5, Lemma 3.2.2.6]{lurie_HA}.
We start by showing that
the equivalence from Proposition \ref{prop_nis_sheaves}
\begin{equation*}
    p^* \colon \Sh_\Nis(\Sm_\XC) \leftrightarrows \lim_\Delta \Sh_\Nis(\Sm_{X_\bullet}): p_*
\end{equation*}
induces an equivalence 
\begin{equation}\label{eq_hm_descent}
\Hm(\XC)_* \simeq \lim_\Delta \Hm(X_\bullet)_*.
\end{equation}
Since the projection maps $p_n \colon X_n \to \XC$ are smooth, $p^*$ preserves $\A^1$-invariant sheaves, and hence it suffices to show that
\[
p_* \colon \lim_\Delta \Hm(X_\bullet) \to \Sh_\Nis(\XC)
\]
preserves $\A^1$-invariant sheaves.
Let $F = F_\bullet \in \lim_\Delta \mathbf{H}(X_\bullet)$ and let $U \in \Sm_\XC$, then 
we calculate
\begin{align*}
    p_*(F)(U \times_\XC \A^1_\XC) &= \lim_\Delta F_n(p_n^*(U\times_\XC \A^1_\XC)) \\
    & = \lim_\Delta F_n(U \times_\XC \A^1_\XC \times_\XC X_n) \\
    &= \lim_\Delta F_n((U \times_\XC X_n) \times_{X_n} (X_n \times _\XC \A^1_{\XC}) \\
    &= \lim_\Delta F_n(U \times_\XC X_n) = p_*F(U),
\end{align*}
where we used $\A^1$-invariance of the $F_n$'s in the last step.

Furthermore, each $p_n^*$ is symmetric monoidal and $p^*_n \PR^1_\XC = \PR^1_{X_n}$,
hence, $p^*$ extends to a symmetric monoidal functor
\begin{equation*}
    p^* \colon \SH(\XC) \to  \lim_\Delta \SH(X_\bullet).
\end{equation*}
In other words, we want to show that the functor
\[
\SH \colon \Stk^\op \to \Prc^L
\]
satisfies Nisnevich descent. It is enough to check this on the smooth site $\Sm_\XC$ for every stack $\XC$.
For the moment we denote by $\Hm(-)'_*$, $\SH(-)'$ the functors $\XC \mapsto \Hm(\XC)$, $\XC \mapsto \SH(\XC)$, and $f \mapsto f_*$.

By Lemma \ref{lem_cyclic_perm} and \cite[Corollary 2.22]{robalo15}, the presheaf $\SH(-)$ is computed as the pointwise colimit in $\Prc^L$ of
\[
\Hm(-)_* \xrightarrow{\Sigma_T} \Hm(-)_*\xrightarrow{\Sigma_T} \Hm(-)_* \xrightarrow{\Sigma_T} \cdots. 
\]
Since $\Prc^R \simeq (\Prc^L)^\op$, it follows from \cite[Corollary 5.5.3.4, Theorem 5.5.3.18]{HTT09} that the copresheaf $\SH'(-)$ can be computed
as the pointwise limit in $\Cat_\infty$ of the diagram
\[
\cdots \xrightarrow{\Omega_{\PR^1}} \Hm(-)'_* \xrightarrow{\Omega_{\PR^1}} \Hm(-)'_*  \xrightarrow{\Omega_{\PR^1}} \Hm(-)'_*.
\]
Since $\Omega_{\PR^1} \circ f^* = f^* \Omega_{\PR^1}$ for smooth morphisms (by the smooth projection formula), it follows that the presheaf $\SH(-)$ is computed as the limit in $\Cat_\infty$ of the diagram
\[
\cdots \xrightarrow{\Omega_{\PR^1}} \Hm(-)_* \xrightarrow{\Omega_{\PR^1}} \Hm(-)_*  \xrightarrow{\Omega_{\PR^1}} \Hm(-)_*.
\]
The theorem follows now from the fact that $\Hm(-)_*$ is a Nisnevich sheaf by (\ref{eq_hm_descent}).
\end{proof}

\begin{cor}[Coefficients]\label{et/coefficients-version}
    Let $X_\bullet \to \XC$ be as above. Let $\Lambda$ be a ring spectrum. Let $\tau$ be either the \'etale or Nisnevich topology. 
    Then the natural map
    \begin{equation*}
        a^* \colon \SH_\tau(\XC,\Lambda) \to \lim_\Delta \SH_\tau(X_\bullet,\Lambda)
    \end{equation*}
    is an equivalence of symmetric monoidal $\infty$-categories.
\end{cor}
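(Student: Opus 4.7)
The plan is to reduce the statement to Theorem \ref{thm-sh} via a base change along the symmetric monoidal functor $\Sp \to \Mod_\Lambda(\Sp)$. First, I would identify $\SH_\tau(\XC,\Lambda)$ with $\Mod_\Lambda(\SH_\tau(\XC))$, where $\Lambda$ is viewed as a commutative algebra object of $\SH_\tau(\XC)$ through the unit of its symmetric monoidal structure; equivalently, $\SH_\tau(\XC,\Lambda) \simeq \SH_\tau(\XC) \otimes_\Sp \Mod_\Lambda(\Sp)$ taken in $\CAlg(\Prc^L_\stb)$. The pullbacks along the $p_n$ extend to $\Lambda$-linear symmetric monoidal functors, and the comparison map $a^*$ in question is obtained by applying $\Mod_\Lambda(-)$ to the comparison map $\SH_\tau(\XC) \to \lim_\Delta \SH_\tau(X_\bullet)$ of Theorem \ref{thm-sh}.

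The second step is to observe that $\mathcal{C} \mapsto \Mod_\Lambda(\mathcal{C})$ preserves limits in $\CAlg(\Prc^L_\stb)$, so that the already established equivalence $\SH_\tau(\XC) \xrightarrow{\sim} \lim_\Delta \SH_\tau(X_\bullet)$ of Theorem \ref{thm-sh} yields the desired equivalence after applying $\Mod_\Lambda(-)$ on both sides. This preservation follows from the Barr-Beck-Lurie theorem of \cite{lurie_HA}: each forgetful $U_n \colon \Mod_\Lambda(\SH_\tau(X_n)) \to \SH_\tau(X_n)$ is monadic with monad $\Lambda \otimes -$, and these monads organize into a coherent cosimplicial family whose limit is the monad $\Lambda \otimes -$ on $\lim_\Delta \SH_\tau(X_\bullet) \simeq \SH_\tau(\XC)$. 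Monadicity applied to the limiting monad then identifies $\lim_\Delta \Mod_\Lambda(\SH_\tau(X_\bullet))$ with $\Mod_\Lambda(\SH_\tau(\XC))$.

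Alternatively, I could bypass the abstract monadicity argument by re-running the proof of Theorem \ref{thm-sh} verbatim with $\Lambda$-coefficients from the start. Proposition \ref{prop_nis_sheaves} extends immediately to sheaves valued in any presentable $\infty$-category, since its proof relies only on general descent theory for $\infty$-topoi together with base change along smooth morphisms; the subsequent steps of pointing, motivic localization, and $\PR^1$-stabilization are universal constructions that are stable under the base change $\mathcal{C} \mapsto \mathcal{C} \otimes_\Sp \Mod_\Lambda(\Sp)$, because $\PR^1_\XC$ still satisfies the hypotheses of Lemma \ref{lem_cyclic_perm} after $\Lambda$-linearization. The hardest part in either route will be tracking the symmetric monoidal structure through the cosimplicial limit, but this is handled exactly as in the proof of Theorem \ref{thm-sh}, using that the forgetful functor $\CAlg(\Prc^L) \to \Prc^L$ preserves and detects limits.
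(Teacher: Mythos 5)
Your first approach --- identifying $\SH_\tau(\XC,\Lambda)$ with modules over the constant commutative algebra $\underline{\Lambda}$ in $\SH_\tau(\XC)$ and then passing to module categories across the limit of Theorem \ref{thm-sh} --- is precisely the route the paper takes; the paper's proof consists only of stating the identification $\SH_\tau(\XC,\Lambda) \simeq \mathbf{Mod}_{\underline{\Lambda}}(\SH_\tau(\XC))$ and leaving the rest implicit. Your monadicity argument (or the observation that $\Mod_\Lambda(-)$ preserves limits in $\CAlg(\Prc^L_\stb)$) supplies exactly the step the paper omits, so the proof is correct and essentially identical in spirit.
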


\begin{proof}
To deal with coefficients, we use the following identification
\[
\SH_\tau(\XC,\Lambda) \simeq \mathbf{Mod}_{\underline{\Lambda}}(\SH_\tau(\XC)),
\]
where $\underline{\Lambda}$ denotes the constant sheaf with value $\Lambda$.
\end{proof}

\begin{defn}
    Let $\Lambda$ be a commutative ring. The category of motives (without transfers) over $\XC$ with coefficients in $\Lambda$
    \[\DA_{\tau}(\XC,\Lambda):=\mathbf{Mod}_{H\Lambda}(\SH_\tau(\XC)),\]
    where $H\Lambda$ is the Eilenberg-Maclane spectrum of $\Lambda$.
\end{defn}

\begin{cor}\label{corollary-descent-hz-mod}
    Let $\XC$ be a smooth algebraic stack over a field $k$ with smooth Nisnevich covering $X \to \XC$.
    Let $H\Z^\mot_\XC \in \SH(\XC)$ denote Spitzweck's motivic cohomology spectrum \cite{Spitzweck_P1_spectrum}. 
    Then
    \[
    \mathbf{Mod}_{H\Z^\mot_\XC} \simeq \lim_{\Delta}\mathbf{Mod}_{H\Z^\mot_{X_\bullet}},
    \]
    where $X_\bullet \to \XC$ denotes the \v{C}ech nerve of $X \to \XC$.
\end{cor}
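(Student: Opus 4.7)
The plan is to combine the Nisnevich descent for $\SH$ established in Theorem \ref{thm-sh} with the standard fact from higher algebra that forming module categories commutes with limits of symmetric monoidal presentable $\infty$-categories. In spirit, the corollary is a variant of Corollary \ref{et/coefficients-version} where the ``coefficients'' are taken to be the non-constant ring spectrum $H\Z^\mot$, and the proof strategy is the same.

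First, I would apply Theorem \ref{thm-sh} to the smooth Nisnevich cover $p \colon X \to \XC$ to obtain an equivalence
\[
p^*\colon \SH(\XC) \xrightarrow{\sim} \lim_\Delta \SH(X_\bullet)
\]
in $\CAlg(\mathbf{Pr}^L)$. Writing $\pi \colon \XC \to \Spec k$ for the structure morphism, we have $H\Z^\mot_\XC \simeq \pi^* H\Z^\mot_k$, so for each $n$ the compatibility
\[
p_n^* H\Z^\mot_\XC \simeq (\pi \circ p_n)^* H\Z^\mot_k \simeq H\Z^\mot_{X_n}
\]
is essentially tautological from the definition (alternatively, one can invoke the framed description of $H\Z^\mot_\XC$ in Corollary \ref{corollary-framed-motivic-cohomology-spectrum} together with Nisnevich descent for the constant framed sheaf). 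Thus the equivalence above sends the commutative algebra $H\Z^\mot_\XC$ to the cocartesian section $(H\Z^\mot_{X_n})_{n\geq 0}$ of $\lim_\Delta \CAlg(\SH(X_\bullet))$.

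Second, I would invoke the general principle that the functor $A \mapsto \mathbf{Mod}_A$ from $\CAlg(\mathbf{Pr}^L)$ to $\widehat{\Cat}_\infty$ preserves limits (see \cite{lurie_HA}). Applied to the diagram $[n] \mapsto (\SH(X_n), H\Z^\mot_{X_n})$, together with step one, this immediately yields the desired equivalence
\[
\mathbf{Mod}_{H\Z^\mot_\XC} \simeq \lim_\Delta \mathbf{Mod}_{H\Z^\mot_{X_\bullet}}.
\]

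The only point requiring genuine attention, rather than formal manipulation, is the compatibility $p_n^* H\Z^\mot_\XC \simeq H\Z^\mot_{X_n}$ on the algebraic-space terms $X_n$ of the \v{C}ech nerve. Once $H\Z^\mot_\XC$ is defined via the structure map to $\Spec k$, this step is automatic; if one preferred a more intrinsic definition of $H\Z^\mot_\XC$ on the stack, one would have to verify smooth base change for Spitzweck's spectrum for algebraic spaces over $k$, which is where the main (though mild) technical work would lie.
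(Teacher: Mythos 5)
Your proof is correct and takes essentially the same route as the paper's, which is terser: the paper simply notes $H\Z^\mot_\XC := p^*H\Z^\mot$ (pullback from $\Spec\Z$ rather than $\Spec k$, but by transitivity of pullbacks this is the same thing) and then says ``Theorem~\ref{thm-sh} applied to the full subcategory $\mathbf{Mod}_{H\Z^\mot_\XC}$ of $\SH(\XC)$ implies the statement.'' Your version makes explicit the formal step the paper leaves implicit, namely that $A \mapsto \mathbf{Mod}_A$ preserves limits in $\CAlg(\mathbf{Pr}^L)$; this is also the mechanism underlying Corollary~\ref{et/coefficients-version}, as you observe.
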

\begin{proof}
    For any scheme or algebraic stack $\XC$, $H\Z^\mot_\XC:=p^* H\Z^\mot$ where $p\colon \XC\rightarrow\Spec \Z$ is the canonical map, and $H\Z^\mot\in \SH(\Z)$ is Spitzweck's motivic cohomology spectrum. Thus, Theorem \ref{thm-sh} applied to the full subcategory $\mathbf{Mod}_{H\Z^\mot_\XC}$ of $\SH(\XC)$ implies the statement.
\end{proof}

\begin{remark}
    Let $X$ be a regular scheme of finite Krull dimension over a field $k$ and $\Lambda$ a commutative ring in which the exponential characteristic of $k$ is invertible. By \cite[Theorem 3.1]{cisinski_deglise_integralmm}, we have an equivalence $\Mod_{H\Lambda_X}\simeq \DM(X,\Lambda)$ between the category of modules over $H\Lambda_X$ and Voevodsky triangulated category of $\Lambda$-motives with transfers over $X$. As transfers are poorly behaved over a general base, we will work with $\Mod_{H\Z^\mot_\XC}$-modules in the sequel.
\end{remark}

In this context, we have an adding transfer functor 
\[
a: \DA_{\et}(\XC,\Z)\rightarrow \Mod_{H\Z^{\mot,\et}_{\XC}}
\]
given by $- \otimes_{H\Z}H\Z^\mot_\XC$.
The following theorem is an extension of Ayoub's equivalence between $\DM_\et$ and $\DA_\et$ \cite[Theorem B.1]{ayoub_etreal} to regular algebraic stacks over a field.

\begin{theorem}[Motives with and without transfers]\label{thm-transfer-with-or-without}
    Let $\XC$ be a regular algebraic stack of finite Krull dimension over a field $k$ and $\Lambda$ be a commutative ring satisfying the conditions in \cite[Theorem B.1]{ayoub_etreal}. Then, adding transfers induces an equivalence
    \[
    a\colon  \DA_\et(\XC,\Lambda)\rightarrow \Mod_{H\Lambda^{\mot,\et}_{\XC}}.
    \]
\end{theorem}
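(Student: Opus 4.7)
The plan is to upgrade Ayoub's equivalence from regular schemes to regular stacks by étale descent, reducing termwise along the \v{C}ech nerve of a smooth Nisnevich cover. The two sides of the purported equivalence both satisfy \'{e}tale descent by Corollary \ref{et/coefficients-version} and Corollary \ref{corollary-descent-hz-mod} (or its evident \'{e}tale analogue, which uses $H\Lambda^{\mot,\et}_\XC = p^* H\Lambda^{\mot,\et}$ and Theorem \ref{thm-sh} applied to the full subcategory of $H\Lambda^{\mot,\et}_\XC$-modules). The functor $a = -\otimes_{H\Lambda} H\Lambda^{\mot,\et}_\XC$ is defined uniformly in $\XC$ and commutes with smooth pullback, so it assembles into a natural transformation of \'{e}tale cosheaves of $\infty$-categories on $\Sm_\XC$.

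More concretely, I would first invoke Theorem \ref{theorem_hall} to produce a smooth Nisnevich cover $p\colon X \to \XC$ with $X$ a scheme. Since $p$ is smooth and $\XC$ is regular of finite Krull dimension over $k$, the same is true of $X$; and each term $X_n$ of the \v{C}ech nerve $X_\bullet\to\XC$ is smooth over $\XC$, hence regular and of finite Krull dimension. (If one prefers to work with schemes rather than algebraic spaces in higher simplicial degrees, one can refine $X_n$ further by a smooth Nisnevich cover by a scheme via Theorem \ref{theorem_hall} and reapply descent; regularity and finite Krull dimension are preserved.) By Proposition \ref{prop_cechnerve_niscover} we have a Nisnevich equivalence $|X_\bullet|\to \XC$ in $\PSh(\Sm_\XC)$.

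Second, combining the étale-coefficient version of Theorem \ref{thm-sh} with the analogue of Corollary \ref{corollary-descent-hz-mod} yields equivalences
\begin{equation*}
\DA_\et(\XC,\Lambda)\xrightarrow{\sim}\lim_\Delta \DA_\et(X_\bullet,\Lambda),\qquad \Mod_{H\Lambda^{\mot,\et}_\XC}\xrightarrow{\sim}\lim_\Delta \Mod_{H\Lambda^{\mot,\et}_{X_\bullet}}.
\end{equation*}
The adding transfers functor $a$ commutes with the smooth pullbacks $p_n^*$ because $p_n^*$ is symmetric monoidal and $p_n^* H\Lambda^{\mot,\et}_\XC\simeq H\Lambda^{\mot,\et}_{X_n}$ (which is the content of the definition $H\Lambda^{\mot}_\XC = p^* H\Lambda^{\mot}$). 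Hence $a$ induces a morphism between these two simplicial diagrams, and passing to $\lim_\Delta$ recovers the functor $a_\XC$ at level $\XC$.

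Third, by \cite[Theorem B.1]{ayoub_etreal}, each $a_{X_n}\colon \DA_\et(X_n,\Lambda)\to \Mod_{H\Lambda^{\mot,\et}_{X_n}}$ is an equivalence of symmetric monoidal $\infty$-categories; taking limits in $\Prc^L$ yields the theorem. The main obstacle is verifying that the various terms in the \v{C}ech nerve fall within the scope of Ayoub's hypotheses, i.e.\ that they are regular of finite Krull dimension over $k$ and that the hypotheses on $\Lambda$ (notably the invertibility of the exponential characteristic) survive the base change; this is the reason for restricting to \emph{regular} stacks over a field. A secondary technical point is the coherent compatibility of the equivalences at each simplicial level with the face and degeneracy maps; but since $a$ is built from the symmetric monoidal structure, this compatibility is automatic from the functoriality of $\otimes_{H\Lambda}H\Lambda^{\mot,\et}$ under pullback and does not require any additional choices.
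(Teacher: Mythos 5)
Your proof is essentially the same as the paper's: reduce to the Čech nerve of a smooth Nisnevich cover by a scheme, note that each term is regular, apply the scheme-level equivalence termwise, and conclude by étale descent on both sides via Corollary \ref{et/coefficients-version} and (the étale analogue of) Corollary \ref{corollary-descent-hz-mod}. The only slip is a citation shortcut: \cite[Theorem B.1]{ayoub_etreal} alone gives the equivalence $\DA_\et(X_n,\Lambda)\simeq\DM_\et(X_n,\Lambda)$ between motives with and without transfers; to land in $\Mod_{H\Lambda^{\mot,\et}_{X_n}}$ one must additionally invoke \cite[Theorem 3.1]{cisinski_deglise_integralmm} (the identification $\DM(X_n,\Lambda)\simeq\Mod_{H\Lambda_{X_n}}$ for $X_n$ regular of finite Krull dimension), as the paper does. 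Your added remark about possibly refining the Čech nerve by schemes via Theorem \ref{theorem_hall} when the fiber products are only algebraic spaces is a reasonable technical precaution that the paper leaves implicit.
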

\begin{proof}
    Let $X\rightarrow \XC$ be a smooth Nisnevich cover. As $\XC$ is regular, so is $X$. Let $X_{\bullet}\rightarrow\XC$ be the associated \v{C}ech nerve. As each $X_i$ is regular, by \cite[Theorem 3.1]{cisinski_deglise_integralmm} and \cite[Theorem B.1]{ayoub_etreal},
    \[a_i\colon  \DA_\et(X_i,\Lambda)\rightarrow \Mod_{H\Lambda^{\et}_{X_i}}\]
    is an equivalence for each $i$. 
    By Proposition \ref{et/coefficients-version} and Proposition \ref{corollary-descent-hz-mod}, both $\DA_\et$ and $H\Lambda^{\et}$ satisfy \'{e}tale descent and we are done.
\end{proof}

We conclude this section with a computation of the \'etale cohomology of an algebraic stack. Consider the pair of
adjoint functors on the motivic categories,
\[\epsilon^*\colon  \SH(\XC)\rightleftarrows \SH_\et(\XC)\colon  \epsilon_*\]
induced by the morphism of sites $\epsilon \colon  (\Sm_\XC, \et) \rightarrow(\Sm_\XC, {\Nis})$.

The following result says that the ``\'{e}tale sheafification" $\epsilon_*\epsilon^* H\Z^\mot_\XC$ of $H\Z^\mot_\XC$ describes \'{e}tale motivic cohomology.

\begin{cor}
    Let $\XC$ be a smooth algebraic stack over a field $k$. Let $m$ be an integer coprime to the characteristic of $k$. Then we have a canonical isomorphism
    \[H^p_{\rm \acute{e}t}(\XC, \mu_m^{\otimes q}) \simeq [\mathbf{1},\Sigma^{p,q}\epsilon_*\epsilon^* H\Z^\mot/m]_{\SH(\XC)} =:  H_\et^{p,q}(\XC,\Z/m).\]
    where the right hand side denotes \'{e}tale motivic cohomology.
\end{cor}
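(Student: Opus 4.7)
The plan is to combine the $(\epsilon^*,\epsilon_*)$ adjunction, étale descent on both sides, and the classical scheme-level identification of étale motivic cohomology with Galois cohomology.

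First, I would apply the adjunction to rewrite
\[
[\mathbf{1},\Sigma^{p,q}\epsilon_*\epsilon^*H\Z^\mot/m]_{\SH(\XC)} \;\simeq\; [\mathbf{1},\Sigma^{p,q}\epsilon^*H\Z^\mot/m]_{\SH_\et(\XC)},
\]
so that both sides of the desired identification live naturally in the étale setting. Then I would pick a smooth Nisnevich cover $p\colon X\to\XC$ by a scheme (via Theorem \ref{theorem_hall}) and form the \v{C}ech nerve $p_\bullet\colon X_\bullet\to\XC$; since $\XC$ is smooth over $k$, each $X_n$ is a smooth $k$-scheme.

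Next, I would invoke descent on both sides. On the left, étale cohomology satisfies étale (hence smooth Nisnevich) descent, so $H^p_\et(\XC,\mu_m^{\otimes q})$ is computed as the cohomology of the totalization of the cosimplicial object $[n]\mapsto R\Gamma_\et(X_n,\mu_m^{\otimes q})$. On the right, Corollary \ref{et/coefficients-version} applied with $\Lambda=H\Z/m$ gives $\SH_\et(\XC,\Z/m)\simeq\lim_\Delta \SH_\et(X_\bullet,\Z/m)$; combined with Corollary \ref{corollary-descent-hz-mod} (and its étale analogue), the object $\epsilon^*H\Z^\mot_\XC/m$ restricts termwise to $\epsilon^*H\Z^\mot_{X_n}/m$, so the right-hand Hom-group is computed by the totalization of $[\mathbf{1},\Sigma^{p,q}\epsilon^*H\Z^\mot_{X_n}/m]_{\SH_\et(X_n)}$.

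Termwise, for each smooth $k$-scheme $X_n$ with $m$ prime to $\mathrm{char}(k)$, the identification
\[
H^p_\et(X_n,\mu_m^{\otimes q}) \;\simeq\; [\mathbf{1},\Sigma^{p,q}\epsilon^*H\Z^\mot/m]_{\SH_\et(X_n)}
\]
is the classical Suslin--Voevodsky / Beilinson--Lichtenbaum comparison, which can also be obtained through Bachmann's rigidity theorem (whose stack-level version is Theorem \ref{theorem-rigidity}). Assembling these scheme-level equivalences across the cosimplicial nerve yields the desired isomorphism over $\XC$.

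The main technical point is ensuring that the termwise comparison isomorphisms are sufficiently natural along the face and degeneracy maps of $X_\bullet$ to give an equivalence of cosimplicial objects, not just a termwise one; this follows from the naturality of Voevodsky's comparison and from the fact that both $\epsilon^*H\Z^\mot/m$ and $\mu_m^{\otimes *}$ are pulled back from $\Spec \Z$, so the comparison isomorphism between them is itself obtained by pullback and therefore commutes with the simplicial structure maps.
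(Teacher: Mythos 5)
This corollary is stated without proof in the paper; your argument is the natural one its descent machinery is designed to produce, and it is essentially correct. The one clarification worth making is that the descent identifications should be carried out at the level of mapping spectra (resp.\ $R\Gamma_\et$-complexes) rather than at the level of Hom-groups, since only then is ``totalization'' the right operation. One has
\[
\Map_{\SH_\et(\XC)}(\mathbf{1},\Sigma^{0,q}\epsilon^*H\Z^\mot/m)\;\simeq\; \lim_\Delta\Map_{\SH_\et(X_\bullet)}(\mathbf{1},\Sigma^{0,q}\epsilon^*H\Z^\mot/m)
\]
because mapping spectra in a limit of stable $\infty$-categories are limits of mapping spectra (using Theorem \ref{thm-sh} with $\tau=\et$), and likewise $R\Gamma_\et(\XC,\mu_m^{\otimes q})\simeq\lim_\Delta R\Gamma_\et(X_\bullet,\mu_m^{\otimes q})$ by cohomological descent for the Cartesian sheaf $\mu_m^{\otimes q}$ along the smooth hypercover $X_\bullet\to\XC$. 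One compares these cosimplicial spectra termwise via the Suslin--Voevodsky / rigidity identification, and only then passes to homotopy groups to obtain the stated isomorphism; you make this point implicitly in your final paragraph. For the coherence concern, a cleaner phrasing than the one you sketch is available: each $X_n$ lies in $\Sm_k$, and the scheme-level comparison is a natural equivalence of presheaves of spectra on $\Sm_k$ (both $\epsilon^*H\Z^\mot/m$ and the \'etale cohomology functor with $\mu_m^{\otimes q}$-coefficients being pulled back from $\Spec k$, along with the comparison map between them), so compatibility with all face and degeneracy maps is automatic and no separate assembly argument is required.
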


\subsection{Rigidity in \'etale motivic stable homotopy}

This subsection aims to generalize Bachmann's spectral rigidity theorem \cite{bachmann_rigidity} to algebraic stacks. The classical version of the theorem asserts that the canonical functor
\[
D(X_\et, \Z/n) \to \DM_\et(X, \Z/n)
\]
is an equivalence if $n \in \clg{O}(X)^\times$. The theorem was first proven in the case when $X$ is the spectrum of a field by Suslin-Voevodsky \cite[Theorem 4.4]{suslin_veov_singhom} \cite[Proposition 3.3.3]{Voev_2000}. Over a general base, the theorem was established by Ayoub for motives without transfers 
\cite[Theoreme 4.1]{ayoub_etreal} and for motives with transfers by Cisinski-Deglise \cite[Theorem 4.5.2]{cisinski_deglise_etalemotives}.
Furthermore, Bachmann proved the following spectral version of the theorem  \cite{bachmann_rigidity}, and \cite{bachmann_rigidity2}. Let $X$ be a scheme and let $\ell$ be a prime that is invertible on $X$. Consider the category of hypercomplete sheaves of spectra $(\Sh(X_\et),\Sp)^\wedge$ on the small \'etale site $X_\et$ of $X$. Then, the canonical functors
\[
\Sh(X_\et, \Sp)^\wedge \xrightarrow{\sigma} \SH^{S^1}_\et(X) \xrightarrow{\Sigma^{\infty}_{\G_m}} \SH_\et(X)
\]
become equivalences after $\ell$-completion (of a stable $\infty$-categories). The idea of the proof is to construct an invertible object (which is stable under pullbacks) $\Unit(1)_\ell \in (\Sh(X_\et, \Sp)^\wedge)^\wedge_\ell$ and an equivalence $\G_m \to \sigma (\Unit(1)_\ell)$ in $\SH^{S^1}_\et(X)^\wedge_\ell$.
Therefore, proving that $\sigma$ is an equivalence after $\ell$-completion also implies that $\Sigma^\infty_{\G_m}$ is an equivalence. With this at hand, we can reformulate and generalize the rigidity theorem as follows.

Since there is no good notion of a small \'etale site for a general algebraic stack, we consider the category of cartesian sheaves on the $\Sm_\XC$ as introduced in \cite{olsson_artin_stacks}. 

\begin{defn}
    A sheaf $F \in \Sh_\et(\Sm_\XC,\Sp)$ is \textit{cartesian} if for any morphism $f \colon U \to V$ in $\Sm_\XC$, the map of sheaves on $U_\et$ 
    \[
    f^{*} F_{V} \to F_{U}
    \]
    is an equivalence. We denote the full subcategory of cartesian sheaves by $\Sh_{\et,\cart}(\Sm_\XC,\Sp)$.
\end{defn}

If $\XC = X$ is a scheme (or a Deligne-Mumford stack), then 
the inclusion \[ i_X \colon X_\et \to \Sm_X\] induces an equivalence between
cartesian \'etale sheaves on $\Sm_X$ and sheaves on the small \'etale site of $X$. 
In particular, the functor $\Sh(X_\et,\Sp)^\wedge \to \SH^{S^1}_\et(X)$ from above factors through
the equivalence $i_X^* \colon \Sh(X_\et,\Sp)^\wedge \xrightarrow{\sim} \Sh_{\et,\cart}(\Sm_X,\Sp)^\wedge$.

\begin{theorem}[Rigidity]\label{theorem-rigidity}
    Let $\XC$ be an algebraic stack and let $\ell$ be a prime invertible on $\XC$. Let $F \in (\Sh_\et(\Sm_\XC, \Sp)^\wedge)^\wedge_\ell$ be an $\ell$-complete sheaf of spectra on $\Sm_\XC$. Then, the following are equivalent
    \begin{enumerate}
        \item $F$ is cartesian.
        \item $F$ is motivic.
    \end{enumerate}
    Moreover, $\G_m$ is $\otimes$-invertible in $\SH_\et^{S^1}(\XC)^\wedge_\ell$.
\end{theorem}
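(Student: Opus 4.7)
My plan is to reduce the theorem to Bachmann's spectral rigidity theorem \cite{bachmann_rigidity} for schemes via smooth Nisnevich descent. By Theorem \ref{theorem_hall}, I pick a smooth Nisnevich cover $p \colon X \to \XC$ by a scheme, and let $X_\bullet \to \XC$ denote its \v{C}ech nerve. Every term $X_n$ is a scheme on which $\ell$ is invertible, so Bachmann applies termwise on the simplicial diagram.

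The first step is to upgrade the descent statements of Lemma \ref{lem_cart_sheaves_etale} and Corollary \ref{et/coefficients-version} to sheaves of spectra after $\ell$-completion. Since each pullback $p_n^*$ is symmetric monoidal, exact, and preserves $\ell$-complete objects, I expect $\ell$-completion to commute with the cosimplicial limit, yielding equivalences
\[
(\Sh_\et(\Sm_\XC, \Sp)^\wedge)^\wedge_\ell \simeq \lim_\Delta (\Sh_\et(\Sm_{X_\bullet}, \Sp)^\wedge)^\wedge_\ell,
\]
\[
\SH_\et^{S^1}(\XC)^\wedge_\ell \simeq \lim_\Delta \SH_\et^{S^1}(X_\bullet)^\wedge_\ell.
\]

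Next, I would verify that both the cartesian condition and $\A^1$-invariance descend along $X_\bullet \to \XC$. The forward direction is immediate since $p_n^*$ preserves both properties. For the converse on cartesianness, the condition on a morphism $f \colon U \to V$ in $\Sm_\XC$ can be checked after pulling back along the smooth Nisnevich cover $V \times_\XC X \to V$, reducing it to the cartesian condition on $\Sm_X$; $\A^1$-invariance is local by the same token. Combined with the first step, this gives the equivalence of (1) and (2) termwise: $F$ is cartesian iff each $p_n^* F$ is, iff (by Bachmann applied to the scheme $X_n$) each $p_n^* F$ is motivic, iff $F$ is motivic.

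For the invertibility of $\G_m$, the descent equivalence $\SH_\et^{S^1}(\XC)^\wedge_\ell \simeq \lim_\Delta \SH_\et^{S^1}(X_\bullet)^\wedge_\ell$ realizes $\G_m$ on $\XC$ as the tuple $(\G_{m,X_n})_n$ in the limit. In a limit of symmetric monoidal $\infty$-categories, an object is invertible as soon as each of its components is, and by Bachmann each $\G_{m,X_n}$ is invertible in $\SH_\et^{S^1}(X_n)^\wedge_\ell$. Hence $\G_m$ is invertible on $\XC$. The main obstacle I anticipate is establishing the two displayed descent equivalences after $\ell$-completion; the remaining arguments reduce essentially formally to Bachmann's theorem.
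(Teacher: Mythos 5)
Your high-level strategy agrees with the paper's: reduce to Bachmann via a smooth Nisnevich cover $p \colon X \to \XC$, using that $\ell$-completion commutes with the cosimplicial limit (this is exactly Lemma~\ref{lem_pcompl_lim} in the paper). The forward direction (cartesian $\Rightarrow$ motivic) and the $\G_m$-invertibility argument are both fine and match the paper.

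There is, however, a genuine gap in your treatment of the converse (motivic $\Rightarrow$ cartesian). You claim that the cartesian condition for $f \colon U \to V$ in $\Sm_\XC$ ``can be checked after pulling back along the smooth Nisnevich cover $V \times_\XC X \to V$, reducing it to the cartesian condition on $\Sm_X$.'' This is circular. Base change gives a map $f' \colon U \times_\XC X \to V \times_\XC X$ in $\Sm_X$, and cartesianness of $p^*F$ tells you $(f')^* F_{V \times_\XC X} \to F_{U \times_\XC X}$ is an equivalence. But to conclude that $f^* F_V \to F_U$ is an equivalence, you must identify $q_U^*(f^* F_V \to F_U)$ with this map, where $q_U \colon U\times_\XC X \to U$, $q_V \colon V\times_\XC X \to V$ are the projections. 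That identification requires $q_U^* F_U \simeq F_{U\times_\XC X}$ and $q_V^* F_V \simeq F_{V\times_\XC X}$ — i.e.\ cartesianness of $F$ with respect to $q_U$ and $q_V$, which are precisely the kind of morphisms in $\Sm_\XC \setminus \Sm_X$ you are trying to handle. The cover $q_U$ is smooth, not \'etale, so you cannot invoke \'etale-local detection of equivalences on $U_\et$ either. The paper resolves this by first using hypercompleteness of $F$ to reduce to its homotopy sheaves (ordinary sheaves of groups), and then invoking Olsson's non-trivial descent result \cite[4.4, 4.7]{olsson_artin_stacks}, which asserts that for $1$-sheaves on the smooth-\'etale site, cartesianness can be tested on the simplicial resolution $X_\bullet$. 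You need both of these ingredients; the naive ``check after smooth base change'' does not close the loop.
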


\begin{remark}
    In the scheme case, the above theorem is simply a direct reformulation of Bachmann's rigidity theorem.
\end{remark}

\begin{proof}
Let $F \in (\Sh_\et(\Sm_\XC, \Sp)^\wedge)^\wedge_\ell$ be an $\ell$-complete sheaf of spectra on $\Sm_\XC$. Let $p \colon X \to \XC$ be a smooth cover with \v{C}ech nerve $X_\bullet \to \XC$.
Assume that $F$ is cartesian and we want to show that $F$ is motivic. 
By Theorem \ref{thm-sh}, it suffices to show that the pullback to $X_\bullet$ is motivic.
Cartesian sheaves are clearly stable under pullbacks, hence the pullback of $F$ to $X_\bullet$ is cartesian and the claim follows from rigidity for schemes.

Conversely, assume that $F$ is motivic and we want to show that $F$ is cartesian. Since $F$ is hypercomplete, it suffices to show that the homotopy sheaves of $F$ are cartesian. Thus, we may assume that $F$ is a sheaf of groups. In this case, \cite[4.4,4.7]{olsson_artin_stacks} implies that $F$ is cartesian if and only if its pullback to $X_\bullet$ is cartesian. Thus, the claim follows again from rigidity for schemes.

Finally, we need to show that $\G_m$ is $\otimes$-invertible in $\SH^{S^1}_\et(\XC)^\wedge_\ell$. Again, by Theorem \ref{thm-sh}, we may check this after pulling back along $X_\bullet \to X$, where we know that $\G_m$ is invertible.
\end{proof}

We implicitly used that $\ell$-completion of stable $\infty$-categories commutes with homotopy limits in $\mathrm{CAlg}(\Pr^L)$ and with $\G_m$-stabilization \cite[Lemma 26]{bachmann_real_etale}.

\begin{lemma}\label{lem_pcompl_lim}
    Let $\CC \colon K \to \mathrm{\CAlg}(\mathrm{Pr}^L)$ be a simplicial diagram of presentable stable symmetric monoidal $\infty$-categories. There exists a natural equivalence
    \[
    (\lim_{k\in K} \CC(k))^\wedge_\ell \to \lim_{k\in K} \CC(k)^\wedge_\ell
    \]
    of symmetric monoidal $\infty$-categories.
\end{lemma}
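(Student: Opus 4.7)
The plan is to identify both $(\lim_{k \in K} \CC(k))^\wedge_\ell$ and $\lim_{k \in K} \CC(k)^\wedge_\ell$ with the reflective symmetric monoidal subcategory of $\lim_{k \in K} \CC(k)$ consisting of coherent systems $(X_k)$ all of whose components are $\ell$-complete in $\CC(k)$.

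First, I would recall that the forgetful functors $\mathrm{CAlg}(\mathrm{Pr}^L) \to \mathrm{Pr}^L \to \Cat_\infty$ preserve limits by \cite[Corollary 3.2.2.5, Lemma 3.2.2.6]{lurie_HA}, so $\lim_k \CC(k)$ is concretely the $\infty$-category of coherent systems, and limits in it---in particular the limits $\lim_n (-)/\ell^n$ that compute $\ell$-completion---are formed componentwise. Hence a system $(X_k)$ is $\ell$-complete in $\lim_k \CC(k)$ if and only if each $X_k$ is $\ell$-complete in $\CC(k)$, and $(\lim_k \CC(k))^\wedge_\ell$ is exactly the reflective full subcategory of componentwise $\ell$-complete systems.

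Next, I would construct the comparison map. Since $\ell$-completion is a symmetric monoidal Bousfield localization (the class of $\ell$-equivalences is closed under tensoring with any object), each composite $(-)^\wedge_\ell \circ \pi_k \colon \lim_k \CC(k) \to \CC(k)^\wedge_\ell$ is a symmetric monoidal left adjoint, and these composites are compatible with the structure maps of the diagram $k \mapsto \CC(k)^\wedge_\ell$. By the universal property of the limit in $\mathrm{CAlg}(\mathrm{Pr}^L)$, they assemble into a symmetric monoidal left adjoint $\Phi \colon \lim_k \CC(k) \to \lim_k \CC(k)^\wedge_\ell$. Since the target consists of componentwise $\ell$-complete objects and is therefore itself $\ell$-complete, $\Phi$ inverts the unit maps of $\ell$-completion in the source and factors uniquely through $(\lim_k \CC(k))^\wedge_\ell$, yielding the natural map $\bar\Phi$ of the lemma.

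Finally, both the source and target of $\bar\Phi$ are identified with the same reflective subcategory of $\lim_k \CC(k)$, and both inherit their symmetric monoidal structure by completing the pointwise tensor product, so $\bar\Phi$ is an equivalence of symmetric monoidal $\infty$-categories. I expect the main technical step to be the componentwise identification of $\ell$-completion inside $\lim_k \CC(k)$; this reduces to the preservation of limits and of the cofiber sequences $X \xrightarrow{\ell^n} X \to X/\ell^n$ by the underlying-$\infty$-category functor $\mathrm{CAlg}(\mathrm{Pr}^L) \to \Cat_\infty$, after which everything else is essentially formal.
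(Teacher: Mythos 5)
Your plan rests on the assertion that limits in $\lim_{k\in K}\CC(k)$ --- in particular the inverse limits $\lim_n(-)/\ell^n$ computing $\ell$-completion --- are formed componentwise. This is false, and the gap cannot be repaired by appealing to preservation of limits by the forgetful functor $\CAlg(\mathrm{Pr}^L)\to\Cat_\infty$: that fact only says the limit $\infty$-\emph{category} is computed as in $\Cat_\infty$, not that limits \emph{inside} it are componentwise. Concretely, $\lim_K\CC$ is the $\infty$-category of cocartesian sections of the associated Grothendieck construction; the projections $\mathrm{ev}_k$ preserve colimits (they are morphisms in $\mathrm{Pr}^L$) but not limits in general, because the transition functors $\alpha^*$ are merely left adjoints, so the componentwise limit of cocartesian sections need not be cocartesian. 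As a result, an $\ell$-complete object of $\lim_K\CC$ need not have $\ell$-complete components. For instance, in a descendable faithfully flat descent situation $\Mod_A\simeq\lim_\Delta\Mod_{B^{\otimes_A(\bullet+1)}}$ with $A=\Z$ and $B=\prod_{\N}\Z$, the $\ell$-complete object $\Z_\ell\in\Mod_\Z$ is sent by $\mathrm{ev}_0=(-)\otimes_A B$ to $\Z_\ell\otimes_\Z\prod_\N\Z$, a proper submodule of $\prod_\N\Z_\ell$, hence not $\ell$-complete. So $(\lim_K\CC)^\wedge_\ell$ is not the full subcategory of componentwise-$\ell$-complete systems.

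The identification of the other side, $\lim_K\CC(k)^\wedge_\ell$, with the same putative subcategory has the dual defect. The transition functors in the diagram $k\mapsto\CC(k)^\wedge_\ell$ are $X\mapsto(\alpha^*X)^\wedge_\ell$ rather than $\alpha^*$ itself (for the same reason: $\alpha^*$ does not preserve $\ell$-complete objects), so $\lim_K\CC(k)^\wedge_\ell$ is not naturally a full subcategory of $\lim_K\CC$, and the final step where you identify source and target of $\bar\Phi$ with one and the same reflective subcategory does not go through. These two difficulties are exactly what the paper's proof circumvents by invoking the Mathew--Naumann--Noel cobar description $\CC^\wedge_\ell\simeq\lim_\Delta\Mod_\CC(\Unit/\ell^{\otimes(\bullet+1)})$, which re-expresses $\ell$-completion itself as a limit in $\CAlg(\mathrm{Pr}^L)$ of module categories over tensor powers of $\Unit/\ell$. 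Since $\Unit/\ell$ is preserved by every symmetric monoidal functor and $\Mod_{(-)}$ commutes with limits of presentable symmetric monoidal $\infty$-categories, the two limits can be interchanged; that is the key ingredient your outline is missing.
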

\begin{proof}
    Follows from the limit description $\CC^\wedge_p$ in \cite[Theorem 2.30]{mathew_neumann_noel_descent}. More precisely, for any presentable stable symmetric monoidal $\infty$-category $\CC$, the $\ell$-completion can be described as the homotopy limit
    \[
    C^\wedge_\ell \simeq \lim_\Delta \left( \Mod_\CC(\Unit/\ell) \rightrightarrows \Mod_\CC(\Unit/\ell \otimes \Unit/\ell) \triplerightarrow{} \cdots \right).
    \]
    The claim follows from the fact that symmetric monoidal functors preserve $\Unit/\ell$.
\end{proof}

\section{Framed motivic spectra}\label{section-framed}
The goal of this section is to extend the Reconstruction Theorem of \cite[Theorem 18]{Hoy_locthm21} to algebraic stacks (see Corollary \ref{corollary-framed-SH-limit}). 
The theorem was proved in \cite[Theorem 3.5.12]{mot_loop_spaces_21} for perfect fields and in \cite[Theorem 18]{Hoy_locthm21} for general schemes.
We start by recalling some facts about the $\infty$-category of framed correspondences 
$\Corr^\fr(\Sch)$ from \cite[Section 4]{mot_loop_spaces_21}. The category $\Corr^\fr(\Sch)$ 
is an $\infty$-category whose objects are $S$-schemes and whose mapping spaces are the 
$\infty$-groupoids $\Corrs^\fr(X,Y)$ defined in \textit{loc. cit.} Section 2.3.

\begin{defn}
    Let $X,Y \in \Sch$. A \textit{(tangentially) framed correspondence} from $X$ to $Y$ over $S$ consists of the following data
    \begin{enumerate}
        \item A span 
        \begin{equation*}
        \begin{tikzcd}
            & Z \ar[dr, "g"] \ar[dl, "f" swap]& \\
             X & & Y
        \end{tikzcd}
        \end{equation*}
        over $S$, where $f$ is finite syntomic.
        \item  A trivialisation $\tau \colon 0 \simeq \clg{L}_f$ of the cotangent complex of $f$ in the $K$-theory $\infty$-groupoid $K(Z)$.
    \end{enumerate}
    We denote by $\Corrs^\fr(X,Y)$ the $\infty$-groupoid of tangentially framed correspondences: 
    \[
    \Corrs^\fr(X,Y) := \colim_{X \leftarrow Z \rightarrow Y} \Map_{K(Z)}(0,\clg{L}_f),
    \]
    where the colimit is taken over all spans as above.
\end{defn}

Let $\XC$ be an algebraic stack.
We define the category of \textit{framed correspondences on $\XC$} as follows. 
We apply the construction from Section 4 
in \textit{loc. cit.} to the pair $(\Sm_\XC,\fsyn)$ and 
with the labeling functor sending a finite syntomic morphism $f$ to the groupoid of trivialisations of its cotangent sheaf. 
Denote the resulting $\infty$-category by $\Corr^\fr(\Sm_\XC)$.
The situation over a stack is slightly different from the situation over a scheme. In particular, there are the following differences from the case of schemes.
\begin{enumerate}
    \item The category $\Sm_\XC$ does not have a final object, hence we cannot speak about the pointed category $\Sm_{\XC,*}$. Nonetheless, non-empty fiber products exist and they are computed as in  \textit{loc. cit.} Section 4.3.
    \item The fiber product on $\Sm_\XC$ does not induce a symmetric monoidal structure on $\Corr^\fr(\Sm_\XC)$ as in the scheme case.
\end{enumerate}

There is a natural functor $\gamma \colon \Sm_\XC \to \Corr^\fr(\Sm_\XC)$, sending a morphism $f \colon X \to Y$ to the correspondence
\begin{equation*}
    \begin{tikzcd}
        & X \ar[dl,"\id" swap] \ar[dr,"f"] & \\
        X && Y.
    \end{tikzcd}
\end{equation*}
This functor is a bijection on objects, commutes with fiber products (if they exist), and induces a pair of adjoint functors
\begin{equation*}
    \hat{\gamma}_* \colon \PSh_\Sigma(\Corr^\fr(\Sm_\XC)) \leftrightarrows \PSh_\Sigma(\Sm_\XC) :\hat{\gamma}^*,
\end{equation*}
where $\PSh_\Sigma(-)$ denotes the full subcategory of presheaves taking finite coproducts to finite products. The functor $\hat{\gamma}_*$ is conservative, because isomorphisms of sheaves are detected on sections. Furthermore, note that $\hat{\gamma}_*$ naturally factors through the pointed presheaf category $\PSh(\Sm_\XC)_*$. 
Indeed, let $F \in  \PSh_\Sigma(\Corr^\fr(\Sm_\XC))$, then for any $X \in \Sm_\XC$, 
the empty correspondence from $X$ to $\varnothing$ induces a map $F(\varnothing) = * \to F(X)$. We denote the resulting functor by
\[
\gamma_* \colon  \PSh_\Sigma(\Corr^\fr(\Sm_\XC)) \to \PSh_\Sigma(\Sm_\XC)_*.
\]
Since $\hat{\gamma}^*$ preserves pointed sheaves, it restricts to a left adjoint to $\gamma_*$, which we denote by $\gamma^*$

\begin{defn}
    We say a presheaf on $\Corr^\fr(\Sm_\XC)$ is \textit{Nisnevich local}, respectively, \textit{$\A^1$-invariant}, if its restriction to $\Sm_\XC$ along $\gamma$ is. Denote the corresponding full subcategories by $\Sh_\Nis(\Corr^\fr(\Sm_\XC))$, $\PSh^{\A^1}(\Corr^\fr(\Sm_\XC))$. Moreover,
    the f\textit{ramed motivic homotopy category} $\Hm^\fr(\XC)$ of $\XC$ is defined as the full-subcategory of $\PSh_{\Sigma}(\Corr^\fr(\Sm_\XC))$ consisting of $\A^1$-invariant and Nisnevich local presheaves.
\end{defn}

\begin{theorem} \label{thm-framed-h}
    Let $\XC$ be an algebraic stack with smooth Nisnevich cover $p_0 \colon X \to \XC$. Then the natural functor
    \begin{equation*}
        p^* \colon \Hm^\fr(\XC) \to \lim_\Delta \Hm^\fr(X_\bullet)
    \end{equation*}
    is an equivalence of $\infty$-categories, where $p \colon X_\bullet \to \XC$ denotes the \v{C}ech nerve of $X \to \XC$.
\end{theorem}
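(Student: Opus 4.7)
The strategy is to follow the proof of Theorem \ref{thm-sh} in the framed setting, via the adjunction
\[ p^* \colon \PSh_\Sigma(\Corr^\fr(\Sm_\XC)) \rightleftarrows \lim_\Delta \PSh_\Sigma(\Corr^\fr(\Sm_{X_\bullet})) \colon p_*, \]
where $p^*$ is restriction along each $p_n \colon X_n \to \XC$ and, for $F = (F_n)$, the right adjoint is given by $p_*F(U) = \lim_\Delta F_n(U \times_\XC X_n)$. This formula preserves finite products (since finite products commute with limits), so $p_*$ is well-defined on $\PSh_\Sigma$.

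The main technical step is to establish a framed analogue of Proposition \ref{prop_nis_sheaves}: the above adjunction restricts to an equivalence between the subcategories $\Sh_\Nis(\Corr^\fr(\Sm_\XC))$ and $\lim_\Delta \Sh_\Nis(\Corr^\fr(\Sm_{X_\bullet}))$. Fully faithfulness amounts to showing $\id \to p_*p^*$ is an equivalence on Nisnevich-local objects. Since Nisnevich-locality is detected via the conservative forgetful functor $\gamma_*$ (which commutes with both $p^*$ and $p_*$ because the underlying site morphisms are the same), this reduces to the Nisnevich descent of $\gamma_* F$ along the cover $U \times_\XC X_\bullet \to U$ for each $U \in \Sm_\XC$, which is exactly the computation appearing in the proof of Proposition \ref{prop_nis_sheaves}. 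Essential surjectivity follows by the same descent-theoretic argument as in that proposition, once we know that the framed transfer structure descends along the \v{C}ech nerve.

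Next, I would verify that both $p^*$ and $p_*$ preserve $\A^1$-invariance. For $p^*$ this is immediate because each $p_n$ is smooth. For $p_*$, we repeat the calculation from the proof of Theorem \ref{thm-sh}:
\[ p_*F(U \times_\XC \A^1_\XC) = \lim_\Delta F_n(U \times_\XC \A^1_\XC \times_\XC X_n) = \lim_\Delta F_n(U \times_\XC X_n) = p_*F(U), \]
using $\A^1$-invariance of each $F_n \in \Hm^\fr(X_n)$. Combining the descent step with this $\A^1$-invariance check yields the desired equivalence $\Hm^\fr(\XC) \simeq \lim_\Delta \Hm^\fr(X_\bullet)$.

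The hard part will be the essential-surjectivity portion of the framed descent statement. While the underlying pointed sheaves glue by Proposition \ref{prop_nis_sheaves} (applied via $\gamma_*$), one must verify that a compatible system of framed transfer structures on the $X_n$ descends uniquely to a framed transfer structure on $\XC$. This requires unpacking the construction of $\Corr^\fr$ from Section 4 of \cite{mot_loop_spaces_21} and using that a framed correspondence --- a span with finite syntomic first leg together with a trivialization of its cotangent complex in $K$-theory --- is stable under pullback along the smooth maps $X_n \to \XC$, and that such data is local on the source in the Nisnevich topology.
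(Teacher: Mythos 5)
Your overall skeleton is the same as the paper's: form the adjunction $p^{\fr,*} \dashv p^\fr_*$ on $\PSh_\Sigma(\Corr^\fr(-))$, establish that the conservative forgetful functor $\hat{\gamma}_*$ commutes with both $p^{\fr,*}$ and $p^\fr_*$, use this to show the adjunction restricts to the motivic-local subcategories, and transport the problem to the unframed case. However, you then split the argument into a ``framed analogue of Proposition~\ref{prop_nis_sheaves}'' and separately flag essential surjectivity (descent of framed transfer structures) as the genuinely hard step requiring an analysis of how trivializations of cotangent complexes glue along the \v{C}ech nerve. This is where your route diverges from the paper and where you leave a real gap: you identify a hard problem but do not close it.

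The paper avoids that problem entirely. Once the adjunction is known to restrict to $\Hm^\fr(\XC) \rightleftarrows \lim_\Delta \Hm^\fr(X_\bullet)$, one simply checks that the unit and counit are equivalences. Applying $\hat{\gamma}_*$ (which commutes with $p^{\fr,*}$ and $p^\fr_*$) to the unit $F \to p^\fr_* p^{\fr,*} F$ produces exactly the unit $\hat{\gamma}_* F \to p_* p^* \hat{\gamma}_* F$ of the unframed adjunction $\Hm(\XC) \rightleftarrows \lim_\Delta \Hm(X_\bullet)$, which is an equivalence by Theorem~\ref{thm-sh}; conservativity of $\hat{\gamma}_*$ then forces the framed unit to be an equivalence, and the same argument applied to the counit (using that $(\hat{\gamma}_\bullet)_*$ is conservative, being a limit of conservative functors) handles essential surjectivity. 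Thus no separate descent statement for framed transfers or for framed Nisnevich sheaves is needed --- the issue you call ``the hard part'' is dissolved by arguing with unit and counit at the level of $\Hm^\fr$ rather than trying to replicate the scheme-level descent for $\Sh_\Nis(\Corr^\fr(-))$. You should also note that your claim ``$\gamma_*$ commutes with both $p^*$ and $p_*$ because the underlying site morphisms are the same'' deserves a concrete verification (the paper does it by a short section-wise computation); it is true but not tautological, since $\gamma$ alters the morphisms while fixing the objects.
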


\begin{proof}
As in Lemma \ref{lem_comp_functor}, we have a pair of adjoint functors
\begin{equation*}
    p^{\fr,^*} \colon \PSh_\Sigma(\Corr^\fr(\Sm_\XC)) \leftrightarrows \lim_\Delta 
    \PSh_\Sigma(\Corr^\fr(\Sm_{X_n}) : p^\fr_*.
\end{equation*}
Assume for now that $\hat{\gamma}_*$ commutes with $p^{\fr,*}$ and $p^\fr_*$, i.e., 
that the following diagram commutes
\begin{equation*}
\begin{tikzcd}
    \PSh_\Sigma(\Corr^\fr(\Sm_\XC)) \ar[d, shift left, "p^{\fr,*}"] \ar[r, "\hat{\gamma}_*"]
    &\PSh(\Sm_\XC) \ar[d, shift left,"p^*"] \\
    \lim_\Delta \PSh_\Sigma(\Corr^\fr(\Sm_{X_\bullet})) \ar[u, shift left, "p^\fr_{*}"] 
    \ar[r, "(\hat{\gamma}_{\bullet})_*"] & \lim_\Delta \PSh(\Sm_{X_\bullet}) \ar[u, shift left, "p_*"].
\end{tikzcd}
\end{equation*}
Under this assumption, it follows that $p^\fr_*$ preserves Nisnevich sheaves and $\A^1$-invariant presheaves, thus $p^{\fr,*} \dashv p^\fr_*$ restricts to an adjunction
\[
p^{\fr,*} \colon \Hm^\fr(\XC) \leftrightarrows \lim_\Delta \Hm^\fr(X_\bullet): p^\fr_*
\]
It remains to show that the unit and counit of the above adjunction are equivalences.
We give the argument for the unit map.
Let $F \in \Hm^\fr(\XC)$, then applying $\hat{\gamma}_*$ to 
\[
F \to p^\fr_* p^{\fr,*} F
\]
gives the unit map 
\[
\hat{\gamma}_*F \to p_* p^* \hat{\gamma}_*F
\]
of the adjunction $p^* \colon \Hm(\XC) \dashv \lim_\Delta \Hm(X_\bullet) : p_*$,
which is an equivalence by Theorem \ref{thm-sh}. 

Finally, let us prove that $\hat{\gamma}_*$ commutes with $p^{\fr,*}$ and $p^{\fr}_*$. Since $\hat{\gamma}_*$ commutes with limits, it suffices to prove that the diagram
\begin{equation*}
\begin{tikzcd}
    \PSh_\Sigma(\Corr^\fr(\Sm_\XC)) \ar[d, shift left, "p^{\fr,*}_n"] \ar[r, "\hat{\gamma}_*"]
    &\PSh(\Sm_\XC) \ar[d, shift left,"p^*"] \\
    \PSh_\Sigma(\Corr^\fr(\Sm_{X_n})) \ar[u, shift left, "p^\fr_{n,*}"] 
    \ar[r, "\hat{\gamma_n}_*"] & \PSh(\Sm_{X_n}) \ar[u, shift left, "p_*"]
\end{tikzcd}
\end{equation*}
commutes for every $n$. This follows by direct computation. Let $F \in \PSh(\Corr^\fr(\Sm_\XC))$ and let $U \in \Sm_{X_n}$, then
\begin{align*}
    \hat{\gamma}_{n,*} p^{\fr,*}_n F (U) = p^{\fr,*}_n F(U) = F(p_{n,\#} U)
    = \hat{\gamma}_* F(p_{n,\#}U) = p_n^* \hat{\gamma}_* F(U).
\end{align*}
and similarly, for $F \in \PSh(\Corr^\fr(\Sm_{X_n}))$ and $U \in \Sm_\XC$ we have
\[
\hat{\gamma}_* p^\fr_{n,*} F (U) = p^\fr_{n,*} F(U) = F(U\times_\XC X_n) =
\hat{\gamma}_{n,*} F(U \times_\XC X_n) = p_* \hat{\gamma}_{n,*} F (U).
\]
\end{proof}

The categories $\Hm^\fr(X_n)$ carry a symmetric monoidal structure \cite[Section 3.2]{mot_loop_spaces_21}, which is defined using the monoidal structure on $\Corr^\fr(\Sm_{X_n})$. We can use the equivalence
\[
\Hm^\fr(\XC) \cong \lim_\Delta \Hm^\fr(X_\bullet)
\]
to endow $\Hm^\fr(\XC)$ with a symmetric monoidal structure $\otimes$ such that the above equivalence becomes symmetric monoidal. With this monoidal structure, the functor 
\[
\gamma^* \colon \Hm(\XC)_* \to \Hm^\fr(\XC)
\]
also becomes symmetric monoidal. For example, for $X,Y \in \Sm_\XC$, we have an equivalence
\[
\gamma^*(X_+) \otimes \gamma^*(Y_+) \simeq \gamma^*((X\times_\XC Y)_+)
\]
in $\Hm^\fr(\XC)$.

\begin{defn}
Let $\XC$ be as above. Let $T:= \A^1/\A^1 - 0 \in \Hm(\XC)_*$ denote the Tate object. The \textit{stable framed motivic homotopy category} of $\XC$ is defined as the $T^\fr := \gamma^*(T)$-stabilisation of $\Hm^\fr(\XC)$,
\[
\SH^\fr(\XC) := \Hm^\fr(\XC)\left[(T^\fr)^{-1}\right].
\]
\end{defn}

\begin{cor}\label{corollary-framed-SH-limit}
    Let $X_\bullet \to \XC$ be as in Theorem \ref{thm-framed-h}. Then the functor $p^{\fr,*}$ induces an equivalence
    \begin{equation*}
        p^{\fr,*} \colon \SH^\fr(\XC) \to \lim_\Delta \SH^\fr(X_{\bullet})
    \end{equation*}
    of symmetric monoidal $\infty$-categories. Moreover, the functor $\gamma^*$ induces an equivalence
    \begin{equation*}
        \gamma^{*} \colon \SH(\XC) \to \SH^\fr(\XC)
    \end{equation*}
    of symmetric monoidal $\infty$-categories.
\end{cor}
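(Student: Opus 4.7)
The plan is to bootstrap from the unstable equivalence of Theorem \ref{thm-framed-h} together with the scheme-level Reconstruction Theorem \cite[Theorem 18]{Hoy_locthm21}, repeating the stabilization-commutes-with-limits strategy used at the end of the proof of Theorem \ref{thm-sh}.

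For the first equivalence, I would first observe that since the symmetric monoidal structure on $\Hm^\fr(\XC)$ was \emph{defined} by transport along the equivalence of Theorem \ref{thm-framed-h}, the Tate object $T^\fr = \gamma^*(T)$ corresponds to the tuple $(T^\fr_{X_n})_n$, and in particular its pullback along each $p_n \colon X_n \to \XC$ is the framed Tate object on $X_n$. Then, mimicking the argument for $\SH$ in the proof of Theorem \ref{thm-sh}, I would write $\SH^\fr(-)$ on $\Sm_\XC$ as the pointwise limit in $\Cat_\infty$ of the tower
\[
\cdots \xrightarrow{\Omega_{T^\fr}} \Hm^\fr(-) \xrightarrow{\Omega_{T^\fr}} \Hm^\fr(-) \xrightarrow{\Omega_{T^\fr}} \Hm^\fr(-),
\]
using that $\Omega_{T^\fr}$ commutes with the smooth pullbacks $p^{\fr,*}_n$ via the framed projection formula. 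Combining this with Theorem \ref{thm-framed-h}, the two limits can be exchanged and yield $\SH^\fr(\XC) \simeq \lim_\Delta \SH^\fr(X_\bullet)$. The symmetric monoidal upgrade is then automatic, because $T^\fr$ is symmetric so the stabilization takes place in $\mathrm{CAlg}(\mathrm{Pr}^L)$, and the forgetful functor $\mathrm{CAlg}(\mathrm{Pr}^L) \to \mathrm{Pr}^L$ preserves limits.

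For the second equivalence, I would assemble $\gamma^*$ into a natural transformation $\gamma^*_\bullet \colon \SH(X_\bullet) \to \SH^\fr(X_\bullet)$ of simplicial diagrams in $\mathrm{CAlg}(\mathrm{Pr}^L)$, which is a levelwise equivalence by the Reconstruction Theorem \cite[Theorem 18]{Hoy_locthm21}. Taking $\lim_\Delta$ and stacking the identifications already available, one obtains the chain
\[
\SH(\XC) \xrightarrow{\sim} \lim_\Delta \SH(X_\bullet) \xrightarrow[\lim_\Delta \gamma^*_\bullet]{\sim} \lim_\Delta \SH^\fr(X_\bullet) \xleftarrow{\sim} \SH^\fr(\XC),
\]
of symmetric monoidal equivalences, where the outer equivalences come from Theorem \ref{thm-sh} and the first part of this corollary, respectively. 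Naturality of $\gamma^*$ identifies this composite with the functor $\gamma^* \colon \SH(\XC) \to \SH^\fr(\XC)$.

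The main obstacle I expect to be mostly bookkeeping: verifying that $\gamma^*$ commutes with smooth pullbacks at the unstable level so that it genuinely assembles into a natural transformation of cosimplicial diagrams, and unwinding that the image of $T^\fr$ under the equivalence of Theorem \ref{thm-framed-h} is indeed $(T^\fr_{X_n})$. Both points are essentially built into our construction of the monoidal structure on $\Hm^\fr(\XC)$ and of $\gamma^*$ from the labeling data on $\Sm_\XC$, but they merit a brief explicit verification before invoking the limit-exchange argument.
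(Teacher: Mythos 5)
Your proposal matches the paper's proof in both structure and substance: the first equivalence is obtained by replaying the stabilization-commutes-with-limits argument from the proof of Theorem \ref{thm-sh} on top of Theorem \ref{thm-framed-h}, and the second is deduced from the commutative square comparing $\gamma^*$ on $\XC$ with the levelwise $\gamma^*_\bullet$ on $X_\bullet$, where the bottom arrow is an equivalence by \cite[Theorem 18]{Hoy_locthm21}. The bookkeeping points you flag (that $T^\fr$ restricts along $p_n$ to the framed Tate object on $X_n$, and that $\gamma^*$ is compatible with smooth pullbacks at the unstable level) are indeed the only things to verify, and they are built into the transported monoidal structure on $\Hm^\fr(\XC)$ as you say.
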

\begin{proof}
    The same argument as in the proof of Theorem \ref{thm-sh} applied to Theorem \ref{thm-framed-h} shows that
    $p^{\fr,*}$ induces an equivalence of symmetric monoidal categories
    \begin{equation*}
        p^{\fr,*} \colon \SH^\fr(\XC) \to \lim_\Delta \SH^\fr(X_\bullet).
    \end{equation*}
    Moreover, there is a commutative diagram of the form
    \begin{equation*}
    \begin{tikzcd}
        \SH(\XC) \ar[r,"\gamma^*"] \ar[d] & \SH^\fr(\XC) \ar[d] \\
        \lim_\Delta \SH(X_\bullet) \ar[r, "\gamma^*_\bullet"] & \lim_\Delta \SH^\fr(X_\bullet),
    \end{tikzcd}
    \end{equation*}
    where the vertical arrows are the equivalences and the bottom arrow is an equivalence by \cite[Theorem 18]{Hoy_locthm21}. 
\end{proof}

The following result extends \cite[Lemma 16]{Hoy_locthm21} to algebraic stacks. It states that the pullback functor along a morphism of algebraic stacks respects forgetting framed transfers.

\begin{cor}\label{corollary-pullback-commutes-with-forgetting-transfers}
    Let $f: \XC \rightarrow \YC$  be a morphism of algebraic stacks. Then the following diagram commutes,
    \begin{center}
    \begin{tikzcd}
        \Hm^\fr(\YC)\arrow[r,"\gamma_*"]\arrow[d,"f^*"] & \Hm(\YC)\arrow[d,"f^*"]\\
        \Hm^\fr(\XC)\arrow[r,"\gamma_*"] & \Hm(\XC)
    \end{tikzcd}
    \end{center}
    Similarly for $\SH$.
\end{cor}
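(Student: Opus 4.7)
The plan is to reduce to the scheme case \cite[Lemma 16]{Hoy_locthm21} via descent along compatible smooth Nisnevich covers of $\XC$ and $\YC$.

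First I would construct compatible covers by simplicial schemes. By Theorem \ref{theorem_hall}, pick a smooth Nisnevich cover $q_0 \colon Y \to \YC$ by a scheme. The base change $\XC \times_\YC Y \to \XC$ is a smooth Nisnevich cover by an algebraic stack, which admits a further smooth Nisnevich cover $p_0 \colon X \to \XC \times_\YC Y$ by a scheme. The composition $X \to \XC$ is then a smooth Nisnevich cover by a scheme, equipped with a canonical map $X \to Y$ making a $2$-commutative square over $f$. Passing to \v{C}ech nerves produces a morphism of simplicial schemes $f_\bullet \colon X_\bullet \to Y_\bullet$ sitting over $f$.

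Next, Theorem \ref{thm-framed-h} and Theorem \ref{thm-sh} identify $\Hm^\fr(-)$ and $\Hm(-)_*$ on $\XC$ and $\YC$ with the corresponding $\lim_\Delta$'s along the \v{C}ech nerves, and the computation inside the proof of Theorem \ref{thm-framed-h} that $\hat{\gamma}_*$ commutes with both $p^{\fr,*}$ and $p^\fr_*$ shows that these descent equivalences intertwine $\gamma_*$. By naturality of these equivalences in the base, the pullback $f^* \colon \Hm^\fr(\YC) \to \Hm^\fr(\XC)$ is identified, under the descent equivalence, with the limit of the schematic pullbacks $f_n^* \colon \Hm^\fr(Y_n) \to \Hm^\fr(X_n)$, and analogously for the unframed versions. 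For each $n$, \cite[Lemma 16]{Hoy_locthm21} supplies a natural equivalence $\gamma_* \circ f_n^* \simeq f_n^* \circ \gamma_*$, and assembling these level-wise equivalences under $\lim_\Delta$ gives the desired commutativity. The $\SH$ version follows immediately by passing to $T^\fr$-stabilization, since $\gamma_*$, $f^*$, and stabilization all commute with one another.

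The main technical obstacle is the naturality-in-the-base statement: that the descent equivalences of Theorem \ref{thm-framed-h} and Theorem \ref{thm-sh} assemble into a pseudo-natural transformation along $f_\bullet$, so that $f^*$ really is computed level-wise after descent. This is formal but requires careful $2$-categorical bookkeeping of the \v{C}ech nerve construction and of the compatibility of the framed correspondence $\infty$-categories with pullback along morphisms of schemes.
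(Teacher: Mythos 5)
Your proposal is correct and takes essentially the same approach as the paper, whose proof is a one-line citation of Theorem \ref{thm-framed-h}, Corollary \ref{corollary-framed-SH-limit}, and \cite[Lemma 16]{Hoy_locthm21}. You have simply spelled out the details the paper leaves implicit (the construction of compatible \v{C}ech covers over $f$, the role of the $\hat{\gamma}_*$-compatibility from the proof of Theorem \ref{thm-framed-h}, and the naturality-in-the-base bookkeeping).
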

\begin{proof}
    This follows from Theorem \ref{thm-framed-h}, Corollary \ref{corollary-framed-SH-limit} and \cite[Lemma 16]{Hoy_locthm21}.
\end{proof}

One consequence of having a model for $\SH^\fr(\XC)$ for an algebraic stack $\XC$ is that the motivic cohomology spectrum can be described as the framed suspension of the constant sheaf $\Z$, exactly as in the case of schemes.

\begin{cor}[The motivic cohomology spectrum over $\XC$]\label{corollary-framed-motivic-cohomology-spectrum}
    Let $\XC$ be an algebraic stack. Consider the constant sheaf $\Z_\XC$ on $\Sm_\XC$. Then $\gamma_*\Sigma^\infty_{T,\fr}\Z_\XC\simeq H\Z^\mot_\XC$.
\end{cor}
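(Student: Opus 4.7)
The strategy is to deduce the equivalence from the scheme case by Nisnevich descent. Let $p_0\colon X\to\XC$ be a smooth Nisnevich cover by a scheme, provided by Theorem \ref{theorem_hall}, and let $p\colon X_\bullet\to\XC$ denote its \v{C}ech nerve. By Theorem \ref{thm-sh}, the pullback functors induce an equivalence $\SH(\XC)\simeq\lim_{\Delta}\SH(X_\bullet)$, so it suffices to exhibit compatible equivalences after pulling back both sides to each $X_n$.

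For the right-hand side, one has $p_n^* H\Z^\mot_\XC\simeq H\Z^\mot_{X_n}$ directly from the definition of $H\Z^\mot_\XC$ as the pullback of $H\Z^\mot\in\SH(\Z)$ along the structure map $\XC\to\Spec\Z$ (cf.\ Corollary \ref{corollary-descent-hz-mod}). For the left-hand side, I would combine three ingredients. First, by Corollary \ref{corollary-pullback-commutes-with-forgetting-transfers}, pullback commutes with the forgetful functor $\gamma_*$, so $p_n^*\gamma_*\simeq\gamma_*p_n^{\fr,*}$. Second, $p_n^{\fr,*}$ is symmetric monoidal and colimit-preserving, and sends the stabilization object $T^\fr=\gamma^*(T)$ to $T^\fr$, so it commutes with the framed $T$-suspension spectrum functor $\Sigma^\infty_{T,\fr}$. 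Third, the constant sheaf $\Z_\XC$ visibly pulls back to $\Z_{X_n}$. Chaining these yields
\[
p_n^*\bigl(\gamma_*\Sigma^\infty_{T,\fr}\Z_\XC\bigr)\simeq\gamma_*\Sigma^\infty_{T,\fr}\Z_{X_n}.
\]

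It then remains to invoke the scheme-level equivalence $\gamma_*\Sigma^\infty_{T,\fr}\Z_{X_n}\simeq H\Z^\mot_{X_n}$, which is known from \cite{mot_loop_spaces_21} and \cite[Theorem 18]{Hoy_locthm21}, identifying Spitzweck's motivic cohomology spectrum with the framed suspension spectrum of the constant sheaf $\Z$. The main obstacle is to check that these pointwise equivalences are natural in $X_n$, so that they assemble into a morphism in $\lim_\Delta\SH(X_\bullet)$. Since both $\gamma_*\Sigma^\infty_{T,\fr}\Z_{(-)}$ and $H\Z^\mot_{(-)}$ are obtained by pullback from $\Spec\Z$, and the scheme-level equivalence is natural in the base, this compatibility should follow from transporting the absolute equivalence $\gamma_*\Sigma^\infty_{T,\fr}\Z\simeq H\Z^\mot$ in $\SH(\Z)$ along the diagram $X_\bullet\to\Spec\Z$. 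Once verified, descent via Theorem \ref{thm-sh} transports the equivalence on $X_\bullet$ back to the required equivalence $\gamma_*\Sigma^\infty_{T,\fr}\Z_\XC\simeq H\Z^\mot_\XC$ in $\SH(\XC)$.
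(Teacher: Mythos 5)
Your argument is correct, but it contains a redundant step, and the paper's proof is shorter because it leaps directly to the observation you only arrive at in your final paragraph. You frame the proof as a descent argument: reduce via Theorem \ref{thm-sh} to the levels $X_n$ of a \v{C}ech nerve, prove the equivalence there, then check coherence of the descent data. To handle the coherence you then observe that both sides are obtained by pullback of the absolute equivalence $\gamma_*\Sigma^\infty_{T,\fr}\Z\simeq H\Z^\mot$ in $\SH(\Z)$ along the maps $X_\bullet\to\Spec\Z$. Once you have that observation in hand, the detour through the \v{C}ech nerve is no longer needed: just pull back the absolute equivalence along the structure map $f\colon\XC\to\Spec\Z$. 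The three commutation facts you already listed do all the work, now applied to $f$ rather than to each $p_n$: pullback commutes with the framed suspension functor $\Sigma^\infty_{T,\fr}$ (because $f^{\fr,*}$ is symmetric monoidal, colimit-preserving, and fixes $T^\fr$), the constant sheaf $\Z$ pulls back to $\Z_\XC$, and Corollary \ref{corollary-pullback-commutes-with-forgetting-transfers} gives $f^*\gamma_*\simeq\gamma_* f^{\fr,*}$. This is exactly the paper's proof. The descent was already absorbed into the proof of Corollary \ref{corollary-pullback-commutes-with-forgetting-transfers}, which is why you don't need to re-run it here. Your version is still valid, but it proves the intermediate descent step only to bypass it at the end.
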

\begin{proof}
    The corollary follows from the fact that $\gamma_*$ and the framed suspension functor respect pullbacks. Let $\Sigma^\infty_{T,\fr}\Z_\XC \in \SH^\fr (\XC)$ be the framed suspension of the constant sheaf $\Z$ on $\Sm_\XC$.

    Notice that $H\Z^\mot_\XC := f^*H\Z^\mot$ along the structure map $f\colon  \XC\rightarrow \Spec (\Z)$, where $H\Z^\mot$ is Spitzweck's motivic cohomology spectrum (see \cite{Spitzweck_P1_spectrum}). Indeed, by \cite[Theorem 21]{Hoy_locthm21}, the functor $\gamma_*\colon  \SH^\fr (\Z)\rightarrow \SH(\Z)$
    induces an isomorphism $\gamma_*\Sigma^\infty_{T,\fr}\Z \simeq H\Z^\mot$. Thus, $f^*\gamma_* \Sigma^\infty_{T,\fr}\Z \simeq H\Z^\mot_\XC$.
    On the other hand, $\gamma_* f^*\Sigma^\infty_{T,\fr}\Z\simeq \gamma_* \Sigma^\infty_{T,\fr}f^*\Z\simeq \gamma_* \Sigma^\infty_{T,\fr}\Z_\XC$. Corollary \ref{corollary-pullback-commutes-with-forgetting-transfers} now gives us the required equivalence.
\end{proof}

\section{Application to Deligne-Mumford stacks}\label{section-coarse-space-morphism}
Let $G$ be a finite group of order $N$. Let $S$ be a scheme such that $N \in \clg{O}_S^\times$ and let $X$ be a quasi-projective $S$-scheme. 
Assume that $G$ acts on $X$ such that the quotient stack $[X/G]$ is smooth.
By \cite[Theorem 6.12]{rydh_geo_quot}, there exists a scheme $Y$, the coarse space of $[X/G]$, and a morphism $\pi \colon [X/G] \to Y$ which is initial among morphisms to algebraic spaces and a universal homeomorphism on the underlying topological space.

\begin{theorem}\label{thm-quot-stack}
    Let $X$ and $G$ be as above. Then the induced map
    \begin{equation*}
        \pi^* \colon \SH_\et(Y, \Z[1/N]) \to \SH_\et([X/G],\Z[1/N])
    \end{equation*}
    is fully faithful.
\end{theorem}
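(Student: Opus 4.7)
The strategy is to reduce the statement, via the \'etale descent of Corollary \ref{et/coefficients-version}, to a Galois-descent calculation for $\SH_\et(-,\Z[1/N])$, and then attack this calculation using the rigidity theorem (Theorem \ref{theorem-rigidity}).

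Because $N$ is invertible on $S$, the finite group $G$ is \'etale, so the quotient map $q\colon X \to [X/G]$ is an \'etale surjection (it is a $G$-torsor); in particular $|X_\bullet| \to [X/G]$ is an \'etale-local equivalence, where $X_\bullet$ is the \v{C}ech nerve of $q$. The $n$-th term splits as $X_n \simeq \bigsqcup_{G^n} X$, so applying the \'etale version of Corollary \ref{et/coefficients-version} with $\Lambda = \Z[1/N]$ and recognizing the resulting cosimplicial diagram as the cobar construction for the $G$-action gives
\[
\SH_\et([X/G], \Z[1/N]) \;\simeq\; \SH_\et(X, \Z[1/N])^{hG},
\]
an equivalence of symmetric monoidal stable $\infty$-categories. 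Write $p := \pi \circ q \colon X \to Y$. Under this equivalence $\pi^*$ corresponds to the functor $A \mapsto p^*A$ with its canonical (trivial) $G$-equivariant structure, and its right adjoint $\pi_*$ to $(F,\sigma) \mapsto (p_*F)^{hG}$. Hence the theorem amounts to showing that the unit morphism
\[
A \;\longrightarrow\; \bigl(p_* p^* A\bigr)^{hG}
\]
is an equivalence for every $A \in \SH_\et(Y,\Z[1/N])$.

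Since $p$ is finite, $p_*$ commutes with \'etale base change on $Y$, and since $(-)^{hG}$ commutes with limits, the formation of $(p_*p^*A)^{hG}$ is compatible with \'etale pullback on $Y$; combined with \'etale descent on $Y$ (Corollary \ref{et/coefficients-version}), this reduces the check to an \'etale-local statement. Locally, slice-type theorems for tame group actions allow one to assume that $(X,G) = (G \times_H U, G)$ for a stabilizer subgroup $H \leq G$ at a chosen point, and a further reduction replaces $G$ by $H$ acting on $U$ with a fixed closed point. In this local regime, the unit statement is a group-cohomological descent question: the rigidity theorem (Theorem \ref{theorem-rigidity}) identifies $\SH_\et(-,\Z[1/N])^\wedge_\ell$ (for any $\ell \nmid N$) with the $\ell$-complete hypercomplete \'etale topos of spectra, in which the statement is classical, since $H^{>0}(BH, \Z_\ell) = 0$ whenever $\ell \nmid |H|$, so $(-)^{hH}$ is exact and agrees with ordinary $H$-invariants and the Hochschild--Serre spectral sequence collapses. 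The rational piece is handled by the Ayoub-type comparison of Theorem \ref{thm-transfer-with-or-without} together with the fact that universal homeomorphisms induce equivalences on rational Nisnevich motives. The principal technical obstacle is assembling these $\ell$-adic (for all $\ell \nmid N$) and rational pieces into the $\Z[1/N]$-statement and verifying that the local slice reductions glue along the \'etale cover of $Y$; once in hand, these yield the theorem.
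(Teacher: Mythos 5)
Your overall strategy is aligned with the paper's at the top level: both reduce via \'etale descent (Corollary \ref{et/coefficients-version}) along $q\colon X\to[X/G]$ to a $G$-equivariant descent statement, identifying $\SH_\et([X/G],\Z[1/N])$ with the homotopy fixed points $\SH_\et(X,\Z[1/N])^{hG}$ (the paper phrases this as $\lim_\Delta\SH_\et(X\times G^\bullet,\Z[1/N])$, which is the same cobar construction), and then need to show the unit $A\to(p_*p^*A)^{hG}$ is an equivalence. Where you diverge, however, is substantial, and the divergence is what creates real gaps.

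The paper finishes in one step: since $\SH_\et(Y,\Z[1/N])$ is generated by strongly dualizable objects, it suffices to compute $\Hom(\Unit,\pi^*F)$, and the cobar limit computes $(q_0^*F(X))^{hG}$. Because $|G|=N$ is already invertible in the coefficient ring, the homotopy fixed point spectral sequence collapses \emph{on the nose} (higher group cohomology with $\Z[1/N]$-module coefficients vanishes), so $(-)^{hG}$ reduces to ordinary $G$-invariants, and $(q_0^*F(X))^G=F(Y)$ by Galois descent for the finite quotient $X\to Y$. No rigidity, no slice theorems, no arithmetic fracture. The invertibility hypothesis, which you treat as something to be exploited prime-by-prime, already does all the work globally.

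Your detour through rigidity and $\ell$-adic/rational fracture has concrete problems. First, Theorem \ref{theorem-rigidity} requires $\ell$ invertible \emph{on the base}, not merely $\ell\nmid N$. If $S$ has residue characteristic $p$ and $p\nmid N$, then $p$ is not inverted in $\Z[1/N]$ and rigidity gives you nothing for the $p$-complete piece, so your fracture argument leaves that piece untouched. Second, the rational piece invokes the statement that universal homeomorphisms induce equivalences on rational motives; that is a theorem for schemes, but $\pi\colon[X/G]\to Y$ is non-representable, and establishing it for $\pi$ would amount to re-deriving the descent argument you are trying to avoid (Theorem \ref{thm-transfer-with-or-without} does not address Nisnevich vs.\ \'etale or universal homeomorphisms). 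Third, the assembly of the $\ell$-adic and rational pieces into a $\Z[1/N]$-statement and the glueing of the slice-theorem reductions are flagged by you as ``the principal technical obstacle'' but are not carried out; as written, the proof is not complete. The slice-theorem reduction itself also buys you nothing, since after the reduction you are still facing the same $H$-quotient descent question. In short: the route is considerably heavier than the problem requires, and the heaviness is precisely what introduces the unresolved gaps.
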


\begin{remark}
    The above functor is almost never essentially surjective. For example, if we consider
    the example $X = S = \Spec k$, where $k$ is a field of characteristic zero, and $G$ acts trivially on $X$.
    Then, by Theorem \ref{thm-sh}, a motivic sheaf on $[\Spec k/G] = BG$ is a motivic sheaf on $\Spec k$ with a group action of $G$. In this case, the essential image of the functor
    \[\pi^*\colon \SH_{\et}(\Spec k, \Z[1/N])\rightarrow \SH_{\et}(BG, \Z[1/N])\]
    associated to the coarse map $\pi: BG\rightarrow \Spec k$, is given by motivic sheaves of $\Z[1/N]$-modules with \textit{trivial} $G$-action.
\end{remark}

\begin{proof}[Proof of Theorem \ref{thm-quot-stack}]
Let $p \colon X \times G^{\bullet} \to [X/G]$ be the nerve of the group action $X \times G \to X$. This is an \'etale hypercover $[X/G]$.
By Corollary \ref{et/coefficients-version} the natural map 
\[
p^* \colon \SH_\et([X/G], \Z[1/N]) \to \lim_{\Delta} \SH_\et(X \times G^\bullet, \Z[1/N])
\]
is an equivalence.
The composition $p^* \circ \pi^*$ is given by
\begin{align*}
    p^* \circ \pi^* \colon \SH_\et(Y, \Z[1/N]) &\to \lim_\Delta \SH_\et(X \times G^\bullet, \Z[1/N])\\
    F &\mapsto [[n] \mapsto q_n^* F],
\end{align*}
where $q_n \colon X\times G^n \to Y$ denotes the composition $q_n \colon X \times G^n \to [X/G] \to Y$.

Let $E,F \in \SH(Y,\Z[1/N])$. We have to show that the natural map
\begin{equation*}
    \Hom_{\SH_\et(Y,\Z[1/N])}(E,F) \to \Hom_{\SH_\et([X/G],\Z[1/N])}(\pi^*E, \pi^*F)
\end{equation*}
is an equivalence. 
We view the mapping spaces as complexes in $D(\Z[1/N])$ via the Dold-Kan correspondence.
Since $\SH_\et(Y,\Z[1/N])$ is generated by compact, strongly dualizable objects, it suffices to treat the case $E = \Unit_Y$. 
We calculate
\begin{align*}
    \Hom_{\SH_\et([X/G],\Z[1/N])}(\Unit_{[X/G]}, \pi^* F) &\simeq
    \Hom_{\SH_\et([X/G],\Z[1/N])}(\Unit_{[X/G]}, p_* p^* \pi^* F)\\ &\simeq^{\ref{lem_comp_functor}} 
    \Hom_{\SH_\et([X/G],\Z[1/N])}(\Unit_{[X/G]}, \lim_{\Delta} p_{n,*} q_n^* F) \\
    &\simeq  \lim_\Delta \Hom_{\SH_\et(X\times G^n ,\Z[1/N])}(\Unit_{X\times G^n}, q_n^* F) \\
    &\simeq \lim_\Delta q_0^*F(X\times G^\bullet).
\end{align*}
As $|G| = N$ is invertible in $\Z[1/N]$, the above limit collapses to $(q_0^*F(X))^G$ (this can be seen, for example, by analysing the associated descent spectral sequence).
But this is precisely equal to $F(Y)= \Hom(\Unit_Y, F)$.
\end{proof}

\begin{cor}
    Let $X/S$ and $G$ be as above. Assume that for every closed point $y \in S$, the residue field $k(y)$ has no real embeddings, then the natural map
    \begin{equation*}
        \pi^* \colon \SH(Y, \Q) \to \SH([X/G],\Q)
    \end{equation*}
    is fully faithful.
\end{cor}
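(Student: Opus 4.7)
The plan is to bootstrap this statement from Theorem \ref{thm-quot-stack} by replacing the \'etale rational motivic category with the Nisnevich one, using that the hypothesis on residue fields forces the comparison functor $\SH(-,\Q)\to \SH_\et(-,\Q)$ to be an equivalence on the schemes/stacks at hand. This is, on schemes, the standard rational splitting result (see e.g.\ the minus-part vanishing of Cisinski--D\'eglise / Bachmann), and it extends to algebraic stacks via the descent established earlier in Theorem \ref{thm-sh} and Corollary \ref{et/coefficients-version}.

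First, I would check that the no-real-embeddings condition propagates from closed points of $S$ to all residue fields of the schemes that will appear. If $k \subset K$ is any field extension and $K$ admits an embedding into $\R$, then its restriction to $k$ gives one too; hence the property ``no real embedding'' is inherited by all subfields. For $X$ (and hence for $Y$ and every term $X\times G^n$ of the \v Cech nerve of $X\to [X/G]$, all quasi-projective over $S$), every residue field is a finitely generated extension of the residue field of its image in $S$, and so reduces to the closed-point case by a standard Jacobson-type argument. Thus every scheme that will enter the descent argument satisfies the hypotheses needed to invoke the $\SH \simeq \SH_\et$ equivalence with $\Q$-coefficients.

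Next, I would promote the equivalence $\SH(X_n,\Q) \xrightarrow{\sim} \SH_\et(X_n,\Q)$ at each simplicial level $X_n = X\times G^n$ to an equivalence at the level of the stack $[X/G]$. By Theorem \ref{thm-sh} and Corollary \ref{et/coefficients-version}, both $\SH(-,\Q)$ and $\SH_\et(-,\Q)$ satisfy descent along the \'etale hypercover $X\times G^\bullet \to [X/G]$, so that taking $\lim_\Delta$ of the pointwise equivalences yields $\SH([X/G],\Q) \simeq \SH_\et([X/G],\Q)$. The same argument applies to $Y$ (which is an algebraic space, so Nisnevich descent along an \'etale cover suffices).

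Finally, I would assemble the commutative square
\begin{equation*}
\begin{tikzcd}
    \SH(Y,\Q) \arrow[r,"\pi^*"] \arrow[d,"\sim" swap] & \SH([X/G],\Q) \arrow[d,"\sim"] \\
    \SH_\et(Y,\Q) \arrow[r,"\pi^*"] & \SH_\et([X/G],\Q)
\end{tikzcd}
\end{equation*}
where the bottom arrow is fully faithful by Theorem \ref{thm-quot-stack} tensored up from $\Z[1/N]$ to $\Q$ (equivalently, one reruns the proof of that theorem with $\Q$-coefficients, using that $N$ is invertible in $\Q$ so that the averaging in the final step still works). Hence the top arrow is fully faithful. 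The main obstacle is the first step: ensuring that the minus part of $\SH(-,\Q)$ really does vanish on every point of $X$, $Y$ and on the simplicial terms $X\times G^n$; once this pointwise input is in place, the descent machinery of Section \ref{section-morel-voevodsky-construction} does the rest essentially formally.
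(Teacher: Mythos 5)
Your proposal follows essentially the same strategy as the paper: reduce the Nisnevich statement to the \'etale statement of Theorem~\ref{thm-quot-stack} by showing that, under the no-real-embeddings hypothesis, the comparison $\SH(-,\Q)\to\SH_\et(-,\Q)$ is an equivalence on the relevant stacks. The paper's proof is extremely terse --- it simply records the chain $\SH_\et(S,\Q)\simeq\DM_\et(S,\Q)\simeq\DM(S,\Q)\simeq\SH(S,\Q)$ over the base $S$ and stops --- so your filling-in of the descent step and the commuting square is a reasonable reconstruction.

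That said, your propagation step is the weak link, and it is worth contrasting with what the paper's terse proof implicitly does. You argue residue-field by residue-field: every residue field of $X$, $Y$, $X\times G^n$ is an extension of a residue field of $S$, and then you invoke a ``Jacobson-type argument'' to descend to closed points of $S$. Two remarks. First, a small slip: you write that ``no real embedding'' is ``inherited by all subfields,'' but the implication runs the other way --- a real embedding of an extension restricts to one of the subfield, so it is the property of \emph{not} having a real embedding that passes from a subfield $k$ to any overfield $K\supset k$; for instance $\Q(i)$ has no real embedding while its subfield $\Q$ does. You use the correct direction in the subsequent sentence, so this is only a wording issue. Second, and more substantively, the Jacobson reduction is genuinely delicate: residue fields of $X$ lying over \emph{non-closed} points of $S$ need not be extensions of residue fields of closed points, and for a general base the closed-point hypothesis does not control the full real spectrum (already over $\Spec\Z$, all closed residue fields are finite yet the generic residue field $\Q$ is formally real). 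The cleaner route, which is what the paper's one-line chain of equivalences is implicitly leaning on, is to observe that the idempotent decomposition $\SH(-,\Q)=\SH(-,\Q)_+\times\SH(-,\Q)_-$ is induced by idempotents in $\pi_0\End(\Unit_S)$ and hence is strictly compatible with pullback along any $f\colon\XC\to S$. Thus once one knows $\SH(S,\Q)_-=0$, it follows for free that $\SH(\XC,\Q)_-=0$ for \emph{every} stack $\XC$ over $S$, with no need to chase residue fields of $X$, $Y$, or the \v Cech nerve at all. This both shortens your second paragraph and sidesteps the fragile Jacobson argument; it concentrates the hypothesis where it must live, namely on $S$ itself. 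The remaining steps --- descent via Theorem~\ref{thm-sh} and Corollary~\ref{et/coefficients-version}, the commuting square, and rerunning Theorem~\ref{thm-quot-stack} with $\Q$-coefficients (valid since $N\in\Q^\times$) --- are exactly right.
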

\begin{proof}
Under these assumptions, we have the following equivalences,
\[
\SH_\et(S, \Q) \simeq \DM_\et(S, \Q) \simeq \DM(S, \Q) \simeq \SH(S, \Q),
\]      
\end{proof}

\begin{theorem}\label{theorem-coarse-space-map-fully faithful}
	Let $\XC$ be a separated Deligne-Mumford stack with coarse space morphism $\pi: \XC\rightarrow Y$. Then the pullback functor
	\[\pi^* \colon \SH_{\et}(Y, \Q) \to \SH_{\et}(\XC,\Q)\]
	is fully faithful
\end{theorem}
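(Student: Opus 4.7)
The plan is to reduce the statement to the quotient stack situation already handled by Theorem \ref{thm-quot-stack} via an \'etale-local argument on the coarse space $Y$. The working coefficients are $\Q$, so every finite group order is automatically invertible, and one never has to track an explicit integer $N$.

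First, I invoke the local structure theorem for separated Deligne-Mumford stacks (Keel-Mori, or \cite[Tag 03BD]{stacks-project}; see also Abramovich-Vistoli): there exists an \'etale cover $U \to Y$ such that the base change $\XC_{U} := \XC \times_Y U$ is a disjoint union of global quotient stacks of the form $[X_i / G_i]$, where each $G_i$ is a finite group acting on an affine scheme $X_i$, and the induced map $\pi_U \colon \XC_U \to U$ identifies $U$ with the coarse space of $\XC_U$ on each component. This uses that the formation of the coarse space commutes with flat base change, so $\pi_U$ is still a coarse space morphism.

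Second, I apply \'etale descent on $Y$ and on $\XC$. Let $U_\bullet \to Y$ be the \v{C}ech nerve of $U \to Y$ and $\XC_{U_\bullet} \to \XC$ its base change, which is the \v{C}ech nerve of the \'etale cover $\XC_U \to \XC$. By Corollary \ref{et/coefficients-version}, we have equivalences
\begin{equation*}
\SH_\et(Y,\Q) \xrightarrow{\sim} \lim_{\Delta} \SH_\et(U_\bullet,\Q), \qquad
\SH_\et(\XC,\Q) \xrightarrow{\sim} \lim_{\Delta} \SH_\et(\XC_{U_\bullet},\Q),
\end{equation*}
and the pullback functor $\pi^*$ is identified under these equivalences with the limit of the pullbacks $\pi_{U_n}^* \colon \SH_\et(U_n,\Q) \to \SH_\et(\XC_{U_n},\Q)$. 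Each $U_n$ is a scheme, each $\XC_{U_n}$ is a finite disjoint union of quotients $[X/G]$ by finite groups of invertible order, and $\pi_{U_n}$ is the coarse space morphism of $\XC_{U_n}$.

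Third, by Theorem \ref{thm-quot-stack}, each $\pi_{U_n}^*$ is fully faithful (the coefficient ring $\Z[1/|G|]$ maps to $\Q$, so fully faithfulness passes from the $\Z[1/|G|]$-linear statement to the $\Q$-linear one). Since a limit of fully faithful functors between presentable $\infty$-categories is fully faithful, the theorem follows.

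The main obstacle is the second step: one must verify that the natural diagram of descent equivalences is compatible with $\pi^*$ and that the coarse space morphism is preserved by \'etale base change. The compatibility is formal from the construction of $\pi_{U_n}$ as a pullback, while the stability of coarse spaces under flat base change is standard (see \cite[Theorem 6.12]{rydh_geo_quot}). A subtler point is ensuring that the local quotient presentations come with $X$ \emph{quasi-projective} over the base, as required to literally invoke Theorem \ref{thm-quot-stack}; this is harmless after further shrinking $U$ to be affine, since $X$ can then be taken affine, hence quasi-projective.
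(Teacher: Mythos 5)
Your proposal is correct and takes essentially the same route as the paper: both reduce to Theorem \ref{thm-quot-stack} by invoking the \'etale-local quotient-stack structure of a separated Deligne--Mumford stack over its coarse space (the paper cites \cite[Lemma 2.2.3]{abramovich_compactification}, you cite Keel--Mori/Stacks Project, which is the same result) and then conclude by \'etale descent. Your write-up is more careful about the technical supporting points — compatibility of $\pi^*$ with the descent equivalences, shrinking $U$ to make $X$ affine so that the quasi-projectivity hypothesis of Theorem \ref{thm-quot-stack} applies, passing from $\Z[1/N]$- to $\Q$-coefficients, and the fact that a $\Delta$-indexed limit of fully faithful functors is fully faithful — but the underlying strategy is identical.
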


\begin{proof}
	By \cite[Lemma 2.2.3]{abramovich_compactification}, \'{e}tale locally on $Y$, we are reduced situation where $\XC=[X/G]$ for a finite \'{e}tale group scheme $G$ over $Y$. As $\SH_{\et}$ satisifies \'{e}tale descent, we are done by Theorem \ref{thm-quot-stack}.
\end{proof}

\begin{remark}
    In the above corollary, if $\XC$ is a quasi-compact separated Deligne-Mumford stack, then there exists an $N\in \Z$ sufficiently large such that the orders of the stabilisers of $\XC$ are bounded by $N$. Then a similar argument shows that, in fact, $\pi^*$ is fully faithful for $\SH_\et(-,\Z[1/N])$.
\end{remark}

\section{Cocomplete coefficient systems on algebraic stacks}\label{section-coefficient-systems}

In this section, we discuss the theory of cocomplete coefficient systems \cite{DG_six_functor} due to Drew-Gallauer in the context of algebraic stacks. This is the $\infty$-categorical reformulation of Ayoub's stable homotopy 2-functors \cite{ayoubthesisI, ayoubthesisII}.

Informally, the main theorem in \cite{DG_six_functor} says that $\SH$ is the universal six functor formalism on the category of schemes which satisfies Nisnevich descent and $\A^1$-invariance, and $\PR^1$-stability.
Any six-functor formalism that is extended to stacks from schemes (for example, by methods of \cite{chowdhury2024}) via Nisnevich descent should satisfy a similar property for representable morphisms for formal reasons. In this section, we extend the theory to include non-representable morphisms.

The following definition is an extension of \cite[Definition 7.5]{DG_six_functor} to algebraic stacks. 

\begin{defn}
    Let $\Stk_S$ be the category of algebraic stacks over $S$. A functor $C\colon  \Stk_S\rightarrow \CAlg(\Cat^{\stb}_\infty)$ taking values in symmetric monoidal stable $\infty$-categories with exact symmetric monoidal functors is called a \textit{coefficient system} if it satisfies the following properties:
    \begin{enumerate}
        \item 
        \begin{itemize}
            \item[(a)] For every $f\colon \XC\rightarrow \YC$ in $\Stk_S$, the pullback functor $f^*\colon C(\YC)\rightarrow C(\XC)$ admits a right adjoint $f_*\colon  C(\XC)\rightarrow C(\YC)$.
            \item[(b)] For every $\XC\in \Stk_S$, the symmetric monoidal structure on $C(\XC)$ is closed.
        \end{itemize}
        \item For every smooth morphism $\XC\rightarrow\YC$ in $\Stk_S$, the functor $f^*\colon C(\YC)\rightarrow C(\XC)$ admits a left adjoint $f_\#$ such that:
        \begin{itemize}
            \item[(a)] For each cartesian square in $\Stk_S$
            \begin{center}
                \begin{tikzcd}
                    \XC'\arrow[d,"g"]\arrow[r,"q"] & \XC\arrow[d,"f"]\\
                    \YC'\arrow[r,"p"]& \YC
                \end{tikzcd}
            \end{center}
                the exchange transformation $g_\#q^*\rightarrow p^*f_\#$ is an equivalence.
            \item[(b)] The exchange transformation $f_\#(-\otimes f^*(-))\rightarrow f_\#(-)\otimes -$ is an equivalence.
        \end{itemize}
        \item \begin{itemize}
            \item[(a)] For every closed immersion $i\colon \mathcal{Z}\rightarrow\XC$ in $\Stk_S$ with complementary open immersion $j\colon \mathcal{U}\rightarrow\XC$, the following square is cartesian in $\Cat_\infty^\stb$
        \begin{center}
            \begin{tikzcd}
                C(\mathcal{Z})\arrow[d]\arrow[r,"i_*"] & C(\XC)\arrow[d,"j^*"]\\
                0\arrow[r] & C(\mathcal{U}).
            \end{tikzcd}
        \end{center}
        \item[(b)] There exists a smooth Nisnevich covering $p_0 \colon X\rightarrow\XC$ for some $X\in \Sm_S$ whose \v{C}ech nerve induces an equivalence
        \[p^*\colon C(\XC)\overset{\sim}{\rightarrow} \lim_\Delta C(X_\bullet).\]
        \end{itemize}
        \item For $\XC\in \Stk_S$, let $\pi_{\A^1}\colon \A^1_\XC \rightarrow \XC$ denote the projection map with the zero section $s\colon X\rightarrow \A^1_\XC$, then
        \begin{itemize}
            \item[(a)] $\pi^*_{\A^1}\colon  C(\XC)\rightarrow C(\A^1_\XC)$ is fully faithful.
            \item[(b)] The composite $\pi_{\A^1,\#}\circ s_* \colon C(\XC)\rightarrow C(\XC)$ is an equivalence.
        \end{itemize}
    \end{enumerate}
\end{defn}

\begin{remark}
    The above definition differs from \cite{DG_six_functor} in that we need to force Nisnevich descent for schemes to stacks by adding the condition $3. (b)$ (see also \cite{chowdhury2024} for a similar statement). The reason for this is that there exist symmetric monoidal functors $C\colon  \Stk_S\rightarrow \CAlg(\Cat_\infty^\stb)$ which satisfy $3. (a)$ but are not Kan extended from schemes.
    For example, the genuine equivariant theory of Hoyois satisfies all the axioms above for representable morphisms. However, it does not satisfy $3. (b)$.
\end{remark}

\begin{remark}
    Another example of a symmetric monoidal functor which does not satisfy Nisnevich descent is given by the functor $\SH_{\mathcal{I}}\colon  \XC\mapsto \SH(\mathcal{I}_\XC)$, where $\clg{I}_\XC$ is the inertia stack of $\XC$. It satisfies $3. (a)$ but not $3.(b)$.
    Moreover, $\SH_{\mathcal{I}}$ satisfies $2.$ only for \textit{stabiliser preserving} smooth morphisms.
    We think of this functor as ``sitting in between" the Kan extended $\SH$ and the genuine equivariant $\SH$. We will explore these ideas in future work.
\end{remark}

\begin{defn}
Let $\Stk_S$ be the category of algebraic stacks over $S$.
A \textit{cocomplete coefficient system }on $\Stk_S$ is a functor $C\colon  \Stk_S\rightarrow \CAlg(\Cat_\infty^{c,\stb})$ taking values in cocomplete symmetric monoidal stable $\infty$-categories with cocontinuous symmetric monoidal functors such that the composition with the forgetful functor $\Cat^{c,\stb}_\infty\rightarrow \Cat_\infty^\stb$ is a coefficient system.
\end{defn}

\begin{defn}
Let $\STCOSY^c_S$ be the category of cocomplete coefficient systems on $\Stk_S$. A morphism $\mathfrak{R}\colon  C\rightarrow D$ between cocomplete coefficient systems is an exact symmetric monoidal natural transformation such that for any morphism $f\colon  \XC\rightarrow \YC \in \Stk_S$, the exchange transformations 
$\theta_f\colon  f_{*}\circ \mathfrak{R}_{\XC} \rightarrow \mathfrak{R}_{\YC}\circ f_{*}$ and $\theta_f^{-1}\colon  \mathfrak{R}_{\YC}\circ f_{\#}\rightarrow f_{\#}\circ \mathfrak{R}_{\XC}$ are invertible.
\end{defn}

\begin{remark}
    As in \cite{DG_six_functor}, we denote by $\COSY^c_S$ the category of cocomplete coefficient systems on $\Sch_S$.
    The inclusion of categories $\Sch_S\hookrightarrow \Stk_S$ induces a functor $i\colon  \STCOSY^c_S\rightarrow \COSY^c_S$.
\end{remark}

\begin{lemma}\label{lemma-extending-elements-cosy-to-stcosy}
    Let $C\in \COSY^c_S$. Then $C$ extends to a cocomplete coefficient system on $\Stk_S$.
\end{lemma}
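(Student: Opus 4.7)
The plan is to define the extension by right Kan extension through smooth Nisnevich covers. For an algebraic stack $\XC \in \Stk_S$, choose by Theorem \ref{theorem_hall} a smooth Nisnevich cover $p \colon X \to \XC$ by a scheme and set
\[
\tilde{C}(\XC) := \lim_{\Delta} C(X_\bullet),
\]
where $X_\bullet$ is the \v{C}ech nerve of $p$ and the limit is taken in $\CAlg(\Prc^L)$. Since the forgetful functors from $\CAlg(\Prc^L)$ to $\Prc^L$ and to $\Cat_\infty^\stb$ preserve limits (as in the proof of Theorem \ref{thm-sh}), $\tilde{C}(\XC)$ inherits the structure of a cocomplete symmetric monoidal stable $\infty$-category with cocontinuous tensor. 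Well-definedness, i.e.\ independence of the chosen cover, follows from a standard refinement argument: given another smooth Nisnevich cover $X' \to \XC$, the bisimplicial scheme $X_\bullet \times_\XC X'_\bullet$ refines both, and Nisnevich descent for $C$ on schemes --- a formal consequence of the localization axiom $(3.\text{a})$ combined with smooth base change $(2.\text{a})$ as in \cite{DG_six_functor} --- produces canonical equivalences between the three limits.

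Next, for a morphism $f \colon \XC \to \YC$ of stacks I choose compatible smooth Nisnevich covers by schemes $X \to \XC$ and $Y \to \YC$ lifting $f$, using Theorem \ref{theorem_hall} to refine $Y \times_\YC \XC$ to a scheme, thereby obtaining a morphism of simplicial schemes $f_\bullet \colon X_\bullet \to Y_\bullet$. The level-wise pullbacks assemble into a symmetric monoidal cocontinuous functor $f^* \colon \tilde{C}(\YC) \to \tilde{C}(\XC)$. Axiom $(1.\text{a})$ is then satisfied because any cocontinuous functor in $\Prc^L$ admits a right adjoint by the adjoint functor theorem, and $(1.\text{b})$ is automatic for presentable symmetric monoidal categories with cocontinuous tensor. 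Axiom $(3.\text{b})$ holds by construction. For the localization axiom $(3.\text{a})$, I pull back the closed--open decomposition $\mathcal{Z} \subset \XC \supset \mathcal{U}$ along the chosen cover (closed and open immersions being representable) and apply the scheme-level localization axiom level-wise, using that cartesian squares in $\Cat_\infty^\stb$ are preserved under limits. Axiom $(4)$ on $\A^1$-invariance reduces level-wise to the scheme case via the base change identity $\A^1_\XC \times_\XC X_n = \A^1_{X_n}$.

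The main obstacle is axiom $(2)$, namely the construction of $f_\#$ for a smooth, possibly non-representable, morphism $f \colon \XC \to \YC$, together with the base change identity $(2.\text{a})$ and the projection formula $(2.\text{b})$. For smooth representable $f$ this is immediate: choose a cover of $\YC$, pull it back to a cover of $\XC$, assemble the $f_{n,\#}$ from the scheme-level coefficient system, and use $(2.\text{a})$ on schemes to verify the required compatibilities under face and degeneracy maps. For the non-representable case I follow the strategy in the proof of Corollary \ref{cor_a_sharp}: pick a smooth Nisnevich cover $q \colon X \to \XC$ by a scheme, so that the composition $fq \colon X \to \YC$ is smooth with scheme source and hence representable. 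Since $\tilde{C}(\XC) \simeq \lim_\Delta C(X_\bullet)$ is generated under colimits by objects of the form $q_{n,\#}(-)$, one defines $f_\#$ on these generators via $(fq)_{n,\#}$ and extends by colimits; the scheme-level versions of $(2.\text{a})$ and $(2.\text{b})$ force this to be a well-defined left adjoint to $f^*$ satisfying the desired exchange properties. Verifying independence of the chosen cover, functoriality in $f$, and compatibility of $f_\#$ with arbitrary (not merely smooth) base change requires a careful bisimplicial diagram chase, and this is the step I expect to be the most delicate.
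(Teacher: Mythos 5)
Your overall strategy — right Kan extension of $C$ along $\Sch_S \hookrightarrow \Stk_S$ via \v{C}ech nerves of smooth Nisnevich covers, followed by a level-wise verification of the axioms — is the same strategy the paper uses. The difference is how the coherence of the construction is handled, and this is where your proposal has a genuine gap.

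You construct $\tilde{C}$ on objects as a limit, then construct $f^*$ for individual morphisms by choosing compatible covers, then construct $f_\#$ on generators and extend by colimits. None of this, by itself, produces a \emph{functor} $\tilde{C} \colon \Stk_S \to \CAlg(\Cat_\infty^{c,\stb})$ equipped with coherently adjoint $f_\#$ satisfying the exchange conditions: for that you need to specify all the higher coherence data (compatibility of the chosen covers under composition, coherence of the Beck--Chevalley transformations across the bisimplicial diagrams, etc.), and this data does not assemble ``for free'' from level-wise constructions. You flag this yourself at the end (``requires a careful bisimplicial diagram chase \dots this is the step I expect to be the most delicate''), but that chase \emph{is} the theorem: without it the proof is incomplete, and it is not a routine verification. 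This is precisely the coherence problem that the abstract extension machinery is designed to solve. The paper's proof sidesteps the issue by repackaging $C$ as a functor $C^*_\# \colon \Corr(\Sch_S)_{\mathrm{sm,all}} \to \CAlg(\Cat_\infty)$ on a category of correspondences (a six-functor formalism in the sense of \cite[Appendix A.5]{mann2022sixfun}), and then invoking \cite[Theorems A.16 and A.19]{chowdhury2024} to extend this functor first to smooth representable correspondences of stacks and then to all smooth correspondences. Those theorems are exactly what supplies, in one package, the coherent construction of $f^*$ and $f_\#$ for non-representable smooth $f$ together with the base change and projection formula axioms $(2.\text{a})$, $(2.\text{b})$. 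After that, only axioms $(3)$ and $(4)$ remain, and those are handled by the same Nisnevich-descent argument you outline. In short: your objectwise definitions and your reduction of $(3)$--$(4)$ to schemes match the paper, but the heart of the proof — promoting the level-wise $(f^*, f_\#)$ data to a coherent $\infty$-functor on $\Stk_S$ — is left as a gap in your write-up, whereas the paper closes it by appealing to the Chowdhury--D'Angelo extension theorems.

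A smaller point: you take the limit in $\CAlg(\Prc^L)$, but the target of a cocomplete coefficient system is $\CAlg(\Cat_\infty^{c,\stb})$; one should either work in that category (where limits of diagrams along cocontinuous functors are computed in $\Cat_\infty$ after passing to right adjoints) or justify the restriction to presentable categories. This is easy to fix but worth stating precisely.
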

\begin{proof}
    We use the formalism described in \cite[Appendix A.2]{chowdhury2024} to prove this.
    Let $\Corr (\Sch_S)_{\mathrm{sm,all}}$ be the category of correspondences whose objects are $S$-schemes, and morphisms are given by spans $X\overset{f}{\leftarrow} Z\overset{g}{\rightarrow} Y$ such that $f,g$ are morphisms of $S$-schemes and $g$ is smooth.

    If $C$ is cocomplete coefficient system on $\Sch_S$, then we have an induced a functor
    \[C^*_\# \colon \Corr (\Sch_S)_{\mathrm{sm,all}}\rightarrow \CAlg(\Cat_\infty)\] that sends $X$ to $C(X)$, and a span $X\overset{f}{\leftarrow} Z\overset{g}{\rightarrow} Y$ to $g_\# f^*$. This describes a six functor formalism for the pair $(f^*, f_\#)$ in the sense of \cite[Appendix A.5]{mann2022sixfun}

    Let $\mathrm{smrep}$ and $\mathrm{stsm}$ denote the classes of smooth representable morphisms, respectively, of all smooth morphisms on $\Stk_S$. Then, by \cite[Theorem A.16]{chowdhury2024}, $C$ extends to a functor \[C'^*_\#\colon  \Corr (\Stk_S)_{\mathrm{smrep,all}}\rightarrow \CAlg (\Cat_\infty^\stb).\] Furthermore, by \textit{loc. cit.} Theorem A.19, $C'^*_\#$ extends to a functor \[\overline{C}^*_\#\colon  \Corr (\Stk_S)_{\mathrm{stsm,all}}\rightarrow \CAlg (\Cat_\infty^\stb).\]

    We have an inclusion $\Stk_S^\op\hookrightarrow \Corr (\Stk_S)_{\mathrm{stsm,all}}$ that sends any morphism $f\colon  \XC\rightarrow \YC$ to the correspondence $\YC \overset{f}{\leftarrow} \XC \overset{id}{\rightarrow} \XC$. The composition gives us a functor $\overline{C}\colon  \Stk_S^\op\rightarrow \CAlg (\Cat_\infty^\stb)$ such that $\XC\mapsto \overline{C}^*_\#(\XC)$ and a morphism $f\mapsto f^*$. We claim that $\overline{C}$ is a cocomplete coefficient system on $\Stk_S$.

    Properties (1) and (2) are satisfied due to Theorems A.16, A.19 of \textit{loc. cit.}, and the discussion in Appendix A.2 regarding smooth base change and the projection formula.
    
    Properties (3) and (4) follow from Nisnevich descent. 
    For this, choose a simplicial resolution $p\colon  X_{\bullet}\rightarrow \XC$ (for example, the nerve of a smooth Nisnevich covering $X_0\rightarrow\XC$).
    
    To prove property (3), let $i\colon \mathcal{Z}\rightarrow \XC$ be a closed substack with complementary open $j\colon \mathcal{U}\rightarrow \XC$. Restricting $X_{\bullet}$ to these substacks gives us simplicial resolutions $p_{\mathcal{Z}}\colon  Z_{\bullet}\rightarrow \mathcal{Z}$  and $p_{\mathcal{U}}\colon  U_{\bullet}\rightarrow\mathcal{U}$ such that the square
    \begin{center}
            \begin{tikzcd}
                C(Z_n)\arrow[d]\arrow[r,"i_{n*}"] & C(X_n)\arrow[d,"j_n^*"]\\
                0\arrow[r] & C(U_n)
            \end{tikzcd}
        \end{center}
    is Cartesian in each simplicial degree $n$. As $i$ is a closed immersion, $i_{n*}$ commutes with pullback. Then, taking limits gives us the required Cartesian square.

    Similarly for property (4), the map $\pi^*_{\A^1}\colon  C(X_n)\rightarrow C(\A^1_{X_n})$ is fully faithful in each degree $n$. Thus, by Nisnevich descent, so is the map $\pi^*_{\A^1}\colon  C(\XC)\rightarrow C(\A^1_\XC)$. Finally, the map $\pi_{\A^1 \#}\circ s_* \colon C(X_n)\rightarrow C(X_n)$ is an equivalence for each $n$, and $\pi_{\A^1 \#}$ and $s_*$ behave well with respect to base change (by smooth base change formula and the fact that $s$ is a closed immersion). Thus, $\pi_{\A^1,\#}\circ s_* \colon C(\XC)\rightarrow C(\XC)$ by Nisnevich descent.
\end{proof}

\begin{lemma}\label{lemma-extending-realisation-to-stacks}
    Let $\mathfrak{R}\colon  C\rightarrow D \in \COSY^c_S$ be a morphism of cocomplete coefficient systems. Then it extends to a morphism in $\STCOSY^c_S$.
\end{lemma}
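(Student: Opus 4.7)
The plan is to carry out the extension of Lemma \ref{lemma-extending-elements-cosy-to-stcosy} in families, i.e., for the natural transformation $\mathfrak{R}$ rather than for a single coefficient system. The fact that $\mathfrak{R}$ commutes with $f^*$ on the nose and, for smooth $f$ between schemes, with $f_\#$ via an equivalence is precisely what is needed for the components $(\mathfrak{R}_X)_{X\in\Sch_S}$ to assemble into a natural transformation
\[
\mathfrak{R}^*_\# \colon C^*_\# \Rightarrow D^*_\# \colon \Corr(\Sch_S)_{\mathrm{sm,all}} \to \CAlg(\Cat^{c,\stb}_\infty)
\]
between the functors built in the proof of Lemma \ref{lemma-extending-elements-cosy-to-stcosy}.

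Applying the extension machinery of \cite[Theorems A.16 and A.19]{chowdhury2024} functorially in the input, one propagates $\mathfrak{R}^*_\#$ first to a natural transformation on $\Corr(\Stk_S)_{\mathrm{smrep,all}}$ and then to $\Corr(\Stk_S)_{\mathrm{stsm,all}}$. Restricting along the inclusion $\Stk_S^\op\hookrightarrow \Corr(\Stk_S)_{\mathrm{stsm,all}}$ yields a cocontinuous, symmetric monoidal, exact natural transformation $\bar{\mathfrak{R}} \colon \bar C \to \bar D$ that restricts to $\mathfrak{R}$ on $\Sch_S$, commutes with $f^*$ strictly for arbitrary $f$, and has invertible exchange against $f_\#$ for smooth $f$, the latter because $f_\#$ is built into the morphisms of the correspondence category.

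For the remaining Beck-Chevalley condition, that the exchange $\theta_f \colon f_*\bar{\mathfrak{R}}_\XC \to \bar{\mathfrak{R}}_\YC f_*$ is an equivalence for every $f\colon \XC \to \YC$ in $\Stk_S$, I would argue by iterated Nisnevich descent. Choosing a smooth Nisnevich cover $Y_0\to\YC$ by a scheme with \v{C}ech nerve $Y_\bullet$ and the base-change diagram $X_\bullet := \XC\times_\YC Y_\bullet$, axiom 3(b) together with smooth base change for $f_*$ along the smooth morphisms $q_n\colon Y_n\to\YC$ reduces $\theta_f$ to the exchange for each $f_n\colon X_n\to Y_n$; a further descent step covering each $X_n$ by a scheme reduces to a morphism of schemes (or algebraic spaces), where the exchange is invertible by assumption on $\mathfrak{R}$.

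The main obstacle is confirming that the two extension steps of \cite[Appendix A.2]{chowdhury2024} upgrade to functorial constructions in the input coefficient system, so that a natural transformation of inputs produces a natural transformation of outputs. Since both steps are obtained by universal constructions (right Kan extension from $\Sch_S$ to $\Stk_S$, then Nisnevich descent from smooth representable to all smooth morphisms), this should follow from inspection of the proofs in \emph{loc. cit.}, but the details need to be tracked carefully through the correspondence category formalism. A secondary concern is the appeal to smooth base change for $f_*$ in a general cocomplete coefficient system; this is standard in the motivic examples and can be derived from axiom 2(a) together with the localization property 3(a), but deserves an explicit argument.
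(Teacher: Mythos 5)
Your overall strategy matches the paper's: extend $\mathfrak{R}$ by invoking the functoriality of the Chowdhury--D'Angelo extension machinery of \cite[Appendix A]{chowdhury2024}, then use descent and codescent to propagate the invertibility of the exchange transformations $\theta_f$ and $\theta_f^{-1}$ from schemes to stacks. The paper's own proof is essentially the two-sentence assertion that \cite[Propositions A.19 and A.23]{chowdhury2024} are functorial in $\mathfrak{R}$, from which the claim follows by descent for $f^*$ and codescent for $f_\#$; you are spelling out what that assertion should mean.

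However, your explicit ``iterated Nisnevich descent'' argument for the invertibility of $\theta_f$ has a gap that you do not flag. The first step (covering the target $\YC$ by schemes $Y_\bullet$, using conservativity of $q_0^*$ and smooth base change for $f_*$) is sound. But in the second step, where you cover the source $X_n$ by a scheme $Z_\bullet$, the pushforward $f_{n*}\colon C(X_n)\to C(Y_n)$ is computed (as in Lemma~\ref{lem_comp_functor}) via the totalization $f_{n*}(F_\bullet)\simeq \lim_\Delta (f_n r_m)_* F_m$, and identifying $\theta_{f_n}$ with the exchange maps $\theta_{f_n r_m}$ for the morphisms of schemes $f_n r_m \colon Z_m \to Y_n$ requires $\mathfrak{R}_{Y_n}$ to commute with this $\lim_\Delta$. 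Since $\mathfrak{R}$ is only assumed exact and cocontinuous, it does not automatically commute with infinite cosimplicial limits, so the reduction does not close as stated. The paper avoids confronting this directly by delegating it to the cited functoriality of the Chowdhury--D'Angelo constructions; to make your version precise you would need either to justify why $\mathfrak{R}$ preserves the relevant totalizations (e.g.\ via a convergence/boundedness argument), or to argue instead, as the paper does, at the level of the correspondence-category extensions rather than object by object. Your two stated concerns (functoriality of the extension steps; smooth base change for $f_*$) are genuine but are the easier parts: the former is exactly what the paper asserts, and the latter follows from axiom 2(a) by passing to right adjoints. The totalization issue is the point that needs more care.
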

\begin{proof}
    Note that $C$ and $D$ satisfy decent along $f^*$ and codescent along $f_\#$.
    We note that \cite[Proposition A.19 and A.23]{chowdhury2024}, are functorial in $\mathfrak{R}$.
    Thus, by descent and codescent, $\mathfrak{R}$ extends to a morphism in $\STCOSY_S^c$, and that $\theta_f$ and $\theta_f^{-1}$ are invertible.
\end{proof}

\begin{prop}\label{prop-equivalence-cosy-stcosy}
    The functor $i\colon \STCOSY^c_S\rightarrow \COSY^c_S$ is an equivalence of $\infty$-categories.
\end{prop}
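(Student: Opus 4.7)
The plan is to explicitly construct an inverse functor to $i$, leveraging the two preceding lemmas. Define $E \colon \COSY^c_S \to \STCOSY^c_S$ by sending an object $C \in \COSY^c_S$ to the extension $\overline{C}$ produced in Lemma \ref{lemma-extending-elements-cosy-to-stcosy}, and sending a morphism $\mathfrak{R} \colon C \to D$ to the extension $\overline{\mathfrak{R}}$ produced in Lemma \ref{lemma-extending-realisation-to-stacks}. Functoriality of $E$, and the higher coherence data needed to promote it to a functor of $\infty$-categories, follow from the naturality of the Nisnevich codescent extension procedure of \cite{chowdhury2024} that underlies both lemmas.

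I would then verify that $i \circ E \simeq \id_{\COSY^c_S}$, which is essentially immediate from the construction: a scheme is its own trivial smooth Nisnevich cover, so the extension $\overline{C}$ agrees with $C$ on the subcategory $\Sch_S \subset \Stk_S$, and similarly for morphisms. The substantive step is to exhibit a natural equivalence $E \circ i \simeq \id_{\STCOSY^c_S}$. Given $C \in \STCOSY^c_S$ and an algebraic stack $\XC$ with smooth Nisnevich cover $p_0 \colon X \to \XC$ and \v{C}ech nerve $p \colon X_\bullet \to \XC$, axiom $3.(b)$ of being a coefficient system supplies an equivalence
\[
p^* \colon C(\XC) \xrightarrow{\sim} \lim_\Delta C(X_\bullet),
\]
while by construction $\overline{i(C)}(\XC)$ is defined (via \cite[Theorem A.19]{chowdhury2024}) to be precisely $\lim_\Delta i(C)(X_\bullet) = \lim_\Delta C(X_\bullet)$. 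This yields a pointwise equivalence $C(\XC) \simeq \overline{i(C)}(\XC)$.

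The main difficulty will be to upgrade these pointwise equivalences to a genuine symmetric monoidal natural equivalence in $\STCOSY^c_S$, that is, to verify compatibility with all pullbacks $f^*$, pushforwards $f_*$, left adjoints $f_\#$ for smooth (possibly non-representable) $f$, and the closed symmetric monoidal structures. The key observation is that both $C$ and $\overline{i(C)}$ are determined up to contractible choice by their restriction to $\Sch_S$ together with the Nisnevich codescent data: the correspondence extension of \cite[Theorems A.16, A.19]{chowdhury2024} is a universal construction, so any two descent-compatible extensions of a fixed coefficient system on $\Sch_S$ are canonically equivalent. Applying this uniqueness principle, together with the parallel statement for morphisms extracted from Lemma \ref{lemma-extending-realisation-to-stacks}, yields both the natural equivalence $C \simeq \overline{i(C)}$ and the corresponding equivalence on morphism spaces, proving that $i$ is fully faithful. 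Combined with essential surjectivity (which is Lemma \ref{lemma-extending-elements-cosy-to-stcosy}), this establishes the equivalence of $\infty$-categories.
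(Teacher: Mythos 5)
Your proposal is correct and follows essentially the same route as the paper: both construct the inverse functor from Lemmas \ref{lemma-extending-elements-cosy-to-stcosy} and \ref{lemma-extending-realisation-to-stacks} and then verify that both composites are the identity, with the nontrivial direction (that the extension of $C|_{\Sch_S}$ recovers $C$) following from the descent axiom $3.(b)$ and a uniqueness argument for the ambient structure ($f_\#$, $f_*$, monoidal data). The only stylistic difference is that the paper packages the inverse as a right adjoint $j$ and checks that the unit and counit are equivalences, whereas you build the inverse directly and are somewhat more explicit about the coherence issues the paper handles by invoking uniqueness of adjoints.
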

\begin{proof}
    By the previous two lemmas, we have a functor $j\colon  \COSY^c_S\rightarrow \STCOSY^c_S$, which is a right adjoint of $i$. 
    Thus, the unit and counit of the adjunction give natural transformations $\id \to j \circ i$ and $i\circ j \to \id$.
    We claim that these are equivalences.
    For any $C\in \STCOSY^c_S$ we have that $j\circ i (C(\XC))\simeq \lim_{\Delta} C(X_{\bullet})$, for some Nisnevich local equivalence $X_{\bullet}\rightarrow \XC$.
    As any $C\in \STCOSY^c_S$ satisfies Nisnevich descent along $f^*$, we have an isomorphism $j\circ i (C(\XC))\to C(\XC)$. Further, by uniqueness of adjoints, the adjoint pairs $f_\# \dashv f^*$ (for $f$ smooth) and $f^*\dashv f_*$ are uniquely determined up to contractible choices. Thus, $j\circ i$ is an equivalence.

    Similarly, if $D\in \COSY^c_S$, then the extension $j(D)$ agrees with $D$ on $\Sch_S$, hence, the adjunctions $f_\# \dashv f^*$ (for $f$ smooth) and $f^*\dashv f_*$ are also preserved. Consequently, $i \circ j(D) \to D$ is an equivalence.
\end{proof}

\begin{cor}\label{corollary-SH-initial}
   The object $\mathbf{SH}$ is initial in $\STCOSY^c_S$.
\end{cor}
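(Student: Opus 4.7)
The plan is to leverage Proposition \ref{prop-equivalence-cosy-stcosy} together with the main theorem of Drew-Gallauer \cite{DG_six_functor}, which asserts that $\mathbf{SH}$ is initial in $\COSY^c_S$. Since an equivalence of $\infty$-categories preserves initial objects, the image of $\mathbf{SH}|_{\Sch_S}$ under the quasi-inverse $j\colon \COSY^c_S \to \STCOSY^c_S$ must be initial in $\STCOSY^c_S$. Thus the statement reduces to the identification $j(\mathbf{SH}|_{\Sch_S}) \simeq \mathbf{SH}$ as cocomplete coefficient systems on $\Stk_S$.

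To verify this identification, I would argue as follows. By construction in the proof of Lemma \ref{lemma-extending-elements-cosy-to-stcosy}, $j(\mathbf{SH}|_{\Sch_S})(\XC) \simeq \lim_\Delta \mathbf{SH}(X_\bullet)$ for any smooth Nisnevich cover $X_\bullet \to \XC$. But by Theorem \ref{thm-sh}, the Morel-Voevodsky construction $\mathbf{SH}(\XC)$ satisfies exactly this descent property, i.e.\ $\mathbf{SH}(\XC) \simeq \lim_\Delta \mathbf{SH}(X_\bullet) \simeq j_*\mathbf{SH}(\XC)$, and moreover this equivalence is compatible with the pullback functors $f^*$. This gives the identification at the level of the underlying functors $\Stk_S^{\op} \to \CAlg(\Cat_\infty^{c,\stb})$.

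It remains to check that this identification is compatible with the full structure of a cocomplete coefficient system, namely the adjunctions $f_\# \dashv f^*$ for smooth $f$ and $f^* \dashv f_*$, closed base change, the localization property, and $\A^1$-homotopy invariance. For smooth representable morphisms these come from the scheme-theoretic structure by Nisnevich descent. For a non-representable smooth morphism $f\colon \XC \to \YC$, the left adjoint $f_\#$ on $\mathbf{SH}(\XC)$ constructed via Corollary \ref{cor_a_sharp} necessarily agrees with the left adjoint produced by the extension procedure of \cite[Appendix A.2]{chowdhury2024} by uniqueness of adjoints; the remaining properties hold degreewise on any Nisnevich resolution and descend.

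The main conceptual point, and the one that does the real work, is Theorem \ref{thm-sh}: without it, there would be no a priori reason for the Morel-Voevodsky $\mathbf{SH}(\XC)$ to coincide with the Kan-extension-with-descent model $j(\mathbf{SH}|_{\Sch_S})(\XC)$, and thus no way to import the Drew-Gallauer universal property. Once this identification is in hand, the corollary is a formal consequence of Proposition \ref{prop-equivalence-cosy-stcosy} and \cite[Theorem 7.14]{DG_six_functor}.
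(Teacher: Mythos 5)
Your proof is correct and follows essentially the same route as the paper's: combine the Drew--Gallauer initiality of $\mathbf{SH}|_{\Sch_S}$ in $\COSY^c_S$ with the equivalence $i\colon\STCOSY^c_S\to\COSY^c_S$ from Proposition \ref{prop-equivalence-cosy-stcosy}. The paper's proof is considerably terser (two sentences) and leaves the identification $j(\mathbf{SH}|_{\Sch_S})\simeq\mathbf{SH}$ entirely implicit; you make explicit that this identification rests on Theorem \ref{thm-sh}, which is indeed the ingredient (via axiom 3.(b)) that lets the Morel--Voevodsky $\mathbf{SH}$ be recognised as an object of $\STCOSY^c_S$ in the first place. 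This is a reasonable clarification rather than a different argument.
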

\begin{proof}
    The functor $\mathbf{SH}$ on $\Sch_S$ is initial in $\COSY^c_S$ by \cite[Theorem 7.14]{DG_six_functor}. Thus, by the previous proposition, $\mathbf{SH}$ is initial in $\STCOSY^c_S$. 
\end{proof}

\begin{remark}
    A statement similar to the above corollary is also proved in \cite{chowdhury2024}.
\end{remark}

By \cite{ayoubthesisI, ayoubthesisII}, any cocomplete coefficient system on $\Sch_S$ gives rise to a six-functor formalism. Together with the results in \cite{chowdhury2024}, we have a similar statement for $\Stk_S$.

\begin{prop}
    Let $C\in\STCOSY_S^c$ be a cocomplete coefficient system. Then for a locally finite type map $f\colon  \XC\rightarrow\YC$ of algebraic stacks, $C$ admits adjoints $f_!\dashv f^!$, satisfying the conclusion of \cite[Theorem 4.26]{chowdhury2024}.
\end{prop}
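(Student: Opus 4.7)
The plan is to reduce to the scheme case via Nisnevich descent, essentially upgrading the Chowdhury--D'Angelo extension machinery from $\SH$ to an arbitrary cocomplete coefficient system $C$. First, by Proposition \ref{prop-equivalence-cosy-stcosy}, the restriction $i(C)$ lies in $\COSY_S^c$. By the foundational work of Ayoub \cite{ayoubthesisI,ayoubthesisII}, reformulated in \cite{DG_six_functor}, any element of $\COSY_S^c$ admits a full six-functor formalism on $\Sch_S$, so for every locally finite type morphism of schemes the adjoint pair $f_!\dashv f^!$ exists and satisfies smooth base change, proper base change, and the projection formula.

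Next, I would treat the representable case: suppose $f\colon \XC \to \YC$ is representable and locally of finite type. Choose a smooth Nisnevich cover $Y \to \YC$ by a scheme and form its \v{C}ech nerve $q_\bullet \colon Y_\bullet \to \YC$ (Theorem \ref{theorem_hall}). The base changes $f_n \colon \XC\times_\YC Y_n \to Y_n$ are morphisms of schemes, and by smooth base change for the six-functor formalism on schemes, the $f_{n,!}$ and $f_n^!$ assemble into compatible simplicial diagrams. Using axiom (3)(b) of a cocomplete coefficient system, namely the equivalences $C(\XC)\simeq \lim_{\Delta} C(\XC\times_\YC Y_\bullet)$ and $C(\YC)\simeq \lim_{\Delta} C(Y_\bullet)$, we obtain the desired adjoint pair on $C$, together with the expected base-change isomorphisms.

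For the general, possibly non-representable, case, I would invoke the gluing machinery of \cite[Appendix A]{chowdhury2024}, which is a Liu--Zheng/Mann-style $\infty$-categorical assembly of six-functor formalisms from representable data via smooth descent. Concretely, smooth Nisnevich atlases for the source and the target together with Nisnevich descent of $C$ allow one to package $f_!$ and $f^!$ as limits over atlases of the representable exceptional functors already constructed in the previous step. Once assembled, the compatibilities listed in \cite[Theorem 4.26]{chowdhury2024} (base change, projection formula, purity for smooth morphisms) descend termwise from the scheme level.

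The main obstacle is verifying that every step of the argument in \cite{chowdhury2024} is genuinely formal in the input coefficient system, that is, uses only the axioms of a cocomplete coefficient system together with Nisnevich descent, and not any specific feature of $\SH$. This amounts to inspecting the Liu--Zheng/Mann-style gluing argument to confirm that all the necessary inputs (smooth base change, proper base change, projection formula on schemes) are available for $C$ by the scheme case above, and that the descent encoded in axiom (3)(b) of a cocomplete coefficient system is sufficient to control the extension procedure on non-representable morphisms. With this verification in hand, the proof of \cite[Theorem 4.26]{chowdhury2024} transcribes with $C$ in place of $\SH$, giving the desired result.
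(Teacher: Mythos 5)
Your proposal is correct and takes essentially the same approach as the paper: reduce to the scheme case, where $i(C)\in\COSY^c_S$ gives the full six-functor formalism, and then invoke the gluing machinery of \cite[Appendix A]{chowdhury2024} to extend $f_!\dashv f^!$ to (possibly non-representable) locally finite type morphisms of stacks. The paper compresses your explicit representable/non-representable breakdown into a single observation — that $C^*_{|\Sch_S}$ is a Nisnevich sheaf and $C_{|\Sch_S !}$ is a Nisnevich cosheaf, which are precisely the descent and codescent inputs you identify via axiom (3)(b) and smooth base change — and then notes that the argument of \cite{chowdhury2024} for $\SH$ applies verbatim.
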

\begin{proof}
    Note that the statement is true for the restriction $C_{|\Sch_S}$.
    Moreover, $C_{|\Sch_S}^*$ is a Nisnevich sheaf, while $C_{|\Sch_S !}$ is a Nisnevich cosheaf.
    Thus, the argument for $\SH$ presented in \cite{chowdhury2024} works in this case and \cite[Theorem 4.26]{chowdhury2024} holds for $C$.
\end{proof}

\begin{prop}\label{prop-extending-realisation-commutes-with-shriek-to-stacks}
    Let $\mathfrak{R}\colon  C\rightarrow D$ be a morphism in $\STCOSY^c_S$. Then $\mathfrak{R}$ commutes with $f_!$, i.e, the natural transformation $\rho_f\colon  f_!\circ \mathfrak{R}\rightarrow \mathfrak{R}\circ f_!$ is invertible.
\end{prop}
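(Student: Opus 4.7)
The plan is to deduce the claim from its scheme-level analogue, using the Nisnevich descent construction of $f_!$ established in the preceding proposition. For morphisms of schemes, the commutativity $\rho_f\colon f_!\circ \mathfrak{R}\xrightarrow{\sim}\mathfrak{R}\circ f_!$ is a classical consequence of $\mathfrak{R}|_{\Sch_S}$ being a morphism in $\COSY^c_S$, as in \cite{DG_six_functor}; see also the treatment in \cite{ayoubthesisI} under the name ``morphism of stable homotopy $2$-functors''.

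The first step is to reduce to the case where the target is a scheme. I would choose a smooth Nisnevich cover $p_0\colon Y_0\to\YC$ by a scheme (Theorem \ref{theorem_hall}) with \v{C}ech nerve $p\colon Y_\bullet\to\YC$, and form the base changes $f_n\colon X_n\to Y_n$ with projections $q_n\colon X_n\to\XC$ and $p_n\colon Y_n\to\YC$. Since both $C(\YC)$ and $D(\YC)$ satisfy Nisnevich descent along $p^*$ (axiom $3.(b)$), it suffices to show that $p_n^*(\rho_f)$ is an equivalence for each $n$. The smooth base change isomorphism $p_n^* f_!\simeq f_{n,!}\,q_n^*$ from the preceding proposition, together with the fact that $\mathfrak{R}$ commutes with $p_n^*$ and $q_n^*$ (being a morphism in $\STCOSY^c_S$), identifies $p_n^*\rho_f$ with $\rho_{f_n}$ up to invertible exchange transformations.

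The second step is to reduce the source to a scheme. Take a smooth Nisnevich cover $X_0'\to X_n$ by a scheme (Theorem \ref{theorem_hall}) with \v{C}ech nerve $r_\bullet\colon X_\bullet'\to X_n$, and apply Nisnevich codescent for $f_{n,!}$ in the source variable. Concretely, $f_{n,!}$ is computed as a colimit of the functors $(f_n\circ r_m)_!$ via the natural Beck--Chevalley maps, and $\mathfrak{R}$ commutes with the pullbacks $r_m^*$ and with the governing colimits. Hence $\rho_{f_n}$ is determined by the collection of $\rho_{f_n\circ r_m}$, each of which concerns a morphism of schemes and is thus an equivalence by the classical case.

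The main obstacle is bookkeeping: one must verify that the descent construction of $f_!$ from \cite{chowdhury2024} is sufficiently functorial in the coefficient system to produce a well-defined natural transformation $\rho_f$ to begin with, and that each exchange map occurring in the reduction above is genuinely invertible. Both points reduce to the observation that the constructions of \textit{loc. cit.} are obtained from the representable scheme case via exact (co)limits of adjoint pairs of cocontinuous functors, and any morphism in $\STCOSY^c_S$---by its very definition, together with Proposition \ref{prop-equivalence-cosy-stcosy} and Lemma \ref{lemma-extending-realisation-to-stacks}---commutes with all such operations.
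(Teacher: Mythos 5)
Your proposal takes essentially the same approach as the paper: reduce to the scheme case (where the invertibility of $\rho_f$ is known by Ayoub's work on morphisms of stable homotopy $2$-functors / cocomplete coefficient systems) via Nisnevich descent and codescent along smooth Nisnevich covers, with the bookkeeping about functoriality of the extension of $f_!$ handled by the descent machinery of \cite[Proposition A.23]{chowdhury2024}. The paper's proof is more compressed, citing that construction directly and observing that $\rho_f$ is exhibited as a colimit of the invertible scheme-level transformations $\rho_{f_i}$; your two-step reduction (target first, then source) unwinds precisely that argument.
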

\begin{proof}
    By Theorem 3.4 of \cite{ayoub_betti}, $\rho_f$ is invertible for any morphism $f\colon  X\rightarrow Y$ in $\Sch_S$. 
    
    By the argument in \cite[Proposition A.23]{chowdhury2024} applied to $\mathfrak{R}$, we see that for any morphism $f\colon  \XC\rightarrow\YC$, the natural transformation $\rho_f$ is constructed as a colimit over $\rho_{f_i}$ which are invertible in each simplicial degree. Thus, by the universal property of colimits, $\rho_f$ is also invertible.
\end{proof}

\subsection{\'Etale motives and realisation}

In this section, we extend Ayoub's \'{e}tale realisation functor \cite{ayoub_etreal} to algebraic stacks. In what follows, we freely use the notation and hypothesis in \textit{loc. cit.}

Let $\Lambda$ be a ring and $I\subset \Lambda$ be an ideal. 
Let $D_\et(\XC,\Lambda/I)$ and $D_\et(\XC,\Lambda_I)$ denote the category of \'{e}tale and $I$-adic sheaves on $\Sm_\XC$. As both $D_{\et}(-,\Lambda/I)$ and $D_{\et}(-,\Lambda_I)$ satisfy \'{e}tale hyperdescent, we have equivalences of $\infty$-categories,
\begin{align*}
    D_\et(\XC,\Lambda/I)&\rightarrow  \underset{\Delta}{\lim}\, D_\et(X_{\bullet},\Lambda/I),\\
    D_\et(\XC,\Lambda_I)&\rightarrow \underset{\Delta}{\lim}\, D_\et(X_{\bullet},\Lambda_I),
\end{align*}
where $X_\bullet$ is the \v{C}ech nerve of a smooth cover $p: X\rightarrow\XC$. 

Let $\mathfrak{R}^\et\colon  \DA_\et(\XC, \Lambda)\rightarrow D_\et(\XC,\Lambda/I)$ and $\mathfrak{R}^\et_I\colon  \DA_\et(\XC, \Lambda)\rightarrow D_\et(\XC,\Lambda_I)$ be Ayoub's \'{e}tale realisation and $I$-adic \'{e}tale realisation functors right Kan-extended to stacks.

\begin{theorem}
    The functors $\mathfrak{R}^{\et}\colon  \DA_{\et}(\XC, \Lambda)\rightarrow D_{\et}(\XC,\Lambda/I)$ and $\mathfrak{R}^\et_I\colon  \DA_\et(\XC, \Lambda)\rightarrow D_\et(\XC,\Lambda_I)$ commute with Grothendieck's six operations.
\end{theorem}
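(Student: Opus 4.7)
The plan is to reduce the statement to the case of schemes, where the result for $\mathfrak{R}^\et$ is \cite[Theorem 3.4]{ayoub_etreal} and the statement for $\mathfrak{R}^\et_I$ follows by the analogous argument, via the formalism of Section \ref{section-coefficient-systems}. The categories $\DA_\et(-,\Lambda)$, $D_\et(-,\Lambda/I)$, and $D_\et(-,\Lambda_I)$ all restrict to cocomplete coefficient systems on $\Sch_S$, and Ayoub's work identifies $\mathfrak{R}^\et$ and $\mathfrak{R}^\et_I$ as morphisms in $\COSY^c_S$. By Proposition \ref{prop-equivalence-cosy-stcosy}, they extend uniquely to morphisms in $\STCOSY^c_S$.

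From the defining properties of morphisms in $\STCOSY^c_S$, the extended realizations automatically commute with $f^*$, $f_*$, and with $f_\#$ for smooth $f$; being exact symmetric monoidal, they also commute with $\otimes$. Compatibility with $f_!$ for locally of finite type $f$ is precisely Proposition \ref{prop-extending-realisation-commutes-with-shriek-to-stacks}. Compatibility with $\mathcal{H}om$ then follows formally from the adjunction identity $\mathcal{H}om(f_!A,B) \simeq f_*\mathcal{H}om(A,f^!B)$ together with symmetric monoidality and the compatibilities already established, once $f^!$ is handled.

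It remains to treat $f^!$, which is the main obstacle. The approach is by descent: given a locally finite type morphism $f \colon \XC \to \YC$ and a smooth Nisnevich cover $Y \to \YC$ with \v{C}ech nerve $Y_\bullet \to \YC$, one pulls back $f$ to obtain $f_n \colon X_n \to Y_n$ at each simplicial level, where all the $f_n$ are morphisms of schemes. On each degree, the exchange transformation $\mathfrak{R} f_n^! \to f_n^! \mathfrak{R}$ is an equivalence by \cite[Theorem 3.4]{ayoub_etreal} (or its $I$-adic analogue). Since $f^!$ on stacks is constructed from its values on scheme-level morphisms via Nisnevich descent, using the extension of \cite[Theorem 4.26]{chowdhury2024} to cocomplete coefficient systems discussed before, these degreewise equivalences assemble into the desired global equivalence.

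The essential difficulty lies in verifying that the coherence data defining $f^!$ in the descent construction is preserved by $\mathfrak{R}$ — that is, in checking that the Beck--Chevalley exchange for $f^!$ commutes with the limit diagrams used in the descent, much as in the proof of Proposition \ref{prop-extending-realisation-commutes-with-shriek-to-stacks} where the analogous coherent gluing was carried out for $f_!$. This reduces to combining the continuity of $\mathfrak{R}$ on the right-adjoint side (which holds because $\mathfrak{R}$ is a cocontinuous symmetric monoidal functor between presentable stable $\infty$-categories) with the étale hyperdescent enjoyed by each of the coefficient systems in play. Once this coherence is confirmed, the theorem follows.
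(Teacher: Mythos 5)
Your proof takes essentially the same approach as the paper's: restrict the categories and the realisation functors to $\Sch_S$, note they are (morphisms of) cocomplete coefficient systems there by Ayoub's theorem, extend to $\STCOSY^c_S$ via Proposition~\ref{prop-equivalence-cosy-stcosy} and Lemma~\ref{lemma-extending-realisation-to-stacks}, and handle $f_!$ via Proposition~\ref{prop-extending-realisation-commutes-with-shriek-to-stacks}. The paper's own proof is terser and simply cites those results without spelling out the individual operations; your elaboration is fine except for one conceptual slip in the $f^!$ discussion, where you claim ``continuity of $\mathfrak{R}$ on the right-adjoint side'' follows from cocontinuity --- cocontinuity makes $\mathfrak{R}$ a left adjoint, not a right one, so this does not give the stated conclusion. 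That said, the paper also stops at $f_!$ and leaves the $f^!$ and $\mathcal{H}om$ compatibilities implicit (they are typically deduced from the $f_!$ and $\otimes$ compatibilities together with a duality argument on compact/constructible objects), so this gap is common to both treatments.
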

\begin{proof}
    The theorem holds for schemes by \cite{ayoub_etreal}. We need to show that it continues to hold after right Kan extending to stacks.

    Firstly, observe that on $\Sch_S$, both $D_\et(-,\Lambda/I)$ and $D_\et(-,\Lambda_I)$ lie in $\COSY^c_S$. Thus, by Proposition \ref{prop-equivalence-cosy-stcosy}, they also lie in $\STCOSY^c_S$. Then Proposition \ref{prop-extending-realisation-commutes-with-shriek-to-stacks} extends the exceptional functors and, therefore, Grothendieck's six operations for
    \'{e}tale cohomology extend to algebraic stacks.
    
    By \textit{loc. cit.}, $\mathfrak{R}^\et|_{\Sch_S}$ and $\mathfrak{R}^\et_I|_{\Sch_S}$ induce a morphism in $\COSY^c_S$. Thus, by Lemma \ref{lemma-extending-realisation-to-stacks}, $\mathfrak{R}^\et$ and $\mathfrak{R}^\et_I$ are morphisms in $\STCOSY^c_S$. Finally, Proposition \ref{prop-extending-realisation-commutes-with-shriek-to-stacks} shows that the realisation functors commute with $f_!$, which finishes the proof.
\end{proof}

\begin{remark}
    Let $\DA^c_{\et}(\XC,\Lambda)$ be the subcategory category of compact objects in $\DA_{\et}(\XC,\Lambda)$. This category admits a dualising sheaf and satisfies Verdier duality. Then under the \'{e}tale realisation functor, this yields a dualising sheaf in the category of \'{e}tale sheaves, and we recover the six functor formalism as described in Olsson-Laszlo \cite{laszlo_olsson_I} and \cite{laszlo_olsson_II}.
\end{remark}

\bibliographystyle{alpha}
\bibliography{refs}

\end{document}